\def\R{\mathbb R}
\def\N{\mathbb N}
\def\e{\varepsilon}
\def\trait (#1) (#2) (#3){\vrule width #1pt height #2pt depth #3pt}
\def\fin{\hfill\trait (0.1) (5) (0) \trait (5) (0.1) (0) \kern-5pt
\trait (5) (5) (-4.9) \trait (0.1) (5) (0)}
\numberwithin{equation}{section}
\newcommand{\be}{\begin{equation}}
\newcommand{\ee}{\end{equation}}
\newcommand{\baa}{\begin{array}}
\newcommand{\eaa}{\end{array}}
\newcommand{\ba}{\begin{eqnarray}}
\newcommand{\ea}{\end{eqnarray}}
\newcommand{\ban}{\begin{eqnarray*}}
\newcommand{\ean}{\end{eqnarray*}}
\newtheorem{theo}{\bf Theorem}[section]
\newtheorem{lem}[theo]{\bf Lemma}
\newtheorem{pro}[theo]{\bf Proposition}
\newtheorem{cor}[theo]{\bf Corollary}
\newtheorem{defi}[theo]{\bf Definition}
\newtheorem{rem}[theo]{\bf Remark}
\def\ds{\rightarrow}
\def\e{\varepsilon}
\def\Ac{{\cal A}}
\def\Tc{{\cal T}}
\def\VF{\mathbf{U}}
\def\VFp{\mathbf{U}^+}
\def\VFm{\mathbf{U}^-}
\def\sb{{\bar s}}
\def\apg{\left\{}
\def\chg{\right\}}
\def\ch{\right.}
\def\a{a}
\def\aluno{\alpha_1}
\def\aldue{\alpha_2}
\def\A{{\cal A}}
\def\Ta{{\cal T}_{x_0}}
\def\Treg{{\cal T}^{\rm reg}_{x_0}}
\def\HTreg{{ H}^{\rm reg}_T}
\def\Aoreg{A_0^{\rm reg}(x_0)}
\def\Aoregx{A_0^{\rm reg}(x)}
\def\eps{\varepsilon}
\renewcommand{\H}{\mathcal{H}}
\def\Euno{\mathcal{E}_1}
\def\Edue{\mathcal{E}_2}
\def\Eh{\mathcal{E}_\H}
\newcommand{\mB}{\mathcal{B}}
\newcommand{\cob}{\overline{\mathop{\rm co}}}
\def\ue{u_\eps}
\def\xe{x_\eps}
\def\vpe{\varphi_\eps}
\newcommand{\trajxo}{X_{x_0}}
\newcommand{\trajxoN}{X_{x_0,N}}
\def\dottrajxo{\dot{X}_{x_0}}
\def\trajyali{Y_{x_0}^{i}}
\def\tiltrajyaluno{\tilde Y_{x_0}^{1}}
\def\trajyaluno{Y_{x_0}^{1}}
\def\dottrajyali{{\dot Y}_{x_0}^{i}}
\def\trajyxki{Y_{x_k}^{i}}
\def\trajyxei{Y_{x_\eps}^{i}}
\def\trajyxbari{Y_{\bar{x}}^{i}}
\def\HT{H_T}
\begin{document}

\title{\bf A Bellman approach for two-domains optimal control problems in $\R^N$}
\author{G.Barles, A. Briani, E. Chasseigne \thanks{Laboratoire de Math\'ematiques et Physique Th\'eorique (UMR CNRS 7350), F\'ed\'eration Denis Poisson (FR CNRS 2964),
  Universit\'e Fran\c{c}ois Rabelais, Parc de Grandmont,
  37200 Tours, France. Email: Guy.Barles@lmpt.univ-tours.fr, Ariela.Briani@lmpt.univ-tours.fr, Emmanuel.Chasseigne@lmpt.univ-tours.fr . }
  \thanks{This work was partially supported by the EU under the 7th Framework Programme Marie Curie
Initial Training Network ``FP7-PEOPLE-2010-ITN'', SADCO project, GA number 264735-SADCO.}
  }
\maketitle

\begin{abstract}
This article is the starting point of a series of works whose aim is the study of deterministic control problems where the dynamic and the running cost can be completely different in two (or more) complementary domains of the space $\R^N$. As a consequence, the dynamic and running cost present discontinuities at the boundary of these domains and this is the main difficulty of this type of problems. We address these questions by using a Bellman approach: our aim is to investigate how to define properly the value function(s), to deduce what is (are) the right Bellman Equation(s) associated to this problem (in particular what are the conditions on the set where the dynamic and running cost are discontinuous) and  to study the uniqueness properties for this Bellman equation. In this work, we provide rather complete answers to these questions in the case of a simple geometry, namely when we only consider two different domains which are half spaces: we properly define the control problem, identify the different conditions on the hyperplane where the dynamic and the running cost are discontinuous and discuss the uniqueness properties of the Bellman problem by either providing explicitly the minimal and maximal solution or by giving explicit conditions to have uniqueness.
\end{abstract}

 \noindent {\bf Key-words}: Optimal control, discontinuous dynamic, Bellman Equation, viscosity solutions.
\\
{\bf AMS Class. No}:
49L20,   
49L25,   
35F21. 

\section{Introduction}

In this paper, we consider infinite horizon control problems where we have different dynamics and running costs in the half-spaces $\Omega_1:=\{  x \in \R^N  \: : \: x_N >0 \}$ and $\Omega_2:=\{  x \in \R^N  \: : \: x_N  < 0 \}$.

On each domain $\Omega_i$ ($i=1,2$), we have a controlled dynamic given by $b_i : \overline{\Omega}_i \times A_i \to \R^N$, where $A_i$ is the compact metric space where the control takes its values and a running cost $l_i : \overline{\Omega}_i \times A_i \to \R$.  We assume that these dynamics and running costs satisfy standard assumptions: the functions $b_i(\cdot,\alpha_i), l_i(\cdot,\alpha_i)$ are continuous and uniformly bounded and the $b_i(\cdot,\alpha_i)$ are equi-Lipschitz continuous. To simplify the exposure, we also suppose that the system is controllable on both sides.

The first difficulty is to define the controlled dynamic and in particular for trajectories which may stay for a while on the hyperplane $\H:= \overline{\Omega}_1 \cap \overline{\Omega}_2 = \apg  x \in \R^N  \: : \: x_N=0 \chg$. To do so, we follow the pioneering work of Filippov~\cite{Fi} and use the approach through differential inclusions. As a consequence, we see that in particular there exist trajectories which stay on $\H$ at least for a while. Such trajectories are build through a dynamic of the form
$$b_\H(x,(\alpha_1, \alpha_2, \mu)):=\mu b_1 (x,\alpha_1) + (1-\mu)b_2(x,\alpha_2)\,,$$
for $x\in \H$, with $\mu \in [0,1]$, $\alpha_i \in A_i$ and $b_\H(x,(\alpha_1, \alpha_2, \mu))\cdot e_N = 0$ where $e_N:=(0,\cdots,0,1)$. We denote by $A_0(x)$ the set of such controls $\a:=(\alpha_1, \alpha_2, \mu)$. The associated cost is
$$l_\H(x,\a)=l_\H(x,(\alpha_1, \alpha_2, \mu)):=\mu l_1 (x,\alpha_1) + (1-\mu)l_2(x,\alpha_2)\,.$$
Once this is done, we can define value-functions and look for the natural Bellman problem(s) which are satisfied by these value functions. Actually we are going to define two value functions, we come back on this point later on.

It is well-known that, for classical infinite horizon problems, i.e. here in $\Omega_1$ and $\Omega_2$, the equations can be written as
\begin{equation}\label{Bellman-Om}
\begin{array}{cc}
H_1(x,u,Du) = 0     &   \hbox{ in   }\Omega_1\,, \\
H_2(x,u,Du) = 0     &     \hbox{ in   }\Omega_2\,,
\end{array}
\end{equation}
where $H_1, H_2$ are the classical  Hamiltonians
\be  \label{def:Ham}
H_i(x,u,p):=\sup_{ \alpha_i \in A_i} \apg  -b_i(x,\alpha_i) \cdot p+ \lambda u - l_i(x,\alpha_i) \chg\,,
\ee
where $\lambda >0$ is the actualization factor.
From viscosity solutions' theory, 
 it is natural to think that we have to complement these equations by
\begin{equation}\label{Bellman-H-sub}
\min\{ H_1(x,u,Du),  H_2(x,u,Du)\}\leq 0  \quad\hbox{on   } \H \; ,
\end{equation}
\begin{equation}\label{Bellman-H-sup}
 \max\{ H_1(x,u,Du),  H_2(x,u,Du)\}\geq 0   \quad\hbox{on   } \H \; .
\end{equation}
This is actually true since the two value functions we introduce naturally satisfy such inequalities. Note that, for the sake of simplicity, we always say a {\it sub and supersolution of   \eqref{Bellman-Om}-\eqref{Bellman-H-sub}-\eqref{Bellman-H-sup}},  while it has to be understood that both verify \eqref{Bellman-Om} in $\Omega_1$ and $\Omega_2$, but a subsolution only satisfies \eqref{Bellman-H-sub} on $\H$ while a supersolution only satisfies
\eqref{Bellman-H-sup} on $\H$.

The main interesting questions are  then
\begin{itemize}
\item[1)] Does  problem (\ref{Bellman-Om})-(\ref{Bellman-H-sub})-(\ref{Bellman-H-sup})  have a unique solution~?
\item[2)]  Do the value functions satisfy other properties on $\H$ ?
\item[3)]  Do these extra properties allow to characterize each of the value function either as the unique solution of a Bellman problem or at least as the minimal supersolution or the maximal subsolution of them?
\end{itemize}
Our results give complete answers to the above questions.

Concerning Question 1), the answer is no in general. We do not have uniqueness for the problem (\ref{Bellman-Om})-(\ref{Bellman-H-sub})-(\ref{Bellman-H-sup}) but we can identify the maximal subsolution (and solution) and the minimal supersolution (and solution) of (\ref{Bellman-Om})-(\ref{Bellman-H-sub})-(\ref{Bellman-H-sup}): they are value functions of suitable control problems which we are going to define now. The difference between the two is related to the possibility of accepting or rejecting some strategies on $\H$. To be more precise, we call \textit{singular} a dynamic $b_\H(x,\a)$ on $\H$ (i.e. such that $b_\H(x,\a) \cdot e_N=0$ ) when $b_1 (x,\alpha_1)\cdot e_N > 0$ and $b_2(x,\alpha_2)\cdot e_N < 0$ while the \textit{non-singular} (or \textit{regular}) ones are those for which the $b_i (x,\alpha_i)\cdot e_N$ have the opposite (may be non strict) signs. Then, the minimal solution $\VFm$ is obtained when allowing all kind of controlled strategies (with singular and regular dynamics) while the maximal solution $\VFp$ is obtained by forbidding singular dynamics. The uniqueness problem comes from the fact that, in some sense, the singular strategies are not encoded in the equations (\ref{Bellman-Om})-(\ref{Bellman-H-sub})-(\ref{Bellman-H-sup}), while it is the case for the regular ones.

For Question 2), the answer is the following: if we allow any kind of controlled strategies, both the regular and the singular ones, the associated value function, namely $\VFm$, also satisfies the inequality
\begin{equation}\label{Bellman-H}
\HT (x,u,D_\H u) \leq 0   \quad\hbox{on   }\H\; ,
\end{equation}
where $D_\H u:=(\frac{\partial u}{\partial x_1}, \cdots, \frac{\partial u}{\partial x_{n-1}})$ is the gradient of $u$ with respect to the $\H$-variables $x_1,\cdots,x_{n-1}$ and, for $x\in \H, u\in \R, p' \in \R^{N-1}$, $ \HT(x,u,p')$ is given by
$$
\sup_{A_0(x)}   \{   -b_\H (x,\a)\cdot (p',0)
+  \lambda u - l_\H (x,\a)  \}\,.
$$
We emphasize the fact that this viscosity inequality is actually a $\R^{N-1}$ viscosity inequality (meaning that we are considering maximum points relatively to $\H$ and not to $\R^N$); it reflects the suboptimality of the controlled trajectories which stay on $\H$. This inequality makes a difference between $\VFm$ and $\VFp$ since $\VFp$ satisfies the same inequality but with $A_0(x)$ being replaced by $A_0^{\rm reg}(x)$ consisting in elements of $A_0(x)$ satisfying $b_1 (x,\alpha_1)\cdot e_N \leq 0$ and $b_2 (x,\alpha_2)\cdot e_N \geq 0$.

For Question 3), (\ref{Bellman-H}) also makes a difference since there exists a unique solution of (\ref{Bellman-Om})-(\ref{Bellman-H-sub})-(\ref{Bellman-H-sup})-(\ref{Bellman-H}). In other words, the uniqueness gap for (\ref{Bellman-Om})-(\ref{Bellman-H-sub})-(\ref{Bellman-H-sup}) just comes from the fact that a subsolution of (\ref{Bellman-Om})-(\ref{Bellman-H-sub})-(\ref{Bellman-H-sup}) does not necessarily satisfy (\ref{Bellman-H}) and this is due to the difficulty to take into account (at the equation level) some singular strategies. We illustrate this fact by an explicit example in dimension $1$.

Besides of the answers to these three questions, we provide the complete structure of solutions in $1$-D and we also study the convergence of natural approximations.

We end by remarking that there are rather few articles on the same topic, at least if we insist on having such a structure with a general discontinuous dynamic. A pioneering work is the one of Dupuis \cite{Du} that considers a similar method to construct a numerical method for a calculus of variation problem with discontinuous integrand. The work of Bressan and Hong \cite{BrYu} goes in the same direction by studying an optimal control problem on stratified domains. Problems with a discontinuous running cost were addressed by either Garavello and Soravia \cite{GS1,GS2}, or Camilli and Siconolfi   \cite{CaSo} (even in an $L^\infty$-framework) and Soravia \cite{So}. To the best of our knowledge, all the uniqueness results use a special structure of the discontinuities as in \cite{DeZS,DE,GGR} or an hyperbolic approach as in \cite{AMV,CR}. We finally remark that problems on network (see \cite{IMZ},\cite{ACCT}, \cite{ScCa}) share the same kind of difficulties.

The paper is organized as follows: in Section~\ref{sec:control}, we show how to define the dynamic and cost of the control problem in a proper way, we introduce two different value functions ($\VFm$ and  $\VFp$) and, in Theorem~\ref{teo:HJ}, we show that they are solutions of (\ref{Bellman-Om})-(\ref{Bellman-H-sub})-(\ref{Bellman-H-sup}). In addition, we prove that $\VFm$ satisfies the subsolution inequality (\ref{Bellman-H}) while $\VFp$ satisfies a less restrictive inequality, associated to the Hamiltonian involving only {\em regular} controls $\HTreg \leq \HT$. Section~\ref{sec:Prop-sub-super} is devoted to study the properties of {\em any} sub and supersolution of (\ref{Bellman-H-sub})-(\ref{Bellman-H-sup})-(\ref{Bellman-H}) and, in particular, the additional inequalities that they satisfy on $\H$ (inequalities which are connected to $ \HT$ or $\HTreg$). In Section~\ref{sec:uni}, we use these properties to provide a comparison result for (\ref{Bellman-Om})-(\ref{Bellman-H-sub})-(\ref{Bellman-H-sup})-(\ref{Bellman-H}) (Theorem~\ref{Uni-dim-N}); one of the main consequences of this result is that $\VFm$ is the minimal supersolution and solution of (\ref{Bellman-Om})-(\ref{Bellman-H-sub})-(\ref{Bellman-H-sup}), while $\VFp$ is the maximal subsolution and solution of (\ref{Bellman-Om})-(\ref{Bellman-H-sub})-(\ref{Bellman-H-sup}) (cf. Corollary~\ref{sous-max-sur-min}). In Section~\ref{sect:oneD}, we study in details the case of the dimension~$1$ by providing the complete structure of the solutions, together with examples of different behaviors.  Finally Section~\ref{sec:approx} is devoted to examine the effect of several approximations (Filippov and vanishing viscosity).

\section{A control  problem}\label{sec:control}

The aim of this section is to give a sense to infinite horizon control problems which have different dynamic and cost in $\Omega_1:=\{  x \in \R^N  \: : \: x_N >0 \}$ and in $\Omega_2:=\{  x \in \R^N  \: : \: x_N  < 0 \}$. Of course, the difficulty is to understand how to define the problem on $\H:= \overline{\Omega}_1 \cap \overline{\Omega}_2 = \apg  x \in \R^N  \: : \: x_N=0 \chg$.

We first describe the assumptions on the dynamic and cost in each $\Omega_i\ (i=1,2)$. On $\Omega_i$, the sets of controls are denoted by $A_i$, the system is driven by a dynamic $b_i$ and the cost is given by $l_i$.

Our main assumptions are the following

\begin{itemize}

\item[{[H0]}]  For $i=1,2$, $A_i$ is a compact metric space and $b_i : \R^N \times A_i  \ds \R^N$ is a continuous bounded function. Moreover there exists $L_i \in \R$ such that, for any $x,y \in \R^N$ and $\alpha_i \in A_i$
$$ |b_i(x,\alpha_i)-b_i(y,\alpha_i)|\leq L_i |x-y|\; .$$

\item[{[H1]}]  For $i=1,2$, the function $l_i : \R^N \times A_i  \ds \R^N$ is a continuous, bounded function.  \\

\item[{[H2]}] For each $x \in \R^N$, the sets $\apg   (b_i(x, \alpha_i),l_i(x, \alpha_i))  \:  :     \:   \alpha_i \in A_i \chg$,  ($i=1,2$), are closed and convex. Moreover there is a $\delta>0$ such that for any $i=1,2$ and $x\in\R^N$, 
\begin{equation}\label{cont-ass}
\overline{B(0,\delta)} \subset B_i(x):= \apg   b_i(x, \alpha_i)  \:  :     \:   \alpha_i \in A_i \chg.
\end{equation}
\end{itemize}

Assumptions [H0], [H1] are the classical hypotheses used in infinite horizon control problems. We have strengthened them in [H2] in order to keep concentrated in the main issues of the problem.  Indeed,  the first part of assumption [H2] avoids the use of relaxed controls, while the second part is a controllability assumption which will lead us to Lipschitz continuous value functions. In a forthcoming work, we are going to weaken [H2] by assuming only some kind of controlability in the normal direction: this weaker assumption is inspired from \cite{BP3} where it is used to obtain comparison results for {\em discontinuous} sub and super-solutions of exit time-Dirichlet problems without assuming the ``cone's condition'' of Soner \cite{Son}. In our framework, 
 $\H$ plays a similar role as the boundary of the domain in exit time problems since one of the main question is how the trajectories of the dynamics reach $\H$ and how the value function behaves on $\H$. In \cite{So}, Soravia uses a transversality condition which looks like the ``cone's condition'' of Soner \cite{Son} to prove comparison results while [H2] or its weaker version are more related to the Barles-Perthame approach \cite{BP3} (See also \cite{Ba}).

In order to define  the optimal control problem in all $\R^N$, we first have to define the dynamic and therefore we are led to consider an ordinary differential equation with discontinuous right-hand side. This kind of ode has been treated for the first time in the pioneering work of Filippov~\cite{Fi}. We are going to define the trajectories of our optimal control problem by using the approach through differential inclusions which is rather convenient here. This  approach has been introduced in \cite{Wa} (see also \cite{AF}) and has become now classical.  To do so in a more general setting, and since the controllability condition (\ref{cont-ass}) plays no role in the definition of the dynamic, we are going to use Assumption [H2]$_{nc}$ which is [H2] without (\ref{cont-ass}).

Our trajectories $\trajxo(\cdot)=\big(X_{x_0,1},X_{x_0,2},\dots,\trajxoN\big)(\cdot)$ are Lipschitz continuous functions which are solutions  of the following differential inclusion
\begin{equation} \label{def:traj}
\dottrajxo (t) \in \mB(\trajxo (t))  \quad \hbox{for a.e.  } t \in (0,+\infty)  \: ; \quad \trajxo (0)=x_0
\end{equation}
where
\[
\mB(x):= \apg
\begin{array}{cc}
  B_1(x)   &   \mbox{  if }  x_N > 0\,,    \\
  B_2(x) &    \mbox{  if }   x _N< 0\,,   \\
   \cob \big( B_1(x) \cup B_2(x) \big)  &        \mbox{  if }  x_N=0\,,
\end{array}
\ch
\]
the notation $\cob(E)$ referring to the convex closure of the set $E\subset\R^N$.
We point out that if the definition of $\mB(x)$ is natural if either $x_N > 0$ or $x_N < 0$, it is dictated by the assumptions to obtain the existence of a solution to (\ref{def:traj}) for $x_N = 0$ (see below).

In the sequel, we use the set $A:= A_1 \times A_2 \times [0,1]$ where the control function really takes values and we set $\Ac := L^\infty (0,+\infty; A)$.
We have the following
\begin{theo}\label{def:dyn}
Assume [H0], [H1] and [H2]$_{nc}$. Then

\noindent $(i)$ For each $x_0 \in \R^N$, there exists a Lipschitz function $\trajxo : [0,\infty[ \ds \R^N$
which is a solution of  the differential inclusion  \eqref{def:traj}.

\noindent $(ii)$ For each solution  $\trajxo(\cdot)$ of   \eqref{def:traj},  there exists a control
$\a(\cdot)=\big(\alpha_1(\cdot),\alpha_2(\cdot),\mu(\cdot)\big) \in \Ac$ such that
\begin{equation}\begin{aligned}\label{fond:traj}
\dottrajxo (t) &= b_1\big(\trajxo (t),\alpha_1(t)\big)\mathds{1}_{\apg \trajxo (t) \in \Omega_1 \chg }+
b_2\big(\trajxo (t),\alpha_2(t)\big)\mathds{1}_{\apg  \trajxo (t) \in \Omega_2 \chg } \\[2mm]
&+ b_\H\big(\trajxo (t) , \a(t)\big)\mathds{1}_{\apg  \trajxo (t) \in \H \chg }\,  \quad   \hbox{ for a.e. }   \: t  \in \R^+ \,  ,
\end{aligned}\end{equation}
(where $\mathds{1}_{A}(\cdot)$ stands for the indicator function of the set $A$.)

\noindent $(iii)$ If $e_N = (0,\cdots,0,1)$, then
$$b_\H\big(\trajxo (t) ,\a(t)\big)\cdot e_N = 0 \quad \hbox{a.e. on  }\{\trajxoN(t) = 0\}\; .$$
\end{theo}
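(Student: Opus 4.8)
The plan is to prove the three assertions in order, relying on classical tools from the theory of differential inclusions and one elementary real-analysis fact. For $(i)$, I would verify that the set-valued map $x\mapsto \mB(x)$ meets the hypotheses of the standard existence theorem (see \cite{Fi,Wa,AF}): nonempty compact convex values, local boundedness, and upper semicontinuity. Boundedness is immediate from [H0]; the values are compact (continuous images of the compact $A_i$ under $b_i$) and convex by the first part of [H2]$_{nc}$, and $\cob\big(B_1(x)\cup B_2(x)\big)$ is compact and convex by construction. The only delicate point is upper semicontinuity across $\H$: as one approaches $x\in\H$ from $\Omega_1$ (resp. $\Omega_2$), the graph of $\mB$ accumulates on $B_1(x)$ (resp. $B_2(x)$), and both sets are contained in $\cob\big(B_1(x)\cup B_2(x)\big)=\mB(x)$. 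This is precisely why the convexified value on $\H$ is \emph{dictated} by the requirement of a closed graph. Upper semicontinuity then follows from the continuity of the $b_i$, and the existence theorem yields an absolutely continuous solution on $[0,\infty)$; uniform boundedness of $\mB$ upgrades it to a Lipschitz function.

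For $(ii)$, the key algebraic observation is that, since $B_1(x)$ and $B_2(x)$ are convex and compact,
$$\cob\big(B_1(x)\cup B_2(x)\big)=\apg \mu\, p_1+(1-\mu)\, p_2 \ : \ p_1\in B_1(x),\ p_2\in B_2(x),\ \mu\in[0,1]\chg,$$
the right-hand side being convex, containing $B_1(x)\cup B_2(x)$, and compact (hence closed) as a continuous image of $B_1(x)\times B_2(x)\times[0,1]$. Given a solution $\trajxo$, the time axis splits into the three measurable sets $\{t:\trajxoN(t)>0\}$, $\{t:\trajxoN(t)<0\}$ (open) and $E:=\{t:\trajxoN(t)=0\}$ (closed). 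On the first $\dottrajxo(t)\in B_1(\trajxo(t))$, on the second $\dottrajxo(t)\in B_2(\trajxo(t))$, and on $E$ the decomposition above applies. On each piece I would invoke Filippov's measurable selection lemma to extract measurable controls $\alpha_1(\cdot),\alpha_2(\cdot),\mu(\cdot)$ realizing the prescribed velocities, completing each selection with fixed arbitrary values of the inactive components (for instance a constant $\alpha_2$ and $\mu=1$ on $\Omega_1$). Gluing these selections produces $\a(\cdot)=(\alpha_1,\alpha_2,\mu)\in\Ac$ satisfying \eqref{fond:traj}.

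Part $(iii)$ is then a short argument combining $(ii)$ with a level-set fact. On $E$ the Lipschitz function $\trajxoN$ is identically zero, so $\tfrac{d}{dt}\trajxoN(t)=0$ for a.e. $t\in E$: almost every point of $E$ is a Lebesgue density point, and at such a point (where the derivative exists) one may approach along a sequence in $E$, forcing the difference quotients, and hence the derivative, to vanish. Since $E=\{t:\trajxo(t)\in\H\}$, identity \eqref{fond:traj} gives $b_\H\big(\trajxo(t),\a(t)\big)=\dottrajxo(t)$ for a.e. $t\in E$, whence
$$b_\H\big(\trajxo(t),\a(t)\big)\cdot e_N=\dottrajxo(t)\cdot e_N=\tfrac{d}{dt}\trajxoN(t)=0\quad\text{a.e. on }E.$$

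I expect the main obstacle to be part $(ii)$: guaranteeing that the controls can be chosen \emph{measurably} (and essentially bounded) across the interface, which requires the explicit convex-hull representation above together with a careful application of the measurable selection lemma on each of the three time-sets before gluing. The upper semicontinuity check at $\H$ in $(i)$ is the other point demanding care, whereas $(iii)$ is essentially automatic once $(ii)$ is in hand.
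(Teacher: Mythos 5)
Your proposal is correct and follows essentially the same route as the paper: existence via the standard theorem for upper semicontinuous differential inclusions with convex compact values, a measurable selection via Filippov's lemma for $(ii)$ (the paper packages the three regimes into a single Carath\'eodory map $g(t,a)$ and applies the lemma once, whereas you apply it on each of the three time-sets and glue, which is the same argument with different bookkeeping), and the vanishing of $\dot{X}_{x_0,N}$ a.e.\ on the level set $\{X_{x_0,N}=0\}$ for $(iii)$, which the paper attributes to Stampacchia and you prove directly via density points. The explicit identification of $\cob\big(B_1(x)\cup B_2(x)\big)$ with the set of combinations $\mu p_1+(1-\mu)p_2$ is a useful point you make explicit that the paper leaves implicit.
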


\begin{proof} This result follows from two classical results  in \cite{AF}. \\

\noindent\textsc{Step 1 --}
Since the set-valued map $\mB$ is upper semi-continuous with convex compact images, thanks to \cite[Theorem 10.1.3]{AF}, we have that, for each $x_0 \in \R^N$, there exists an absolutely continuous solution $\trajxo(\cdot)$, of the differential inclusion
 \eqref{def:traj}, i.e.
 \[
\dottrajxo(t) \in \mB(\trajxo (t))  \mbox{  for a.e.  }  t \in (0,\infty)\,;    \quad \trajxo(0)=x_0\,.
 \]
Note that the solution is defined in  all $\R^ +$ and Lipschitz continuous, thanks to the boundedness of $\mB$.
This first step justifies the definition of $\mB$ for $x_N=0$.\\

\noindent\textsc{Step 2 --} The next step consists in applying Filippov's Lemma (cf. \cite[Theorem 8.2.10]{AF}).
To do so, we define the map $g:\R^+ \times A \rightarrow \R^N$ as follows
$$
g(t,\a):=
\apg
\begin{array}{cc}
    b_1\big(\trajxo (t) ,\alpha_1\big)  &   \mbox{  if }  \trajxoN(t) >0    \\
    b_2\big(\trajxo (t) ,\alpha_2\big) &    \mbox{  if }  \trajxoN(t)< 0   \\
    b_\H\big(\trajxo (t) ,a \big)   &        \mbox{  if }  \trajxoN(t)=0\,,
 \end{array}
\ch
$$
where $a=(\alpha_1,\alpha_2,\mu)$. We point out that we use here the general definition of $ b_\H$, {\em without assuming that $b_\H\big(\trajxo (t) ,a \big)\cdot e_N=0$}.

%
We claim that $g$ is a Caratheodory map. Indeed, it is first clear that, for fixed $t$, the function $a\mapsto g(t,a)$
is continuous. Then, to check that $g$ is measurable with respect to its first argument we fix $a\in A$,
an open set $\mathcal{O}\subset\R^N$ and evaluate
$$ g^{-1}_a(\mathcal{O})=\big\{ t>0: g(t,a)\cap\mathcal{O}\neq\emptyset\big\}
$$
that we split into three components, the first one being
$$ g^{-1}_a(\mathcal{O})\cap \{t:\ \trajxoN(t) < 0\}= \big\{ t>0: b_1(\trajxo (t) ,\alpha_1)\in\mathcal{O}\big\}\cap \{t:\ \trajxoN(t) < 0\}\,. $$
Since the function $t\mapsto b_1(\trajxo (t),\alpha_1)$ is continuous, this set is the intersection of open sets, hence it is open and therefore measurable. The same argument works for the other components, namely $\{t:\ \trajxoN(t) <0\}$ and $\{t:\ \trajxoN(t)= 0\}$ which finishes the claim.

The function $t\mapsto\dottrajxo(t)$ is measurable and, for any $t$, the differential inclusion implies that
$$\dottrajxo (t) \in g(t,A)\; ,$$
therefore, by Filippov's Lemma, there exists a measurable map $a(\cdot)=(\aluno,\aldue,\mu)(\cdot) \in \Ac$ such that \eqref{fond:traj}  is fulfilled.
In particular, by the definition of $g$, we have for a.e. $t \in \R_*^+ $
\be  \label{eq:g}
\dottrajxo (t)= \apg
\begin{array}{cc}
    b_1\big(\trajxo (t) ,\alpha_1(t)\big)  &   \mbox{  if }  \trajxoN(t) >0    \\
    b_2\big(\trajxo (t) ,\alpha_2(t)\big) &    \mbox{  if }  \trajxoN(t)< 0   \\
    b_\H\big(\trajxo (t) ,\a(t)\big)   &  \mbox{  if }  \trajxoN(t)=0.
 \end{array}
\ch
\ee

\noindent\textsc{Step 3 --} The proof of $(iii)$ is an immediate consequence of Stampacchia's Theorem (cf. for example D. Gilbarg and N.S Trudinger \cite{gt}) since, if $y(t):= (\trajxo (t))_N$, then $\dot{y} (t) = 0$ a.e. on the set $\{y(t)=0\}$.
\end{proof}

It is worth remarking that, in Theorem~\ref{def:dyn}, a solution $\trajxo(\cdot) $ can be associated to several controls $a(\cdot)$; indeed in \eqref{fond:traj} or \eqref{eq:g} the associated control is not necessarily unique. To set properly the control problem, without showing that \eqref{eq:g} has a solution for any $a(\cdot)$, we introduce the set $\Tc_{x_0}$ of admissible controlled trajectories starting from the initial datum $x_0$
\begin{equation*}
\Tc_{x_0}:= \big\{  (\trajxo(\cdot),\a(\cdot))\in {\rm Lip}(\R^+;\R^N) \times \A \mbox{ such that  }
\mbox{ \eqref{fond:traj}  is fulfilled and }  \trajxo(0)=x_0   \big\}
\end{equation*}
and we set
$$
  \Euno:= \apg  t  \:   : \: \trajxo (t) \in \Omega_1   \chg,\quad \Edue:= \apg  t  \:   : \: \trajxo (t) \in \Omega_2  \chg,\quad
\Eh:= \apg  t  \:   : \: \trajxo (t) \in \H  \chg\,.
$$
We finally define the set of regular controlled trajectories
$$
\Treg:= \big\{  (\trajxo(\cdot),\a (\cdot)) \in \Ta \mbox{ such that, for almost all }  t\in\Eh,  \: b_\H(\trajxo(t),a(t))   \mbox{ is regular}  \big\}.
$$
Recall that, we call \textit{singular} a dynamic $b_\H(x,\a)$ on $\H$ with $a=(\alpha_1, \alpha_2, \mu)$ when $b_1 (x,\alpha_1)\cdot e_N > 0$ and $b_2(x,\alpha_2)\cdot e_N < 0$, while the \textit{non-singular} (or \textit{regular}) ones are those for which the $b_i (x,\alpha_i)\cdot e_N$ have the opposite (may be non strict) signs.

\noindent {\bf The cost functional.}
Our aim is  to minimize an infinite horizon  cost functional such that we respectively pay $l_{i}$ if the trajectory is in $\Omega_i$, $i=1,2$ and
$l_\H$ if it is on $\H$. \\

More precisely,  the cost associated to $(\trajxo(\cdot) ,\a)  \in \Ta$ is
$$  
J(x_0; (\trajxo, \a)):=\int_0^{+\infty} \ell\big(\trajxo (t),a \big) e^{-\lambda t} dt
$$
where the Lagrangian is given by
$$
\ell(\trajxo (t),a)  :=  l_1(\trajxo (t),\alpha_1(t)) \mathds{1}_{\Euno}(t)
       +  l_2(\trajxo (t),\alpha_2(t)) \mathds{1}_{\Edue }(t)
       +   l_\H(\trajxo (t),\a(t))\mathds{1}_{\Eh }(t)\,.
$$
%
%

\noindent {\bf The value functions. }For each initial data $x_0$, we define the following two value functions
\be \label{eq:valorem}
 \VFm (x_0):= \inf_{(\trajxo,\a) \in \Ta} J(x_0; (\trajxo, \a))
 \ee
\be \label{eq:valorep}
 \VFp (x_0):= \inf_{(\trajxo,\a) \in \Treg} J(x_0; (\trajxo, \a))
 \ee

The first key result is the {\bf Dynamic Programming Principle.}

\begin{theo}
Assume [H0], [H1] and [H2].
Let  $\VFm,\VFp$ be the  value functions defined in  \eqref{eq:valorem} and \eqref{eq:valorep}, respectively.
For each  initial data $x_0$, and each time $ \tau \geq 0$, we have
\begin{equation} 
   \VFm (x_0)= \inf_{  (\trajxo,\a) \in \Ta} \left\{ \int_0^\tau   \ell\big(\trajxo (t),a \big)  e^{-\lambda t} dt
    +  e^{-\lambda \tau}  \VFm (\trajxo(\tau)) \right\}
\end{equation}
\begin{equation} 
   \VFp (x_0)= \inf_{  (\trajxo,\a) \in \Treg} \left\{ \int_0^\tau   \ell\big(\trajxo (t),a \big)  e^{-\lambda t} dt
    +  e^{-\lambda \tau}  \VFp (\trajxo(\tau)) \right\}
\end{equation}
\end{theo}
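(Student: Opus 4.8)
The plan is to prove the dynamic programming principle for each value function by establishing the two customary inequalities, relying crucially on the fact that the dynamics \eqref{fond:traj} and the running costs are autonomous, so that time-translation carries admissible controlled trajectories to admissible ones. I will carry out the argument for $\VFm$ (the infimum over all of $\Ta$); the proof for $\VFp$ is identical once one observes that the regularity constraint defining $\Treg$ is preserved by the two operations used below. Write $W(x_0)$ for the right-hand side of the asserted identity for $\VFm$.

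For the inequality $\VFm(x_0)\ge W(x_0)$, I would fix any $(\trajxo,\a)\in\Ta$ and split its cost at time $\tau$, writing $J(x_0;(\trajxo,\a))$ as the integral over $[0,\tau]$ plus the tail $\int_\tau^{+\infty}\ell\,e^{-\lambda t}\,dt$. In the tail I change variables $s=t-\tau$ and note that the shifted pair $\big(\trajxo(\cdot+\tau),\a(\cdot+\tau)\big)$ is an admissible controlled trajectory issued from $\trajxo(\tau)$---this is exactly where autonomy of \eqref{fond:traj} enters---so the tail equals $e^{-\lambda\tau}J\big(\trajxo(\tau);(\trajxo(\cdot+\tau),\a(\cdot+\tau))\big)\ge e^{-\lambda\tau}\VFm(\trajxo(\tau))$. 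Hence $J(x_0;(\trajxo,\a))\ge \int_0^\tau\ell\,e^{-\lambda t}\,dt+e^{-\lambda\tau}\VFm(\trajxo(\tau))\ge W(x_0)$, and taking the infimum over $(\trajxo,\a)\in\Ta$ gives $\VFm(x_0)\ge W(x_0)$.

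For the reverse inequality, I would argue by $\e$-optimality and concatenation. Given $\e>0$, choose $(\trajxo,\a)\in\Ta$ almost realizing the infimum defining $W(x_0)$ up to $\e$; set $y:=\trajxo(\tau)$ and choose an admissible pair $(\hat X,\hat a)$ issued from $y$ with $J(y;(\hat X,\hat a))\le\VFm(y)+\e$. Gluing $\trajxo$ on $[0,\tau]$ to the time-shifted $\hat X(\cdot-\tau)$ on $[\tau,+\infty)$ (and the controls accordingly) produces a pair $(Z,\cdot)$: continuity at $\tau$ holds since $\hat X(0)=y=\trajxo(\tau)$, the glued control is measurable, and \eqref{fond:traj} is satisfied a.e.\ on each piece, the junction point being negligible. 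A direct computation gives $J(x_0;(Z,\cdot))=\int_0^\tau\ell\,e^{-\lambda t}\,dt+e^{-\lambda\tau}J(y;(\hat X,\hat a))\le W(x_0)+(1+e^{-\lambda\tau})\e$, whence $\VFm(x_0)\le W(x_0)+(1+e^{-\lambda\tau})\e$; letting $\e\to0$ concludes.

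The only genuinely non-routine step---and the one I expect to require care---is checking that these two operations, time-restriction/shift and concatenation, keep us inside $\Ta$, and inside $\Treg$ for the $\VFp$ statement. For $\Ta$ this follows from the autonomous form of \eqref{fond:traj} together with the fact that altering a control on a null set leaves the differential relation unchanged. For $\Treg$ one must in addition verify that $b_\H$ stays regular for a.e.\ $t\in\Eh$ after shifting or gluing; since regularity is a pointwise condition inherited from each piece, both operations preserve it, and the very same two inequalities then yield the principle for $\VFp$.
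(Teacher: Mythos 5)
Your proof is correct and is precisely the standard argument (time-shift for one inequality, concatenation of $\e$-optimal trajectories for the other) that the paper invokes when it states ``The proof is standard, so we skip it.'' You have also correctly identified and handled the only point specific to this setting, namely that both operations preserve membership in $\Ta$ and in $\Treg$, since the a.e.\ regularity condition on $\Eh$ is stable under shifting and gluing.
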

\begin{proof}
The proof is standard, so we skip it.
\end{proof}

Because of our assumption [H2] on $b_1,b_2$, it follows that $\overline{B(0,\delta)} \subset  \mathcal{B}(x)$ for any $x\in \R^N$.
Hence the system is controllable, which means, roughly speaking,
that the set of admissible controls is rich enough  to avoid "forbidden directions" in any point of $\R^N$.

The most important consequence of this is that both value functions  $\VFm$ and   $\VFp$ are Lipschitz continuous.

\begin{theo}
Assume [H0], [H1] and [H2]. Then, the value functions $\VFm$ and $\VFp$  are  bounded, Lipschitz continuous functions
from $ \R^N$ into $\R$.   \\
\end{theo}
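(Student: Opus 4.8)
The plan is to derive boundedness directly from the bound on the costs, and Lipschitz continuity from the controllability assumption [H2] combined with the Dynamic Programming Principle just established.

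Boundedness is immediate. By [H1] there is $M>0$ with $|l_i|\le M$ for $i=1,2$, and since $l_\H(x,\a)=\mu l_1+(1-\mu)l_2$ is a convex combination, $|\ell|\le M$ as well. Hence, for every admissible $(\trajxo,\a)$,
\[
\big|J(x_0;(\trajxo,\a))\big|\le M\int_0^{+\infty}e^{-\lambda t}\,dt=\frac{M}{\lambda},
\]
so that $\|\VFm\|_\infty,\|\VFp\|_\infty\le M/\lambda$; in particular both infima are finite.

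For the Lipschitz estimate, fix $x_0,y_0\in\R^N$, set $\tau:=|x_0-y_0|/\delta$ and $v:=\delta(x_0-y_0)/|x_0-y_0|$, and consider the straight steering trajectory $Y(t):=y_0+tv$, which satisfies $Y(\tau)=x_0$. Since $|v|=\delta$, assumption [H2] gives $v\in\overline{B(0,\delta)}\subset B_i(Y(t))\subset\mB(Y(t))$ for all $t$, so $Y$ solves the differential inclusion \eqref{def:traj}; a measurable control $\a$ realizing $\dot Y=v$ is produced exactly as in Step~2 of Theorem~\ref{def:dyn}, whence $(Y,\a)\in\mathcal{T}_{y_0}$. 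Applying the Dynamic Programming Principle at time $\tau$ and using $Y(\tau)=x_0$,
\[
\VFm(y_0)\le\int_0^\tau\ell(Y(t),\a)e^{-\lambda t}\,dt+e^{-\lambda\tau}\VFm(x_0).
\]
With $|\ell|\le M$, $\int_0^\tau e^{-\lambda t}\,dt\le\tau$, $|e^{-\lambda\tau}-1|\le\lambda\tau$ and $|\VFm(x_0)|\le M/\lambda$, this yields $\VFm(y_0)-\VFm(x_0)\le 2M\tau=(2M/\delta)|x_0-y_0|$. Exchanging $x_0$ and $y_0$ gives the reverse bound, so $\VFm$ is Lipschitz with constant $2M/\delta$.

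For $\VFp$ the same argument applies once we check that the steering trajectory can be taken regular. If $v\cdot e_N\neq0$ the segment crosses $\H$ transversally, so $\Eh$ is negligible and the regularity constraint is vacuous; if $v\cdot e_N=0$ with $x_0,y_0\notin\H$ the segment remains in a single $\Omega_i$; and if $v\cdot e_N=0$ with $x_0,y_0\in\H$, controllability provides $\alpha_1,\alpha_2$ with $b_1(Y,\alpha_1)=b_2(Y,\alpha_2)=v$, hence $b_1\cdot e_N=b_2\cdot e_N=0$, a regular (tangential) dynamic in $\Aoregx$. Thus $(Y,\a)\in\mathcal{T}^{\rm reg}_{y_0}$ and the identical computation shows $\VFp$ is Lipschitz with the same constant. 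The only genuinely delicate point is this admissibility (and, for $\VFp$, regularity) of the steering trajectory; once it is settled via [H2] and the measurable selection of Theorem~\ref{def:dyn}, the remaining estimates are routine discounting bounds.
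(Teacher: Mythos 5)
Your proof is correct and follows essentially the same route as the paper: boundedness from the cost bound $M/\lambda$, and the Lipschitz estimate by steering along the straight segment at speed $\delta$ (admissible by [H2]) together with the Dynamic Programming Principle, yielding the same constant $2M/\delta$. Your additional case analysis verifying that the steering trajectory can be taken in $\mathcal{T}^{\rm reg}_{y_0}$ when handling $\VFp$ addresses a point the paper passes over silently (its proof treats $\VFm$ and $\VFp$ identically without checking regularity of the steering trajectory), so this is a welcome refinement rather than a different approach.
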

\begin{proof} Since the proof is the same for $\VFm$ and $\VFp$, we denote by $\VF$ a function which can be either $\VFm$ or $\VFp$.
 We first notice that, if $M$ is a large enough constant such that $||l_1||_\infty, ||l_2||_\infty \leq M$ (recall that $l_1, l_2$ are bounded), we have
$$|\VF(z)| \leq \frac{M}{\lambda}\quad \hbox{for any  }z \in \R^N\; ,$$
therefore $\VF$ is bounded.

Next let $x, y\in \R^N$ and set $K:= \frac{2M}{\delta}$. We are going to prove that
$$\VF(x) \leq \VF(y) + K|x-y|\; .$$
Of course, if this inequality is true for any $x,y\in \R^N$, it implies the Lipschitz continuity of $\VF$.

To prove it, we assume that $x\neq y$ (otherwise the inequality is obvious) and we set $e:= \frac{y-x}{|y-x|}$. By [H2], since $\overline{B(0,\delta)} \subset \mathcal{B}(z)$ for any $z\in \R^N$, it follows that $\delta e \in \mathcal{B}(z)$ for any $z\in \R^N$ and the trajectory
$$ X_x(t):= x + \delta e \cdot t\; ,$$
is a solution of the differential inclusion with $X_x(0)= x$ and $X_x(\tau) = y$ with $\tau = \frac{|x-y|}{\delta}$.

By the Dynamic Programming Principle
$$
\VF (x) \leq   \int_0^\tau   \ell\big(X_x (t),a \big)  e^{-\lambda t} dt   +  e^{-\lambda \tau}  \VF (y)\; ,
$$
and estimating the cost $\ell\big(X_x (t),a \big)$ by $M$, we obtain
$$ \VF (x)- \VF (y) \leq M \tau +  (1- e^{-\lambda \tau})  ||\VF||_\infty\; .$$
Finally,
$$ \VF (x)- \VF (y) \leq  M \tau + \lambda \tau  ||\VF||_\infty = \frac{2M}{\delta}|x-y|$$
and the proof is complete.

 \end{proof}

\noindent{\bf The Hamilton-Jacobi-Bellman Equation.}
In order to describe what is happening on the hyperplane $\H$, we shall introduce
two "tangential Hamiltonians" defined on $\H$, namely $H_T,\HTreg:\H\times\R\times\R^{N-1}\to\R$.
We introduce some notations to be clear on how they are defined:
the points of $\H$ will be identified indifferently by $x'\in\R^{N-1}$ or by $x=(x',0)\in\R^N$.
Now, for the gradient variable
we use the decomposition $p=(p_\H,p_N)$ and, when dealing with a function $u$, we shall also use the notation $D_\H u$ for
the ${(N-1)}$ first components of the gradient, i.e., $$D_\H u:=(\frac{\partial u}{\partial x_1}, \cdots, \frac{\partial u}{\partial x_{n-1}})\quad\hbox{and}\quad Du=\Big(D_\H u,\frac{\partial u}{\partial x_N}\Big)\,.$$
Note that, for the sake of consistency of notations, we also denote by $D_\H u$  the gradient of a function $u$ which is only defined on $\R^{N-1}$. 
Then, for any $(x,u,p_\H)\in\H\times\R \times\R^{N-1}$ we set
\begin{equation}  \label{def:HamHT}
\HT(x,u,p_\H):=\sup_{A_0(x)}
 \big\{  - b_\H(x,\a)\cdot (p_\H,0)  + \lambda u -  l_\H(x,\a)   \big\}
\end{equation}
where $A_0(x):=\big\{\a=(\aluno,\aldue,\mu) \in A : b_\H(x,\a) \cdot e_N=0\big\}$ and
\begin{equation}  \label{def:HamHTreg}
\HTreg(x,u,p_\H):=\sup_{\Aoregx} \big\{  - b_\H(x,\a)\cdot (p_\H,0)  + \lambda u -  l_\H(x,\a)   \big\}
\end{equation}
where $\Aoregx:=\big\{\a=(\aluno,\aldue,\mu)  \in A_0(x)\:   ; \:
b_1(x,\aluno) \cdot e_N  \leq 0 \mbox{ and }  b_2(x,\aldue) \cdot e_N  \geq 0  \big\}$.

The definition of viscosity sub and super-solutions
for $H_T$ an $\HTreg$ have to be understood on $\H$, as follows:
\begin{defi}
    A bounded usc function $u:\H\to\R$ is a viscosity subsolution of
    $$H_T(x,u,D_\H u)=0\quad \text{on   } \H$$
    if, for any $\phi\in C^1(\R^{N-1})$ and any maximum point $x_0'$ of 
    $x'\mapsto u(x')-\phi(x')$, one has $$H_T\big(x_0,\phi(x_0'),D_\H\phi(x_0')\big)\leq0\; ,$$ with $x_0=(x_0',0)$.
\end{defi}
A similar definition holds for $\HTreg$, for supersolutions and solutions.
Of course, if $u$ is defined in a bigger set containing $\H$ (typically $\R^N$), we have to use
$u|_\H$ in this definition, a notation that we shall sometimes omit when not necessary.

We first prove that both the value functions  $\VFm$ and  $\VFp$ are viscosity solutions of the Hamilton-Jacobi-Bellman problem (\ref{Bellman-Om})-(\ref{Bellman-H-sub})-(\ref{Bellman-H-sup}), while  they
fulfill different  inequalities on the hyperplane $\H$.

\begin{theo}Assume [H0], [H1] and [H2]. \label{teo:HJ}
The value functions $\VFm$ and   $\VFp$ are  both  viscosity solutions of the Hamilton-Jacobi-Bellman problem
\be   \label{eq:HJ}
\apg
\begin{array}{cc}
H_1(x,u,Du) = 0   \quad\hbox{in  } \Omega_1 &    \\
 H_2(x,u,Du)  = 0  \quad\hbox{in  } \Omega_2 &    \\
  \min\big\{ H_1(x,u,Du),  H_2(x,u,Du)\big\}\leq 0 \quad\hbox{on  } \H &\\
   \max\big\{ H_1(x,u,Du),  H_2(x,u,Du)\big\}\geq 0  \quad\hbox{on  } \H. &
\end{array}
\ch
\ee
Moreover, $x'\mapsto \VFm (x',0)$ verifies
\be  \label{eq:HJUm}
  H_T\big(x,u, D_\H u\big)  \leq 0   \quad\hbox{on  } \H \; ,
\ee
while  $x'\mapsto \VFp (x',0)$  satisfies
\be \label{eq:HJUp}
\HTreg\big(x,u, D_\H u\big)  \leq 0  \quad\hbox{on  } \H \; .
  \ee
\end{theo}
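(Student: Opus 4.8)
The whole argument rests on the Dynamic Programming Principle combined with the controllability contained in [H2], exploited through three families of test trajectories issued from a boundary point $x_0\in\H$: those immediately entering $\Omega_1$, those immediately entering $\Omega_2$, and those remaining on $\H$. Below $\VF$ stands for either $\VFm$ or $\VFp$ and we write $p=D\phi(x_0)=(p_\H,p_N)$. The interior equations are classical: for $x_0\in\Omega_1$ any constant control $\alpha_1$ keeps the trajectory in the open set $\Omega_1$ for a short time, so the standard viscosity computation (apply the Dynamic Programming Principle on $[0,\tau]$, compare $u(\trajxo(\tau))$ with $\phi$ through the extremum property, and let $\tau\to0$) gives both viscosity inequalities for $H_1$, and likewise for $H_2$ in $\Omega_2$. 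All the substance is thus on $\H$.

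At a maximum point $x_0\in\H$ of $u-\phi$, using a constant control $\alpha_1$ with $b_1(x_0,\alpha_1)\cdot e_N>0$ (such controls exist by [H2]) produces a trajectory entering $\Omega_1$, and the Dynamic Programming Principle gives, as $\tau\to0$, the one-sided inequality $-b_1(x_0,\alpha_1)\cdot p+\lambda u(x_0)-l_1(x_0,\alpha_1)\le0$ for every such $\alpha_1$; symmetrically $-b_2(x_0,\alpha_2)\cdot p+\lambda u(x_0)-l_2(x_0,\alpha_2)\le0$ whenever $b_2(x_0,\alpha_2)\cdot e_N<0$. For the tangential inequalities, given $\a\in A_0(x_0)$ (resp.\ $A_0^{\rm reg}(x_0)$) one constructs an admissible trajectory staying on $\H$ with initial velocity $b_\H(x_0,\a)$ and cost tending to $l_\H(x_0,\a)$; since it belongs to $\Ta$ (resp.\ $\Treg$), the Dynamic Programming Principle relative to $\H$ gives $-b_\H(x_0,\a)\cdot(p_\H,0)+\lambda u(x_0)-l_\H(x_0,\a)\le0$, and taking the supremum over the admissible controls yields exactly \eqref{eq:HJUm} for $\VFm$ and \eqref{eq:HJUp} for $\VFp$. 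To obtain $\min\{H_1,H_2\}\le0$, assume both $H_1>0$ and $H_2>0$ at $(x_0,u(x_0),p)$; since $A_1,A_2$ are compact the suprema are attained at maximizers $\alpha_1^*,\alpha_2^*$, and the one-sided inequalities force $b_1(x_0,\alpha_1^*)\cdot e_N\le0$ and $b_2(x_0,\alpha_2^*)\cdot e_N\ge0$, i.e.\ these controls are \emph{regular}. Choosing $\mu\in[0,1]$ so that $b_\H:=\mu b_1+(1-\mu)b_2$ is tangent to $\H$ produces an element of $A_0^{\rm reg}(x_0)$ for which the tangential inequality reads $\mu[-b_1(x_0,\alpha_1^*)\cdot p+\lambda u-l_1]+(1-\mu)[-b_2(x_0,\alpha_2^*)\cdot p+\lambda u-l_2]\le0$, the normal contributions cancelling precisely because $b_\H\cdot e_N=0$. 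This contradicts the strict positivity and establishes the subsolution condition on $\H$ for both value functions.

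For the supersolution inequality $\max\{H_1,H_2\}\ge0$, let $x_0\in\H$ be a minimum point of $u-\phi$ and suppose, for contradiction, that $H_1<0$ and $H_2<0$ at $(x_0,u(x_0),p)$. By continuity there are $\theta>0$ and a neighbourhood of $x_0$ where $l_i(x,\alpha_i)+b_i(x,\alpha_i)\cdot D\phi(x)-\lambda\phi(x)\ge\theta$ for $i=1,2$ and all controls; the convex-combination form of $(b_\H,l_\H)$ propagates the same bound to the dynamics on $\H$. Hence, along any $\e$-optimal trajectory, the integrand $\ell(\trajxo,\a)+D\phi(\trajxo)\cdot\dottrajxo-\lambda\phi(\trajxo)$ stays $\ge\theta$ a.e.\ whether $\trajxo$ lies in $\Omega_1$, $\Omega_2$ or on $\H$; inserting this into the Dynamic Programming Principle and integrating $\tfrac{d}{dt}\big(e^{-\lambda t}\phi(\trajxo)\big)$ gives $\tau^2\ge\tfrac{\theta}{2}\tau$ for small $\tau$, a contradiction as $\tau\to0$. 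The same computation applies to $\VFp$, whose $\e$-optimal trajectories are merely further restricted to regular controls, on which the bound still holds.

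The one genuinely delicate step is the construction, needed for \eqref{eq:HJUm}--\eqref{eq:HJUp}, of admissible trajectories that remain on $\H$ while realizing a prescribed initial tangential velocity $b_\H(x_0,\a)$ and the associated cost $l_\H(x_0,\a)$. Freezing the control would let the trajectory leave $\H$ at order $\tau^2$ and thereby switch the running cost from $l_\H$ to $l_i$; instead one keeps $\alpha_1,\alpha_2$ fixed and adjusts $\mu(t)$ continuously so that $b_\H(\trajxo(t),(\alpha_1,\alpha_2,\mu(t)))\cdot e_N=0$ along the motion. Solvability of this relation for $\mu(t)\in[0,1]$ near $x_0$ follows from [H2] (the degenerate cases $b_i\cdot e_N=0$ being handled by $\mu\in\{0,1\}$), and Theorem~\ref{def:dyn} guarantees that the resulting curve is an admissible controlled trajectory. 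Once this selection is available, all the passages to the limit $\tau\to0$ are routine, and I expect this trajectory-on-$\H$ construction to be the main obstacle of the proof.
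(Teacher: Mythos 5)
Your proposal follows the paper's own strategy in all essentials: the interior equations are classical; the one-sided inequalities on $\H$ come from constant controls whose normal component $b_i(x_0,\alpha_i)\cdot e_N$ has a definite sign; the tangential inequalities \eqref{eq:HJUm}--\eqref{eq:HJUp} come from trajectories kept on $\H$ by a state-dependent $\mu(\cdot)$ solving $b_\H(x(t),(\alpha_1,\alpha_2,\mu(t)))\cdot e_N=0$ (the paper's Cases 3--4, via Cauchy--Lipschitz); $\min\{H_1,H_2\}\le0$ then follows by combining these (the paper classifies all sign configurations of the pair $(\alpha_1,\alpha_2)$ into five cases, you argue by contradiction with maximizers --- same content); and $\max\{H_1,H_2\}\ge0$ uses that the $\H$-contribution to the integrand is a convex combination of the $\Omega_1$- and $\Omega_2$-contributions, exactly the paper's bound $H_T^{*}\le\max(H_1,H_2)$ followed by $\tau\to0$.

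The one genuine gap is your treatment of the degenerate controls. When $b_1(x_0,\alpha_1)\cdot e_N=0$ and/or $b_2(x_0,\alpha_2)\cdot e_N=0$ --- and such triples do belong to $A_0^{\rm reg}(x_0)$, so the tangential inequality must be proved for them, and they can also arise as the maximizers in your contradiction argument for $\min\{H_1,H_2\}\le0$ --- the claim that this is ``handled by $\mu\in\{0,1\}$'' does not work: with $\mu\equiv1$ the dynamics is $\dot x=b_1(x,\alpha_1)$, which is only tangent to $\H$ at $x_0$ and in general leaves $\H$ immediately, so no trajectory-on-$\H$ is produced; moreover the explicit formula for $\mu(y)$ need not remain in $[0,1]$ near $x_0$ once a normal component vanishes. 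Note also that [H2] is not what makes the tangency relation solvable --- that is the strict opposite signs of the normal components. What [H2] is actually needed for (the paper's Case 5) is a perturbation argument: controllability gives $\alpha_1^-,\alpha_2^+$ with $b_1(x_0,\alpha_1^-)=-\delta e_N$ and $b_2(x_0,\alpha_2^+)=\delta e_N$, the convexity in [H2] lets one interpolate to controls with strictly signed normal components converging to the degenerate ones, one runs the non-degenerate construction for each perturbation, and one recovers the inequality for every $\mu\in[0,1]$ by letting the perturbation parameters tend to $0$ along suitable sequences. Without this perturbation-and-limit step, \eqref{eq:HJUm} and \eqref{eq:HJUp} are only established on a proper subset of $A_0(x_0)$, resp.\ $A_0^{\rm reg}(x_0)$, and the contradiction in your subsolution argument is not reached when a maximizer has vanishing normal component.
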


\begin{rem} Once it is proved that $\VFp$ is a viscosity solution of \eqref{Bellman-Om}-\eqref{Bellman-H-sub}-\eqref{Bellman-H-sup},
then \eqref{eq:HJUp} follows directly from Theorem  \ref{teo:sotto}, which concerns all subsolutions of
\eqref{Bellman-Om}-\eqref{Bellman-H-sub}-\eqref{Bellman-H-sup}. However we give below a direct proof for $\VFp$.
\end{rem}

\begin{proof}
We start by proving that $\VFm$ and $\VFp$ are both viscosity  supersolutions of  \eqref{eq:HJ}. Let $\VF=\VFp$ or $\VFm$.
    We consider $\phi\in C^1(\R^N)$ and $x_0 \in \R^N$
    such that $\VF-\phi$ has a local minimum at $x_0$, that is, for some $r>0$ we have
    $$ \VF(x_0)-\phi(x_0)\leq \VF(x)-\phi(x) \quad\hbox{if   } |x-x_0|<r \,.$$
    We assume that this min is zero for simplicity, i.e. $ \VF(x_0)=\phi(x_0)$.

    If  $x_0\in\Omega_1$ (or $\Omega_2$), we can always find a time  $\tau$ small enough so that $|\trajxo (t)-x_0|<r$
    and $\trajxo (t)\in\Omega_1$ (or $\Omega_2$), for $0<t<\tau$.  Therefore the proof in this case is classical and we do not detail it.
     (See \cite{BCD}, \cite{Ba}.)

    Now assume that  $x_0\in\H$ and $\tau$ is small enough so that $|\trajxo (t)-x_0|<r$. By  the Dynamic Programming Principle we have
    \begin{equation}\label{ineq:sursoDPPl}
    \VF(x_0) = \inf_{  (\trajxo,\a) }
    \apg  \int_0^\tau   \ell\big(\trajxo (t),\a (t)\big)  e^{-\lambda t} dt  +  e^{-\lambda \tau}  \VF (\trajxo(\tau))\chg\,,
    \end{equation}
     where the inf is taken over $\Ta$ or $\Treg$ according to whether $\VF=\VFm$ or $\VFp$. Thus
    \begin{equation}\label{ineq:sursol}
    \phi(x_0)\geq \inf_{  (\trajxo,\a)  }
    \apg  \int_0^\tau   \ell\big(\trajxo (t),\a (t)\big)  e^{-\lambda t} dt  +  e^{-\lambda \tau}  \phi (\trajxo(\tau))\chg\,.
    \end{equation}
   
    We use the expansion
    \be \label{exp}
    \begin{aligned}
    e^{-\lambda \tau}\phi(\trajxo (\tau))=& e^{-\lambda 0}\phi(x_0) +
    \int_0^\tau \Big\{ b_1(\trajxo(s),\alpha_1(s))\cdot D\phi(\trajxo(s)) \mathds{1}_{\Euno}(s) \\
    & + b_2(\trajxo(s),\alpha_2(s)) \cdot D\phi(\trajxo(s)) \mathds{1}_{\Edue }(s)  \\[2mm]
    &+ b_\H(\trajxo(s),\a(s))\cdot D\phi(\trajxo(s))\mathds{1}_{\Eh }(s)\\
    & - \lambda\phi(\trajxo(s))\Big\}\, e^{-\lambda s} \,ds\,.
    \end{aligned}
    \ee

    Using that $\mathds{1}_{\Euno}+\mathds{1}_{\Edue}+\mathds{1}_{\Eh}=1$ for the $(-\lambda\phi)$-term, we rewrite \eqref{ineq:sursol}
    with three contributions (for simplicity, we drop the '$s$'-dependence in the integrands and use the inversion sup/inf)
    $$\begin{aligned}
   0 \leq & \sup_{  (\trajxo,\a) } \int_0^\tau \Bigg\{
    \Big(-l_1(\trajxo,\alpha_1) - b_1(\trajxo,\alpha_1)\cdot D\phi(\trajxo) + \lambda\phi(\trajxo)\Big) \mathds{1}_{\Euno}(s)\\
    & + \Big(-l_2(\trajxo,\alpha_2) - b_2(\trajxo,\alpha_2)\cdot D\phi(\trajxo) + \lambda\phi(\trajxo)\Big) \mathds{1}_{\Edue}(s)\\
    & + \Big(-l_\H(\trajxo,\a) - b_\H(\trajxo,\a)\cdot D\phi(\trajxo) + \lambda\phi(\trajxo)\Big) \mathds{1}_{\Eh}(s)
    \Bigg\}\, e^{-\lambda s} \,ds\,.
    \end{aligned}
    $$
    Since the Hamiltonians are defined as supremum of the various quantities that appear in the integrand, we deduce that necessarily
    $$\begin{aligned}
   0 \leq & \sup_{  (\trajxo,\a) }\int_0^\tau \Bigg\{
    H_1\bigg(\trajxo(s),\phi(\trajxo(s)),D\phi(\trajxo(s))\bigg) \mathds{1}_{\Euno}(s)\\
    & + H_2\bigg(\trajxo(s),\phi(\trajxo(s)),D\phi(\trajxo(s))\bigg) \mathds{1}_{\Edue}(s)\\
    & + H_T^{*}\bigg(\trajxo(s),\phi(\trajxo(s)),D_\H\phi(\trajxo(s))\bigg)\mathds{1}_{\Eh}(s)    \Bigg\}\, e^{-\lambda s} \,ds\; ,
    \end{aligned}
    $$
    where $H_T^{*}=H_T$ if $\VF=\VFm$ and $H^{*}_T=\HTreg$ if $\VF=\VFp$.
    Next, we use that $H_1,H_2,H_T^*\leq\max(H_1,H_2)$
    together with $\mathds{1}_{\Euno}+\mathds{1}_{\Edue}+\mathds{1}_{\Eh}=1$ so that we arrive at
    \be \label{eq:super1}
    0 \leq    \sup_{  (\trajxo,\a)} \frac{1}{\tau} \int_0^\tau \max(H_1,H_2)\big(\trajxo(s),\phi(\trajxo(s)),D\phi(\trajxo(s))\big)\, e^{-\lambda s} \,ds\,.
    \ee

    Because of the regularity of $\phi$ and the continuity of the Hamiltonians we have that
    $$\max(H_1,H_2)\big(\trajxo(s),\phi(\trajxo(s)),D\phi(\trajxo(s))\big) = \max(H_1,H_2)\big(x_0,\phi(x_0),D\phi(x_0)\big)+ o(1)$$
    where  $o(1)$ denotes a quantity which tends to $0$ as  $s \rightarrow 0$, uniformly with respect to the control.  Therefore
    the sup in \eqref{eq:super1} can  be wiped out  and  sending $\tau\to0$,  we obtain
    $$\max(H_1,H_2)\big({x_0},\phi({x_0}),D\phi({x_0})\big)\geq 0$$ which means in the viscosity sense that the supersolution condition
    is verified on $\H$.

    Now we prove the subsolutions inequalities.
    We consider $\phi\in C^1(\R^N)$ and $x_0 \in \R^N$
    such that $\VF-\phi$ has a local maximum at $x_0$, that is, for some $r>0$ we have
    $$ \VF(x_0)-\phi(x_0)\geq \VF(x)-\phi(x) \quad\hbox{if   } |x-x_0|<r \,.$$
    Again, we assume that this max is zero for simplicity.

    Here also, if  $x_0\in\Omega_1$ (or $\Omega_2$), we can always find a time  $\tau$ small enough so that $|\trajxo (t)-x_0|<r$
    and $\trajxo (t)\in\Omega_1$ (or $\Omega_2$) for $0<t<\tau$. In this case the proof is classical (See \cite{BCD}, \cite{Ba}).
    So, assume that $x_0\in\H$, $\tau$ is small enough so that $|\trajxo (t)-x_0|<r$ for $t<\tau$.
    By  the Dynamic Programming Principle we have 
    \begin{equation}\label{ineq:DPP}
    \VF(x_0) = \inf_{  (\trajxo,\a) }
    \apg  \int_0^\tau   \ell\big(\trajxo (t),\a (t)\big)  e^{-\lambda t} dt  +  e^{-\lambda \tau}  \VF (\trajxo(\tau))\chg,
    \end{equation}
   thus
      \begin{equation}\label{ineq:sousol}
    \phi(x_0)\leq \inf_{  (\trajxo,\a)   }
    \apg  \int_0^\tau   \ell\big(\trajxo (t),\a (t)\big)  e^{-\lambda t} dt  +  e^{-\lambda \tau}  \phi (\trajxo(\tau))\chg.
    \end{equation}

    We distinguish now 5 sub-cases. Notice that since the inf is taken on $\Treg$ for $\VFp$, the third possibility below does not occur in this case.

    \noindent \textsc{Case 1 --} Let $\aluno, \aldue \in A$ be any constant control such that $b_1(x_0,\aluno) \cdot e_N >0$ and   $b_2(x_0,\aldue) \cdot e_N >0$.
    Then there exists  a time $\tau$ such that the controlled trajectory $(\trajxo,a)$ lives in $\Omega_1$, for all $s \in ]0,\tau]$. Therefore, by the inequality \eqref{ineq:sousol}, the expansion  \eqref{exp}  and classical arguments (dividing by $\tau$ and letting $\tau \rightarrow 0$), we obtain
    \be \label{sous1}
     -b_1(x_0,\aluno) \cdot D\phi(x_0)-l_1(x_0,\aluno)+\lambda \phi(x_0) \leq 0\,.
    \ee

    \noindent \textsc{Case 2 --} Let $\aluno, \aldue \in A$ be any constant control such that $b_1(x_0,\aluno) \cdot e_N <0$ and   $b_2(x_0,\aldue) \cdot e_N <0$.
  By the same argument as in case 1) we obtain
    \be \label{sous2}
     -b_2(x_0,\aldue) \cdot D\phi(x_0)-l_2(x_0,\aldue)+\lambda \phi(x_0) \leq 0\,.
    \ee

    \noindent \textsc{Case 3 --}  Let $\aluno, \aldue \in A$ be any constant control such that $b_1(x_0,\aluno) \cdot e_N >0$ and   $b_2(x_0,\aldue) \cdot e_N <0$
  (we can allow here also the case of one of the two to be zero).
    There exists then a trajectory $(\trajxo,a) \in \Tc_{x_0}$ such that $\trajxo(s) \in \H$ for a small time $\tau$. Indeed, if $y\in \H$ is close to $x_0$ and
    $\mu=\mu(y)$ is defined as follows  $$\mu(y):= \frac{-b_2(y,\aldue) \cdot e_N}{ (b_1(y,\aluno) - b_2(y,\aldue))\cdot e_N}, $$
    we consider the  solution of $\dot{x}(s)=\mu(x(s)) b_1(x(s), \aluno)+ (1-\mu(x(s))) b_2(x(s),\aldue)$, $x(0)=x_0$.
    By the regularity of  $b_1$ and $b_2$ (and thus of $\mu(y)$) the Cauchy-Lipschitz Theorem applies and it is easy to check that, by the definition of $\mu$, this trajectory lives in $\H$ in the interval $[0,\tau]$, for $\tau$ small enough. Moreover, by the signs of $b_1(x_0,\aluno) \cdot e_N$ and   $b_2(x_0,\aldue) \cdot e_N $, we have $0\leq \mu(x(s))\leq 1$ and therefore, $x(\cdot)$ is a controlled trajectory associated to $a=(\alpha_1,\alpha_2, \mu(x(\cdot)))$ on $[0,\tau]$. By Inequality \eqref{ineq:sousol}, the expansion  \eqref{exp}  and classical arguments, we obtain
 \be \label{sous3}
     -b_\H(x_0,\a) \cdot D\phi(x_0)-l_\H(x_0,\a)+\lambda \phi(x_0) \leq 0\,.
    \ee

    \noindent \textsc{Case 4 --}  Let $\aluno, \aldue \in A$ be any constant controls such that $b_1(x_0,\aluno) \cdot e_N <0$ and   $b_2(x_0,\aldue) \cdot e_N >0$  (here also we can allow  the case of one of the two to be zero). By the same argument as in case 3) we obtain that (\ref{sous3}) holds.\\

    \noindent \textsc{Case 5 --}  Let $\aluno, \aldue \in A$ be any constant control such that $b_1(x_0,\aluno) \cdot e_N =0$ and  $b_2(x_0,\aldue) \cdot e_N =0$.
 By the controllability assumption [H2], there exist $\alpha^-_1,\alpha^+_2$ such that $b_1(x_0,\alpha^-_1) = -\delta e_N$ and
 $b_2(x_0,\alpha^+_2)  = \delta e_N$ ($\delta >0$ given by [H2]). For $0<\eta,\eta' <1$, by the convexity assumption in   [H2], we can find $\alpha^\eta_1$ and
$\alpha_2^{\eta'}$ such that $b_1(x_0,\alpha^\eta_1)=\eta b_1(x_0,\alpha^-_1) + (1-\eta) b_1(x_0,\aluno)$  and
$b_2(x_0,\alpha^{\eta'}_2)={\eta'} b_2(x_0,\alpha^-_2) + (1-{\eta'}) b_2(x_0,\aldue)$, respectively.
Therefore $b_1(x_0,\alpha^\eta_1) \cdot e_N = -\eta\delta <0$ and $b_2(x_0,\alpha^\eta_2) \cdot e_N = {\eta'}\delta  >0$,  and, by  arguing as in case {\it 4)}, we obtain again that (\ref{sous3}) holds
with $\a=( \alpha^\eta_1, \alpha^{\eta'}_2, \bar\mu(\eta,\eta'))$ where
$$ \bar \mu(\eta,\eta'):=\frac{-b_2(x_0,\alpha^{\eta'}_2) \cdot e_N}{ (b_1(x_0,\alpha^\eta_1) - b_2(x_0,\alpha^{\eta'}_2))\cdot e_N}=\frac{\eta'}{\eta+\eta'}\; ,$$
 for all $0<\eta,\eta' <1$. By construction, $b_1(x_0,\alpha^\eta_1) \rightarrow b_1(x_0,\aluno)$ and $b_2(x_0,\alpha^{\eta'}_2) \rightarrow b_2(x_0,\aldue)$ as $\eta,\eta' \ds 0$ and, on the other hand, given some $0\leq \mu \leq 1$, we can let $\eta,\eta'$ tend to $0$ in  such a way that $ \bar\mu(\eta,\eta') \to  \mu$.
 Then, \eqref{sous3} holds for any $0\leq \mu \leq 1$ by letting  $\eta,\eta' \ds 0$ in a suitable way, recalling also the continuity of $b_1$ and $b_2 $.

  By remarking that cases 3, 4 and 5 imply that either  $-b_1(x_0,\aluno) \cdot D\phi(x_0)-l_1(x_0,\aluno)+\lambda \phi(x_0) \leq 0$ or
  $-b_2(x_0,\aldue) \cdot D\phi(x_0)-l_2(x_0,\aldue)+\lambda \phi(x_0) \leq 0$,  and that we classified all the possible constant controls
   we can conclude that
    $$
    \min(H_1,H_2)(x_0, \phi(x_0),D\phi(x_0)) \leq 0
    $$
  and the proof of \eqref{eq:HJ} is complete.

\

    Let us now prove that $\VFm$  verifies \eqref{eq:HJUm}.    We consider then $\phi \in C^1(\R^{N-1})$ and $x_0=(x'_0,0) \in \H$
    such that $x'\mapsto \VFm(x',0)-\phi (x')$ has a local maximum at $x'_0$.  We assume that this max is zero for simplicity
    and we extend the test function as follows: $\tilde\phi(x',x_N)=\phi(x')$ which is a $C^1(\R^N)$ function, independent of the $N$-th variable.
    Notice that $D\tilde\phi(x)=(D_\H\phi(x'),0)$.

    If $(\trajxo,a)$ is a controlled trajectory such that $\trajxo (t) \in \H$ for $t\in [0,\bar\tau]$ for some $\bar\tau >0$,
    we have for $0<\tau < \bar \tau$, by  the Dynamic Programming Principle
        \begin{equation*}
    \VFm(x_0)\leq
  \int_0^\tau   \ell\big(\trajxo (t),\a (t)\big)  e^{-\lambda t} dt  +  e^{-\lambda \tau}  \VFm(\trajxo(\tau))\,,
    \end{equation*}
   which implies
      \begin{equation*}
    \tilde\phi(x_0)\leq  \int_0^\tau   \ell\big(\trajxo (t),\a (t)\big)  e^{-\lambda t} dt  +  e^{-\lambda \tau}  \tilde\phi (\trajxo(\tau))\; .
    \end{equation*}
    The proof follows  the same arguments as before in the proof of $\VFm$ being a subsolution of \eqref{eq:HJ} for the cases 3,4 and 5
    (for which we have indeed $\trajxo (t) \in \H$ for $t\in [0,\bar\tau]$ for some $\bar\tau >0$).
    In other words, since we are considering only the controls in $A_0(x_0)$, we  do not have controls fulfilling cases 1 and 2.
    Therefore, all the possible controls in $A_0(x_0)$ are considered for which we obtain \eqref{sous3}.
    Thus,
    $$ \sup_{\a \in A_0(x_0)} \big\{-b_\H(x_0,\a) \cdot D\tilde\phi(x_0)-l_\H(x_0,\a)+\lambda \tilde\phi(x_0) \big\} \leq 0\,,
    $$
    that we interpret as follows:
    $$ H_T\big(x_0,\phi(x_0), D_\H\phi(x_0)\big)=
    \sup_{\a \in A_0(x_0)} \big\{-b_\H(x_0,\a) \cdot (D_\H\phi(x_0'),0)-l_\H(x_0,\a)+\lambda\phi(x_0') \big\} \leq 0\,,
    $$
    hence \eqref{eq:HJUm} holds.
    In order to prove that  $\VFp$  verifies   \eqref{eq:HJUp}   we argue exactly as before remarking that we do not have to consider cases 1, 2 and 3.
\end{proof}

%

In what follows, we are going to consider control problems set in either $\Omega_1$ or $\Omega_2$ (or their closure).
For the sake of clarity we use the following notation.
If $x_0 \in \Omega_i$, ($i=1,2$) and $\alpha_i (\cdot)\in L^\infty(\R^+;A_i)$, we will denote by   $\trajyali(\cdot)$ the solution of the
following ode
\be \label{def:trajY}
  \dottrajyali(s) = b_i(\trajyali(s), \alpha_i(s))\quad,\quad \trajyali(0) = x_0\: .
 \ee

Our next result is a (little bit unusual) supersolution property which is satisfied by $ \VFp$ on $\H$.

\begin{theo}  \label{teo:condplus.VFp}
Assume [H0], [H1] and [H2]. Let $\phi\in C^1(\R^{N-1})$ and suppose that $x_0^\prime$ is a minimum point of
$x' \mapsto \VFp (x',0)-\phi (x')$. Then we have either  \\[2mm]
{\bf A)} There exist $\eta >0$, $i=1$ or $2$ and a control $\alpha_i (\cdot)$ such that, if $x_0:=(x_0^\prime,0)$, $\trajyali(s) \in \Omega_i $ for
all $s \in ]0,\eta]$
and
\be
\VFp(x_0) \geq
   \int_0^{\eta} l_i(\trajyali(t), \alpha_i(t)) e^{-\lambda t} dt + \VFp (\trajyali(\eta))  e^{-\lambda \eta}
\ee
or \\
{\bf B)} it holds
\be  \label{condhyperSUP.VFp}
\HTreg \big(x_0,\VFp(x_0), D_\H\phi(x_0')\big) \geq 0.
\ee
\end{theo}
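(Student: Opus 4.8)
The statement is an (unusual) supersolution property of $\VFp$ relative to $\H$, so the plan is to read it as a dichotomy coming from the Dynamic Programming Principle for $\VFp$ along an optimal trajectory issued from $x_0$, and to mirror the supersolution part of Theorem~\ref{teo:HJ} but in the tangential variables. First I would normalise: since only $D_\H\phi(x_0')$ enters the conclusion, I may add a constant to $\phi$ so that $\VFp(x',0)\ge\phi(x')$ near $x_0'$ with equality at $x_0'$; in particular $\VFp(x_0)=\phi(x_0')$. As in the proof of \eqref{eq:HJUm} I extend $\phi$ to $\tilde\phi(x)=\phi(x')$, which is $C^1(\R^N)$ and satisfies $D\tilde\phi(x)=(D_\H\phi(x'),0)$. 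The engine is the Dynamic Programming Principle for $\VFp$ on a small horizon $[0,\eta]$: starting from a minimising sequence of pairs in $\Treg$ and using that, under [H2], the differential inclusion \eqref{def:traj} has compact convex values (so that Arzel\`a--Ascoli applies and the cost is lower semicontinuous), I would extract an optimal regular pair $(\trajxo,a)\in\Treg$ realising $\VFp(x_0)=\int_0^\eta \ell(\trajxo,a)\,e^{-\lambda t}\,dt+e^{-\lambda\eta}\VFp(\trajxo(\eta))$.

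Next I would split according to the behaviour of the normal component $y(t):=\trajxo(t)\cdot e_N$ for small $t$. If $y$ keeps one strict sign on some interval $(0,\eta']$, the trajectory lives in a single $\Omega_i$, where it solves the pure $\Omega_i$ dynamics and hence coincides with a curve $\trajyali$ driven by $b_i(\cdot,\alpha_i)$ with $\trajyali(s)\in\Omega_i$ for $s\in(0,\eta']$; the Bellman optimality of $\trajxo$ on $[0,\eta']$ then reads $\VFp(x_0)=\int_0^{\eta'} l_i(\trajyali,\alpha_i)\,e^{-\lambda t}\,dt+e^{-\lambda\eta'}\VFp(\trajyali(\eta'))$, which is exactly alternative {\bf A}.

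If instead $y\equiv0$ on some initial interval $(0,\eta']$, the trajectory stays on $\H$ with $a(t)\in A_0^{\rm reg}(\trajxo(t))$, and I would differentiate the optimality relation. Expanding $e^{-\lambda t}\tilde\phi(\trajxo(t))$ along the tangential dynamics and using $\VFp(\trajxo(\eta'))\ge\tilde\phi(\trajxo(\eta'))$ together with $\VFp(x_0)=\tilde\phi(x_0)$ gives
\[
\int_0^{\eta'}e^{-\lambda s}\big[\,l_\H(\trajxo,a)+b_\H(\trajxo,a)\cdot D\tilde\phi(\trajxo)-\lambda\tilde\phi(\trajxo)\,\big]\,ds\le 0.
\]
Letting $\eta'\to0$ and extracting a cluster value $\bar a\in A_0^{\rm reg}(x_0)$ of $a(s)$ as $s\to0$ (possible since $b_\H$ is continuous and $A_0^{\rm reg}(\cdot)$ is closed and upper semicontinuous in $x$) yields $-b_\H(x_0,\bar a)\cdot(D_\H\phi(x_0'),0)+\lambda\VFp(x_0)-l_\H(x_0,\bar a)\ge0$, hence $\HTreg\big(x_0,\VFp(x_0),D_\H\phi(x_0')\big)\ge0$, which is alternative {\bf B}.

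The main obstacle is the genuinely delicate intermediate configuration in which the optimal trajectory neither stays on $\H$ nor enters one fixed $\Omega_i$ on a clean interval, but chatters between $\H$ and the two half-spaces with the excursions accumulating at $t=0$. Two difficulties arise: guaranteeing that the limit extracted by compactness is not only admissible but \emph{regular} (this is exactly the feature separating $\VFp$ from $\VFm$, since a limit of regular trajectories could a priori generate a singular tangential dynamic on $\H$), and converting ``entering $\Omega_i$ at arbitrarily small times'' into a genuine immediately-entering curve $\trajyali$ as required by {\bf A}. I expect to resolve this by a blow-up/rescaling analysis at $t=0$ measuring the density of time spent in each region: either some $\Omega_i$ carries a positive density, in which case controllability [H2] and continuity let me replace the excursions by a single immediately-entering optimal competitor and reach {\bf A}, or the tangential times have full density, so the $\Omega_i$-contributions are negligible in the displayed expansion and one is back to {\bf B}. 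Closing this dichotomy rigorously—ruling out pathological oscillations and controlling the limiting control on $\H$—is where the real work lies.
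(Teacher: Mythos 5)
Your route is genuinely different from the paper's, and as it stands it has two unclosed gaps. First, the very first step — extracting an \emph{optimal} pair $(\trajxo,a)\in\Treg$ realising the infimum defining $\VFp(x_0)$ — is not justified and is in fact the crux of the whole matter: $\Treg$ is not closed under the compactness you invoke (uniform convergence of trajectories plus weak convergence of velocities), because a limit of regular trajectories oscillating across $\H$ can produce a trajectory staying on $\H$ driven by a \emph{singular} mix $b_\H$. This failure of closedness is precisely what makes $\VFp\neq\VFm$ possible, so it cannot be assumed away; and if you retreat to $\eps$-optimal trajectories, alternative \textbf{A)} (which is an exact inequality) no longer follows. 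Second, you yourself flag that the chattering case — excursions into $\Omega_1$, $\Omega_2$ and returns to $\H$ accumulating at $t=0$ — is where ``the real work lies''; the density/blow-up dichotomy you sketch is a plan, not an argument. In particular, converting ``positive density of time spent in $\Omega_i$ near $t=0$'' into a single competitor with $\trajyali(s)\in\Omega_i$ for \emph{all} $s\in\,]0,\eta]$ satisfying the inequality of \textbf{A)}, and controlling the cost of the discarded excursions, is not routine. So the proposal correctly identifies the dichotomy and handles the two clean cases, but the configuration that actually separates \textbf{A)} from \textbf{B)} is left open.

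For contrast, the paper's proof (modelled on that of Theorem~\ref{teo:condplus}, with $\HT$ replaced by $\HTreg$) never looks at trajectories issued from $x_0$ at all. It is a PDE argument: one tests the supersolution property of $\VFp$ against the penalised functions $\tilde\phi(x)+\delta x_N-x_N^2/\eps^2$; if for every $\delta$ the minimum sits on $\H$, the convexity and coercivity of $\delta\mapsto\max(H_1,H_2)\big(x_0,\VFp(x_0),D_\H\phi(x_0')+\delta e_N\big)$ together with Rockafellar's description of the subdifferential at its minimiser $\bar\delta$ produce the weight $\mu$ and the controls giving \eqref{condhyperSUP.VFp}; otherwise the minimum point $x_\eps$ lies off $\H$, and Lemma~\ref{reverse} (the backward dynamic programming inequality for supersolutions in each open half-space, proved via exit-time Dirichlet problems) supplies the trajectories of alternative \textbf{A)}. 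Because the only controlled trajectories ever used live in a single $\Omega_i$ up to their exit time, both the closure-of-$\Treg$ issue and the chattering issue are bypassed. If you want to salvage your control-theoretic approach, you would at minimum need a relaxation/closure result for $\Treg$ (or a direct argument on $\eps$-optimal trajectories yielding \textbf{A)} in the limit) and a rigorous treatment of accumulating excursions; the paper's convex-analysis route avoids both.
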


We skip the proof of this result to reduce the length of our paper since it is similar to the proof of Theorem~\ref{teo:condplus}.

\section{Properties of viscosity sub and supersolutions}\label{sec:Prop-sub-super}

In this section  we describe the properties fulfilled by the sub and supersolutions of system  \eqref{Bellman-Om}-\eqref{Bellman-H-sub}-\eqref{Bellman-H-sup}. We are going to consider only bounded sub and supersolutions, a natural class according to Section~\ref{sec:control}.
Because of [H2], the subsolutions are automatically Lipschitz continuous since the Hamiltonians are coercive but, a priori, the supersolutions may be  only  lower semicontinuous.

We first prove  that any subsolution of \eqref{Bellman-Om}-\eqref{Bellman-H-sub}-\eqref{Bellman-H-sup} is a viscosity subsolution of $\HTreg=0$ on the hyperplane $\H$.

\begin{theo} \label{teo:sotto}
Assume [H0], [H1] and  [H2]. If $u:\R ^N \ds \R$ is a bounded, Lipschitz continuous subsolution of \eqref{Bellman-Om}-\eqref{Bellman-H-sub}-\eqref{Bellman-H-sup}, then $u$ is a subsolution of (\ref{eq:HJUp}).
%
\end{theo}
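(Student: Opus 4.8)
The plan is to verify the subsolution property of $\HTreg$ directly from the definition. I fix $\phi\in C^1(\R^{N-1})$, a local maximum point $x_0'$ of $x'\mapsto u(x',0)-\phi(x')$, and write $x_0=(x_0',0)$, $p_\H:=D_\H\phi(x_0')$; the goal is $\HTreg(x_0,u(x_0),p_\H)\le0$. Since $\HTreg$ is a supremum over $\a=(\alpha_1,\alpha_2,\mu)\in\Aoreg$ of the convex combination $\mu\,[-b_1\cdot(p_\H,0)+\lambda u-l_1]+(1-\mu)\,[-b_2\cdot(p_\H,0)+\lambda u-l_2]$, and since $\mu,1-\mu\ge0$, it is enough to prove the two one-sided tangential inequalities
\[ -b_i(x_0,\alpha_i)\cdot(p_\H,0)+\lambda u(x_0)-l_i(x_0,\alpha_i)\le0,\qquad i=1,2, \]
for every admissible pair, i.e. for every $\alpha_1$ with $b_1(x_0,\alpha_1)\cdot e_N\le0$ and every $\alpha_2$ with $b_2(x_0,\alpha_2)\cdot e_N\ge0$, and then recombine them.

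I would first reduce to strictly transversal controls: given $\alpha_1$ with $b_1(x_0,\alpha_1)\cdot e_N\le0$, I use the convexity and the controllability of [H2] --- exactly as in Case~5 of the proof of Theorem~\ref{teo:HJ} --- to produce, for small $\eta>0$, a control $\alpha_1^\eta$ with $b_1(x_0,\alpha_1^\eta)\cdot e_N\le-\eta\delta<0$ and with $(b_1,l_1)(x_0,\alpha_1^\eta)\to(b_1,l_1)(x_0,\alpha_1)$ as $\eta\to0$, so that it suffices to treat the strictly inward case and let $\eta\to0$. For such a control the mechanism is to invoke the bulk subsolution inequality $H_1\le0$ \emph{inside} $\Omega_1$: after making $x_0'$ a strict maximum (adding $|x'-x_0'|^2$ to $\phi$) and extending $\phi$ to be independent of $x_N$, I look at the maximum $x_\beta$ of $x\mapsto u(x)-\phi(x')-\beta x_N$ over $\overline{\Omega}_1\cap\overline{B}(x_0,r)$, for $\beta>0$ suitably large, which concentrates at $x_0$. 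If $x_\beta\in\Omega_1$, the subsolution inequality at $x_\beta$ with test gradient $(D_\H\phi(x_\beta'),\beta)$ gives, for the control $\alpha_1^\eta$,
\[ -b_1(x_\beta,\alpha_1^\eta)\cdot(D_\H\phi(x_\beta'),0)+\lambda u(x_\beta)-l_1(x_\beta,\alpha_1^\eta)\le \beta\,b_1(x_\beta,\alpha_1^\eta)\cdot e_N\le0, \]
and passing to the limit yields the one-sided inequality; the decisive point is that the sign $b_1\cdot e_N\le0$ turns the a priori unfavorable normal term into a quantity on the correct side. The case $i=2$ is entirely symmetric, penalizing with $-\beta x_N$ over $\overline{\Omega}_2$ and using $b_2\cdot e_N\ge0$.

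The main obstacle is precisely that $H_1\le0$ is guaranteed only in the open half-space, while the constrained maximum $x_\beta$ may concentrate on $\H$ rather than in $\Omega_1$; on $\H$ the only information available is \eqref{Bellman-H-sub}, i.e. $\min\{H_1,H_2\}\le0$, which by itself controls only \emph{one} of the two Hamiltonians and with an a priori uncontrolled normal slope. The plan for this interface case is to exploit the controllability of [H2] to rule out the ``wrong'' branch of the minimum: evaluating $H_2$ against the control $\alpha_2^-$ with $b_2(x_0,\alpha_2^-)=-\delta e_N$ (resp. $H_1$ against $\alpha_1^+$ with $b_1(x_0,\alpha_1^+)=\delta e_N$) at a large normal slope makes that Hamiltonian strictly positive, so the minimum must be realized by the Hamiltonian of the \emph{incoming} domain --- which is exactly the one carrying the favorable sign. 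Reconciling the size of the penalization parameter $\beta$, the interior/boundary alternative, and the controllability constant $\delta$, so that the normal component of the gradient never spoils the tangential conclusion, is the delicate technical heart of the argument and where I expect the real work to lie; it parallels the proof of Theorem~\ref{teo:condplus} to which the authors refer.

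Once both one-sided inequalities are established at $x_0$, I recombine them with the weights $\mu$ and $1-\mu$, which by nonnegativity gives $-b_\H(x_0,\a)\cdot(p_\H,0)+\lambda u(x_0)-l_\H(x_0,\a)\le0$ for every $\a\in\Aoreg$; taking the supremum over $\Aoreg$ produces $\HTreg(x_0,u(x_0),p_\H)\le0$, which is exactly the subsolution property \eqref{eq:HJUp} claimed on $\H$.
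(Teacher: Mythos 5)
There is a genuine gap, and it lies in your very first reduction. You propose to prove the two \emph{separate} tangential inequalities
\[
-b_i(x_0,\alpha_i)\cdot(p_\H,0)+\lambda u(x_0)-l_i(x_0,\alpha_i)\le0 \qquad(i=1,2),
\]
for every $\alpha_1$ with $b_1(x_0,\alpha_1)\cdot e_N\le0$ and every $\alpha_2$ with $b_2(x_0,\alpha_2)\cdot e_N\ge0$, and only then recombine with the weights $\mu$, $1-\mu$. These separate inequalities are false in general. Take $N=1$, $A_1=A_2=[-1,1]$, $b_i(x,\alpha_i)=\alpha_i$, $l_1(x,\alpha_1)=(1+\alpha_1)/2$, $l_2\equiv1$; [H0]--[H2] hold. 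One computes $\VF_\mathrm{SC1}(0)=u_\H^{\mathrm{reg}}(0)=\frac{1}{2\lambda}$ and $\VF_\mathrm{SC2}(0)=\frac1\lambda$, so $u:=\VFp$ is a bounded Lipschitz subsolution (indeed a solution) with $u(0)=\frac{1}{2\lambda}$ and $u(x)=\frac{e^{-\lambda x}}{2\lambda}$ for $x\ge0$. Your inequality for $i=1$ with the admissible control $\alpha_1=-1$ reads $\lambda u(0)-l_1(0,-1)=\frac12-0\le0$, which is false. The reason is structural: in $\Omega_1$ the subsolution property only controls $-b_1\cdot(p_\H,0)-(b_1\cdot e_N)\,\partial_N u+\lambda u-l_1$, and when $b_1\cdot e_N<0$ the normal contribution can be arbitrarily negative (here $\partial_N u(0^+)=-\frac12$), so nothing forces the purely tangential part to be nonpositive. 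Correspondingly, in your penalization argument the maximum of $u-\phi-\beta x_N$ over $\overline\Omega_1$ sits on $\H$ as a \emph{constrained} maximum for every relevant $\beta$ (in the example $x\mapsto u(x)-\beta x$ is decreasing on $[0,r]$ for all $\beta>-\frac12$), and a maximum constrained to $\overline\Omega_1$ is not a test point for the equation in $\R^N$, so neither $H_1\le0$ nor $\min\{H_1,H_2\}\le0$ may be invoked there; no tuning of $\beta$ against $\delta$ can repair this, since the target inequality itself fails.

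What the paper actually proves is a \emph{coupled} statement: for a fixed $\a=(\alpha_1,\alpha_2,\mu)\in\Aoreg$ with $b_1\cdot e_N<0<b_2\cdot e_N$, one first chooses the normal slope $\bar\eta$ that saturates the $b_1$-inequality, i.e. $-b_1(x_0,\alpha_1)\cdot(D\tilde\phi(x_0)+\bar\eta e_N)-l_1(x_0,\alpha_1)+\lambda u(x_0)=0$ (solvable precisely because $b_1\cdot e_N\neq0$), then uses the penalization $-\eta x_N-x_N^2/\eps^2$ with $\eta>\bar\eta$ to force the \emph{unconstrained} maximum $x_\eps$ into $\overline\Omega_2$, where $H_2\le0$ (or the $H_2$ branch of the min on $\H$) yields $-b_2(x_0,\alpha_2)\cdot(D\tilde\phi(x_0)+\bar\eta e_N)-l_2(x_0,\alpha_2)+\lambda u(x_0)\le0$. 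Each of the two one-sided inequalities thus holds only \emph{at the common normal slope} $\bar\eta$, and the normal terms cancel only after taking the convex combination, because $\mu\, b_1\cdot e_N+(1-\mu)\,b_2\cdot e_N=0$. This coupling of $\alpha_1$ and $\alpha_2$ through $\bar\eta$ --- equivalently through the constraint $b_\H(x_0,\a)\cdot e_N=0$ --- is exactly the information your decomposition discards at the outset and cannot be recovered afterwards. Your preliminary reduction to strictly transversal controls via the convexity/controllability argument of Case~5 is correct and is also used in the paper, but the core of the strategy needs to be replaced by the coupled argument above.
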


\begin{proof} Let $\phi(\cdot)$ be a $C^1$-function on $\R^{N-1}$ and $x'_0$ a maximum point of $x' \mapsto u(x',0)-\phi(x')$,
our aim is  then to prove that, for any $\a \in \Aoreg$ where $x_0=(x'_0,0)$, we have
\be  \label{sotto:tesi1}
-b_\H(x_0,a) \cdot (D_\H\phi(x'_0),0)-l_\H(x_0,a) +\lambda u(x_0) \leq 0.
\ee
We first remark that it is sufficient to prove this inequality for the elements $\a$ of $\Aoreg$
such that
\be  \label{sotto:sottocasostretto}
b_1(x_0,\aluno) \cdot e_N <0 , \: b_2(x_0,\aldue) \cdot e_N >0  \mbox{ and } (\mu b_1(x_0,\aluno)+(1-\mu)  b_2(x_0,\aldue)) \cdot e_N =0.
\ee
Indeed, the case of non-strict inequalities can be recovered thanks to assumptions [H0]--[H2], with the same argument as in the proof of Theorem
\ref{teo:HJ}, \eqref{eq:HJUp}--\textsc{Case 5}.

We fix now any triple $(\aluno,\aldue,\mu)$ such that \eqref{sotto:sottocasostretto} is fulfilled and, as usual,
we define the function $\tilde{\phi}(x',x_N):=\phi(x')$ for which $D\tilde{\phi}(x_0)=(D_\H \phi(x'),0)$.

For $0<\eps \ll 1$,  we consider the function
\begin{equation}  \label{tra:test}
 x \mapsto u(x)-\tilde{\phi}(x) - \eta x_N -\frac{x_N^2}{\eps^2} - |x-x_0|^2:=  u(x)-\psi_\eps(x)
\end{equation}
where  the constant $\eta \in \R$ is  chosen as follows: we consider the solution $\bar{\eta} \in \R$ of
$$ -b_1(x_0,\alpha_1) \cdot  (D\tilde{\phi}(x_0) +\bar{\eta}e_N) - l_1(x_0,\aluno) + \lambda u(x_0) = 0\; .$$
Such a solution exists because of Property~(\ref{sotto:sottocasostretto}) on $b_1(x_0,\alpha_1)$ and we choose
$ \eta > \bar{\eta}$. Therefore
$$ -b_1(x_0,\alpha_1) \cdot  (D\tilde{\phi}(x_0) +\eta e_N) - l_1(x_0,\aluno) + \lambda u(x_0) >0\; .$$

By standard arguments, the function $u-\psi_\eps$ has a local maximum
$x_\eps$ in $\R^N$ and $x_\eps \ds x_0$ as $\eps \ds 0$. We want first to show that for $\eps>0$ small enough
(the other parameters being fixed for the moment), $x_\eps$ necessarily belongs to $\overline{\Omega}_2$ due to the penalization.
So, assume on the contrary that $x_\eps \in \Omega_1$. Since $u$ is a subsolution of \eqref{Bellman-Om}-\eqref{Bellman-H-sub}-\eqref{Bellman-H-sup}, we have
$$H_1(x_\eps,u(x_\eps), D\psi_\eps(x_\eps)) \leq 0 \; ,$$
which implies
\be \label{cond:omega1}
  -b_1(x_\eps,\aluno)  \cdot D\psi_\eps(x_\eps) -  l_1(x_\eps,\aluno) +\lambda u(x_\eps) \leq 0 \; .
\ee
But $\displaystyle D\psi_\eps(x_\eps)=D\tilde{\phi}(x_\eps) +\eta e_N + 2\frac{(x_\eps)_N}{\eps^2} e_N+ 2(x_\eps-x_0)$ and therefore
$$
-b_1(x_\eps,\aluno)  \cdot D\psi_\eps(x_\eps)=-b_1(x_\eps,\aluno)  \cdot (D\tilde{\phi}(x_0) +\eta e_N) -2\frac{(x_\eps)_N}{\eps^2} b_1(x_\eps,\aluno)\cdot e_N +o_\eps(1)\,,$$
where $o_\eps(1)$ is a quantity going to zero as $\eps\to0$, the other parameters being fixed.
But, because again of Property~(\ref{sotto:sottocasostretto}) and the fact that $x_\eps \in \Omega_1$ which implies $(x_\eps)_N>0$, we have
$$
-2\frac{(x_\eps)_N}{\eps^2} b_1(x_\eps,\aluno)\cdot e_N > 0\; .
$$
Finally, recalling the continuity of $b_1, l_1, \tilde{\phi}$ and $u$, we deduce that
$$ -b_1(x_\eps,\aluno)  \cdot D\psi_\eps(x_\eps) -  l_1(x_\eps,\aluno) +\lambda u(x_\eps)\geq \qquad\qquad\qquad\qquad\qquad$$
$$\qquad\qquad\qquad\qquad\qquad
-b_1(x_0,\alpha_1) \cdot  (D\tilde{\phi}(x_0) +\eta e_N) - l_1(x_0,\aluno) + \lambda u(x_0) + o_\eps(1)\; .$$
Our choice of $\eta$ implies that Inequality~(\ref{cond:omega1}) cannot hold for $\eps$ small enough,
and therefore $x_\eps \in \overline \Omega_2$.

In the same way,  if $x_\eps \in \H$, we have
 \be \label{cond:Hmin}
 \min(H_1(x_\eps,u(x_\eps), D\psi_\eps(x_\eps)),H_2(x_\eps,u(x_\eps), D\psi_\eps(x_\eps))) \leq 0 \ee
but the above proof shows that, for $\eps$ small enough,
$$H_1(x_\eps,u(x_\eps), D\psi_\eps(x_\eps))  > 0\; ,$$
and therefore
$$H_2(x_\eps,u(x_\eps), D\psi_\eps(x_\eps)) \leq 0\; .$$
In particular, this implies
\be \label{cond:Hiper}
 -b_2(x_\eps,\aldue)  \cdot D\psi_\eps(x_\eps) -  l_2(x_\eps,\aldue) +\lambda \psi_\eps(x_\eps) \leq 0  \; .
\ee
Now $x_\eps \in \overline{\Omega}_2$ which implies $(x_\eps)_N\leq 0$ and invoking again Property~(\ref{sotto:sottocasostretto}), we have
$$
-2\frac{(x_\eps)_N}{\eps^2} b_2(x_\eps,\aluno)\cdot e_N \geq 0\; ,
$$
since $b_2(x_\eps,\aluno)\cdot e_N\geq 0$ for $\eps$ small enough. This yields
$$-b_2(x_0,\alpha_2) \cdot  (D\tilde{\phi}(x_0) +\eta e_N) - l_2(x_0,\aluno) + \lambda u(x_0) + o_\eps(1)\leq 0\; .$$
In this inequality, we first let $\eps$ tend to $0$ and then $\eta$ tend to $\bar{\eta}$.

In order to conclude, we use the value of $\bar{\eta}$, namely
$$
\bar{\eta}= \frac{-b_1(x_0,\alpha_1) \cdot  D\tilde{\phi}(x_0) -l_1(x_0,\aluno) + \lambda u(x_0)}{b_1(x_0,\alpha_1) \cdot e_N } \; ,
 $$
and an easy computation on the inequality
$$-b_2(x_0,\alpha_2) \cdot  (D\tilde{\phi}(x_0) +\bar{\eta} e_N) - l_2(x_0,\aluno) + \lambda u(x_0) \leq 0\; ,$$
provides the desired inequality.
\end{proof}

\

Now we prove two properties verified by sub and supersolutions in the domains  $\Omega_i$, that will be  important to obtain the uniqueness results.

\begin{lem}  \label{reverse}
Assume [H0], [H1] and [H2]. Let $v:\R^N \ds \R$ be a lsc supersolution of  \eqref{Bellman-Om}-\eqref{Bellman-H-sub}-\eqref{Bellman-H-sup}, and
 $u:\R^N \ds \R$ be  a Lipschitz continuous subsolution of  \eqref{Bellman-Om}-\eqref{Bellman-H-sub}-\eqref{Bellman-H-sup}.
Then, if $x_0 \in \Omega_i$, we have for all $t\geq 0$
 \be \label{reverse:super}
v(x_0) \geq
 \inf_{\alpha_i(\cdot),\theta_i}
\,\biggl[\int_0^{t\wedge\theta_i} l_i(\trajyali(s),\alpha_i(s))
e^{-\lambda s}ds
+v(\trajyali(t\wedge\theta_i))e^{-\lambda( t\wedge\theta_i)}\biggr]\; ,
\ee
and
\be \label{reverse:sub}
u(x_0) \leq
 \inf_{\alpha_i(\cdot),\theta_i}
\,\biggl[\int_0^{t\wedge\theta_i} l_i(\trajyali(s),\alpha_i(s))
e^{-\lambda s}ds
+u(\trajyali(t\wedge\theta_i))e^{-\lambda( t\wedge\theta_i)}\biggr]\; ,\ee
where $\trajyali$ is the solution of the ode \eqref{def:trajY}
and the infima are taken on all stopping time $\theta_i$ such that $\trajyali(\theta_i) \in \H$ and $\tau_i \leq \theta_i \leq \bar \tau_i$ where $\tau_i$ is the exit time of the trajectory $\trajyali$ from $\Omega_i$ and $ \bar \tau_i$ is the one from $\overline{\Omega}_i $. \\
\end{lem}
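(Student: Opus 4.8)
The plan is to recognize the two displayed inequalities as, respectively, the \emph{sub-optimality} and \emph{super-optimality} principles for the single-domain exit-time problem in $\Omega_i$, and to prove them by reducing to the open half-space where the relevant one-sided equation holds. By the first two lines of \eqref{eq:HJ}, the restriction of the subsolution $u$ (resp. of the supersolution $v$) to $\Omega_i$ is a viscosity subsolution (resp. supersolution) of $H_i(x,w,Dw)=0$ in the open set $\Omega_i$; the dynamics $b_i$, the cost $l_i$ and the trajectory $\trajyali$ of the statement are exactly those of a classical infinite-horizon problem, and the constraint $\tau_i\le\theta_i\le\bar\tau_i$ with $\trajyali(\theta_i)\in\H$ is precisely the natural stopping rule at the boundary $\H=\partial\Omega_i$. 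Throughout I use that $b_i,l_i$ are defined and continuous on all of $\R^N$ (by [H0],[H1]), and that $H_i$ is convex in $(w,p)$.

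First I would prove the subsolution inequality \eqref{reverse:sub}, which must hold for \emph{every} admissible pair $(\alpha_i(\cdot),\theta_i)$. Fix such a pair, set $\sigma:=t\wedge\theta_i$ and introduce
\[
\Psi(s):=\int_0^s l_i(\trajyali(r),\alpha_i(r))\,e^{-\lambda r}\,dr+u(\trajyali(s))\,e^{-\lambda s}\,,
\]
so that \eqref{reverse:sub} amounts to $\Psi(0)\le\Psi(\sigma)$, i.e. to the monotonicity of $\Psi$. Where the trajectory lies in the open set $\Omega_i$ this is standard: the mollification $u_\eps:=u\ast\rho_\eps$ is smooth and, by convexity of $H_i$ in $(w,p)$ together with the continuity of $b_i,l_i$, it satisfies $H_i(x,u_\eps,Du_\eps)\le\omega(\eps)$ with $\omega(\eps)\to0$; differentiating $s\mapsto u_\eps(\trajyali(s))e^{-\lambda s}$ and using this inequality pointwise along the trajectory shows that $\Psi$ is nondecreasing up to an $\omega(\eps)$-error, and one lets $\eps\to0$ via the uniform convergence $u_\eps\to u$. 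The delicate point is that, on $[\tau_i,\theta_i]$, $\trajyali$ may slide on $\H$, where mollification would pick up values of $u$ from $\Omega_j$. To handle this I would invoke the controllability [H2]: since $\overline{B(0,\delta)}\subset B_i(x)$, adding an inward drift lets one approximate $\trajyali$ by trajectories $Y^\rho$ staying at positive distance from $\H$ on all of $[0,\sigma]$; the interior monotonicity then applies to $Y^\rho$ (choosing $\eps$ small relative to that distance), and one lets $\rho\to0$ using the continuity of $u$ and $l_i$ and the continuous dependence of trajectories. Taking the infimum over all $(\alpha_i,\theta_i)$ yields \eqref{reverse:sub}.

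The supersolution inequality \eqref{reverse:super} is the genuinely harder, super-optimality direction, where one must \emph{produce} a near-optimal control. Here I may choose $\theta_i$ as a first passage through $\H$, so that $\sigma=t\wedge\theta_i$ keeps the trajectory inside the open set $\Omega_i$ on $[0,\sigma)$ and the boundary-sliding difficulty does not arise. Given $\eps>0$, the goal is to build $\alpha_i(\cdot)$ with
\[
\int_0^\sigma l_i(\trajyali(s),\alpha_i(s))\,e^{-\lambda s}\,ds+v(\trajyali(\sigma))\,e^{-\lambda\sigma}\le v(x_0)+\eps\,.
\]
I would do this by time-discretization: partition $[0,t]$ into steps of size $h$, and on each step use the supersolution inequality $H_i\ge0$ — which, at the current position, guarantees a control value $\alpha_i$ with $-b_i\cdot Dv+\lambda v-l_i\ge-\eta$, i.e. a descent direction for $\Psi$ — together with a measurable selection to define $\alpha_i(\cdot)$ on that interval. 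One then checks that the resulting $\Psi$ is almost nonincreasing, so $\Psi(\sigma)\le v(x_0)+\eps$, the lower semicontinuity of $v$ being used when passing to the limit in the terminal term and in $h\to0$, $\eta\to0$. Stopping $\theta_i$ at the first passage through $\H$ (reachable thanks to [H2], and beyond time $t$ this does not affect the truncated integral) gives an admissible stopping time, and the resulting quantity is $\ge\inf_{\alpha_i,\theta_i}[\cdots]$, proving \eqref{reverse:super}.

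I expect the main obstacle to be this super-optimality construction: the supersolution property only delivers information through test functions at contact points, so turning it into an actual almost-optimal control requires the discretization-plus-measurable-selection argument with careful control of the accumulated errors, all while keeping the constructed trajectory inside $\overline{\Omega}_i$ up to $\theta_i$. A cleaner alternative, if available at this stage, would be to identify $v$ as lying above the value function of the stopped exit-time problem in $\Omega_i$ and to invoke the exact dynamic programming principle for the latter; absent a comparison result, however, this is essentially equivalent to the direct construction above.
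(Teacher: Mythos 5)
Your overall architecture is right (sub-optimality must hold along \emph{every} admissible pair, super-optimality requires producing a near-optimal control, and everything reduces to the one-sided exit-time problem in $\Omega_i$), but the supersolution half \eqref{reverse:super} contains a genuine gap. Your construction extracts, ``at the current position'', a control value $\alpha_i$ with $-b_i\cdot Dv+\lambda v-l_i\ge-\eta$ from the inequality $H_i\ge 0$. For a merely lsc supersolution $v$ there is no $Dv$ to speak of, and the viscosity inequality only holds at points where $v$ is touched from below by a test function --- a set that can a priori miss the trajectory you are trying to build. Turning $H_i(x,v,Dv)\ge0$ into an actual $\eps$-optimal control for an lsc $v$ is precisely the super-optimality (or ``minimal supersolution'') principle, and it is a substantive theorem, not a routine discretization-plus-measurable-selection exercise: the standard proofs go through sup-convolutions, viability/weak invariance arguments, or a characterization of the value function as the minimal supersolution of the associated Dirichlet problem. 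The paper does exactly the latter: it introduces the exit-time Dirichlet problem \eqref{ET-DP} with data $\chi=v$, observes that $v$ is a supersolution of it, and invokes the results of Blanc \cite{Bl1,Bl2} identifying the right-hand side of \eqref{reverse:super} as the \emph{minimal} supersolution of that problem (Assumption [H2] disposing of relaxed controls). Your closing remark dismisses this route as ``essentially equivalent to the direct construction'', but that is the point: someone must prove the super-optimality principle, and your proposal neither proves it nor cites it, so as written \eqref{reverse:super} is not established.

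The subsolution half is in better shape and is a genuinely different route from the paper's. You mollify $u$, use convexity of $H_i$ to get $H_i(x,u_\eps,Du_\eps)\le\omega(\eps)$, and integrate along the trajectory; the paper instead notes that the right-hand side of \eqref{reverse:sub} is a supersolution of \eqref{ET-DP} with $\chi=u$ and applies the Barles--Perthame comparison result \cite{BP2}, which is why the Lipschitz continuity of $u$ is emphasized there. Your approach is more self-contained but forces you to confront the interval $[\tau_i,\theta_i]$ on which $\trajyali$ may slide along $\H$, where mollification across the interface only yields $\min(H_1,H_2)\le0$ rather than $H_i\le0$. Your proposed fix (an inward perturbation of the dynamics using $\overline{B(0,\delta)}\subset B_i(x)$ from [H2]) is the right idea, but note that the naive estimate $(Y^\rho)_N(s)\ge Y_N(s)+\rho\delta s-(\text{error})$ does not come for free since $b_{i}\cdot e_N$ may be large and negative away from the sliding set; you need the Soner-type Gronwall estimate, carried out with the convex-combination controls guaranteed by [H2], and then the continuity of $u$ and of $l_i$ to pass to the limit $\rho\to0$. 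If you supply that estimate, this half stands; the supersolution half needs either the full super-optimality construction or the citation the paper uses.
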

\begin{proof}
For $\chi = u$ or $v$, we consider  the exit time-Dirichlet problems
\begin{equation}\label{ET-DP}
\left\{
\begin{array}{rl}
w_t + H_i(x,w,Dw)=0 &  \hbox{ in  }
 \Omega_i \times (0,+\infty) \\
w(x,0) = \chi(x) &  \hbox{ on  } \overline{\Omega}_i \\
w(x,t) = \chi (x) &  \hbox{ on  } \partial\Omega_i \times (0,+\infty)\; .
\end{array}
\right.
\end{equation}
The proofs of (\ref{reverse:super}) and (\ref{reverse:sub}) are slightly different. Property (\ref{reverse:super}) directly follows from the results of Blanc \cite{Bl1,Bl2} since $v$ is a supersolution of (\ref{ET-DP}) (with $\chi = v$) while the right hand-side is the formula for the minimal supersolution (and solution) of this problem. It is worth pointing out that (i) we do not need relaxed controls because of Assumption [H2] since, in our case, relaxed controls coincide with usual $L^\infty$ controls and (ii) the results in \cite{Bl1,Bl2} are obtained in bounded domains but they can easily be extended to unbounded domains.

For (\ref{reverse:sub}), the right-hand side of the inequality is a supersolution of (\ref{ET-DP}) (with $\chi = u$) while $u$ is a subsolution. The comparison in $\Omega_i$ of these sub and supersolution follows from the result in Barles and Perthame \cite{BP2} (see also \cite{Ba}) because $u$ is (Lipschitz) continuous. The continuity of $u$ is a key point in the comparison property.
\end{proof}

We prove now a property fulfilled by a supersolution $v$ of \eqref{Bellman-Om}-\eqref{Bellman-H-sub}-\eqref{Bellman-H-sup} which
highlights the alternative between, roughly speaking: $(i)$ there exist an optimal strategy consisting in entering one of the sets
$\Omega_1/\Omega_2$; $(ii)$ the optimal strategies consist in staying on $\H$ for a while.

\begin{theo}  \label{teo:condplus}
Assume [H0], [H1] and [H2]. Let $v:\R^N \ds \R$ be a lsc supersolution of  \eqref{Bellman-Om}-\eqref{Bellman-H-sub}-\eqref{Bellman-H-sup}.
Let $\phi\in C^1(\R^{N-1})$ and $x'_0$ be a minimum point of $x' \mapsto v(x',0)-\phi(x')$.
 Then, either  \\[2mm]
{\bf A)} there exist $\eta >0$, $i=1$ or $2$ and a sequence $x_k \in \overline{\Omega}_i \rightarrow x_0=(x_0^\prime,0)$ such that
$v(x_k) \ds v(x_0)$ and, for each $k$, there exists  a control $\alpha_i^k(\cdot)$ such that the corresponding trajectory
$\trajyxki(s) \in \overline{\Omega}_i $ for all $s \in [0,\eta]$
and
\be\label{condA}
v(x_k) \geq
   \int_0^{\eta} l_i(\trajyxki(t), \alpha^k_i(t)) e^{-\lambda t} dt + v(\trajyxki(\eta))  e^{-\lambda \eta}
\ee
or \\[2mm]
{\bf B)} it holds
\be  \label{condhyperSUP}
\HT(x_0,v(x_0),D_\H\phi(x_0^\prime)) \geq 0.
\ee
\end{theo}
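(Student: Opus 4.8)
We may add a constant to $\phi$ — which changes neither $D_\H\phi$ nor the two alternatives — so as to assume that the minimum value is $0$, that is $v(x_0)=\phi(x_0')$ and $v(x',0)\geq\phi(x')$ for $x'$ near $x_0'$, with $x_0=(x_0',0)$. The plan is to establish the alternative by assuming that \textbf{B)} fails and deducing \textbf{A)}. So suppose $\HT(x_0,v(x_0),D_\H\phi(x_0'))<0$. Since $A_0(x_0)=\{\a\in A:\ b_\H(x_0,\a)\cdot e_N=0\}$ is a closed subset of the compact set $A=A_1\times A_2\times[0,1]$ and $b_\H,l_\H$ are continuous, the supremum defining $\HT$ is attained and there is $\sigma>0$ with
$$-b_\H(x_0,\a)\cdot(D_\H\phi(x_0'),0)+\lambda v(x_0)-l_\H(x_0,\a)\leq-\sigma\qquad\text{for all }\a\in A_0(x_0)\,.$$
Heuristically this says that every purely tangential strategy is strictly sub-optimal for $v$, so the (super-)optimal behaviour of $v$ must enter one of the open half-spaces, which is exactly the content of \textbf{A)}.

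To exploit the supersolution property I use the reverse super-optimality principle of Lemma~\ref{reverse}, available in each open half-space. For each side $i\in\{1,2\}$ I pick base points $x_k\in\Omega_i$ with $x_k\to x_0$ and $v(x_k)\to v(x_0)$ (the existence of such a value-recovering sequence for the merely lsc function $v$ is discussed below). By the convexity of the images in [H2] the infimum in \eqref{reverse:super} is attained, so Lemma~\ref{reverse} furnishes, for each $k$, an optimal control $\alpha_i^k(\cdot)$ and an optimal stopping time $\theta_i^k\in[\tau_i,\bar\tau_i]$ such that, for every $t\ge0$,
$$v(x_k)=\int_0^{t\wedge\theta_i^k} l_i(\trajyxki(s),\alpha_i^k(s))e^{-\lambda s}\,ds+v\big(\trajyxki(t\wedge\theta_i^k)\big)e^{-\lambda(t\wedge\theta_i^k)}\,.$$

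I then split according to the behaviour of the optimal exit times $\theta_i^k$. If, for some $i$ and along a subsequence, $\liminf_k\theta_i^k=:\eta>0$, then the optimal trajectory $\trajyxki$ stays in $\overline{\Omega}_i$ on $[0,\eta]$; choosing $t=\eta\le\theta_i^k$ in the identity above gives precisely \eqref{condA}, and since $x_k\in\overline{\Omega}_i\to x_0$ with $v(x_k)\to v(x_0)$, alternative \textbf{A)} holds. Otherwise $\theta_i^k\to0$ for both $i=1,2$: the optimal controls drive the trajectories from the $x_k$ back to $\H$ in vanishing time, which forces, in the limit $k\to\infty$ (Arzel\`a--Ascoli on the equi-Lipschitz trajectories, convexity of [H2] to pass to the limit on the controls), a control $\alpha_1^*$ with $b_1(x_0,\alpha_1^*)\cdot e_N\le0$ on the $\Omega_1$ side and a control $\alpha_2^*$ with $b_2(x_0,\alpha_2^*)\cdot e_N\ge0$ on the $\Omega_2$ side. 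Combining them with the weight $\mu^*$ that cancels the normal velocity, exactly as in Case~5 of the proof of Theorem~\ref{teo:HJ}, produces $\a^*=(\alpha_1^*,\alpha_2^*,\mu^*)\in A_0(x_0)$; and a first-order expansion of the super-optimality identity, in which one uses $v(\cdot,0)\ge\phi$ on $\H$ and divides by the small exit time before letting it tend to $0$, yields in the limit $-b_\H(x_0,\a^*)\cdot(D_\H\phi(x_0'),0)+\lambda v(x_0)-l_\H(x_0,\a^*)\ge0$, i.e.\ $\HT(x_0,v(x_0),D_\H\phi(x_0'))\ge0$. This contradicts the margin $\sigma$, so the second case is impossible and \textbf{A)} always holds.

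The main obstacle is precisely this ``instant return'' case: one must control the normal velocities $b_i(x_k,\alpha_i^k)\cdot e_N$ along the vanishing-time trajectories, extract admissible limiting controls, and reassemble from the two one-sided data a single admissible tangential control $\a^*\in A_0(x_0)$ that saturates $\HT$ — this is where the convexity of the images and the normal controllability $\overline{B(0,\delta)}\subset B_i(x)$ of [H2] are indispensable, just as in Theorem~\ref{teo:HJ}. A secondary technical point is the construction of the value-recovering sequence $x_k\to x_0$ with $v(x_k)\to v(x_0)$: this is where the lower semicontinuity of $v$ (rather than continuity) is felt, and it is exactly why alternative \textbf{A)} is stated along a sequence rather than at $x_0$ itself.
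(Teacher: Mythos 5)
There is a genuine gap in your ``instant return'' case, and it sits exactly at the heart of the theorem. Your base points $x_k\in\Omega_i$ are only value-recovering ($v(x_k)\to v(x_0)$); they are not minimum points of $v$ minus a test function over a full $\R^N$-neighbourhood. Consequently you have no inequality of the form $v(y)-v(x_k)\ge\psi(y)-\psi(x_k)$ to insert into the super-optimality identity, and the ``first-order expansion'' cannot be justified: the trajectory travels a normal distance $(x_k)_N$ of the same order as $\theta_i^k$, so after dividing by $\theta_i^k$ an uncontrolled normal component of the gradient survives (off $\H$, the relation $v\ge\phi$ gives you nothing). The inequality one can legitimately obtain in the limit has the form $H_i\big(x_0,v(x_0),D_\H\phi(x_0')+\delta_i e_N\big)\ge0$ for some normal slope $\delta_i$ depending on the side, and to recombine the two sides into a single tangential inequality you need $\delta_1=\delta_2$, which nothing in your construction guarantees. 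This is precisely what the paper's proof arranges: it perturbs the test function by $\delta x_N-x_N^2/\eps^2$, studies the convex coercive function $\varphi(\delta)=\max(H_1,H_2)\big(x_0,v(x_0),D_\H\phi(x_0')+\delta e_N\big)$, and extracts the convex combination defining $\HT$ from the condition $0\in\partial\varphi(\bar\delta)$ at a minimiser $\bar\delta$; your use of Lemma~\ref{reverse} with optimal controls and vanishing exit times corresponds only to the paper's second case, whose role is to rule out $\varphi(\bar\delta)<0$ when {\bf A)} fails.

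There is also a structural reason the argument cannot be repaired as written: since your $x_k$ lie in the open half-spaces, the limiting controls you extract are automatically regular ($b_1\cdot e_N\le0$ on the $\Omega_1$ side, $b_2\cdot e_N\ge0$ on the $\Omega_2$ side), so your conclusion would be $\HTreg\big(x_0,v(x_0),D_\H\phi(x_0')\big)\ge0$, which is strictly stronger than {\bf B)}. That stronger alternative is false for general supersolutions: in the ``pull-pull'' example of Subsection~\ref{subsec:pull}, $v=\VFm$ at $x_0=0$ satisfies neither {\bf A)} (which would force $\VFm(0)\ge\min\big(\VF_\mathrm{SC1}(0),\VF_\mathrm{SC2}(0)\big)>0$, whereas $\VFm(0)=0$) nor $\HTreg\ge0$ (since $\HTreg(0,\VFm(0),\cdot)=\lambda\big(\VFm(0)-u_\H^{\mathrm{reg}}(0)\big)=-1<0$ for $\lambda=1$), although it does satisfy $\HT\ge0$. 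Had your version been correct, the comparison argument of Theorem~\ref{Uni-dim-N} would only require the $\HTreg$ subsolution inequality of Theorem~\ref{teo:sotto} and would yield uniqueness for \eqref{Bellman-Om}-\eqref{Bellman-H-sub}-\eqref{Bellman-H-sup} alone, contradicting the non-uniqueness established in Section~\ref{sect:oneD}. The convex combination produced by the paper's subdifferential argument can be singular, and that is exactly why {\bf B)} is stated with $\HT$ rather than $\HTreg$.
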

\begin{proof}
We are going to prove that, if {\bf A)} does not hold, then we have {\bf B)}. To do so, we first remark that, changing $\phi (x')$ in $\phi (x') -|x' - x_0^\prime|^2$, we can assume that $x_0^\prime$ is a {\em strict} local minimum point of $v(x',0)-\phi(x')$. We then  define the function  $\tilde{\phi}(x',x_N):=\phi(x')$ in order to have  $D\tilde{\phi}(x_0)=(D_\H \phi(x'_0),0)$.

For $\eps >0$ which is devoted to tend to $0$, we consider the function
$$
  v(x',x_N)-\tilde{\phi}(x',x_N)-  \delta x_N +\frac{x_N^2}{\eps ^ 2}
$$
where $\delta \in \R$ will be chosen below. By standard arguments, this function achieves its minimum near $x_0$ and,
any sequence of such minimum points $x_\eps$  converges to $x_0=(x_0^\prime,0)$.\\
{\bf $\mathbf 1^{st}$ case.}
{\it Let us first suppose that, for all $\delta\in \R$, the minimum is attained at a point $x_\eps$ of the hyperplane $\H$.} Thus, because $x_0^\prime$ is a {\em strict} local minimum point of $v(x',0)-\phi(x')$, then
 $x^\eps=(x'_0,0)=x_0$ and, since $v$ is a supersolution of \eqref{eq:HJ}, we have
 \be   \label{def:phi}
 \varphi(\delta):= \max\{ H_1(x_0,v(x_0),D_\H\phi(x_0^\prime)+ \delta e_N),  H_2(x_0,v(x_0),D_\H\phi(x_0^\prime)+ \delta e_N)\} \geq 0
 \ee
for any $\delta \in \R$. Notice that, for the sake of simplicity of notations, we have written $D_\H\phi(x_0^\prime)+\delta e_N$ instead of $(D_\H\phi(x_0^\prime),\delta)$.

The function $\varphi(\cdot)$ defined in such a way is convex and coercive (since $H_1$, $H_2$ are convex and coercive) and,  if  $\bar{\delta}$ is a minimum point of $\varphi$, we have $0 \in \partial \varphi (\bar{\delta})$.

By a classical result (see the book of Rockafellar \cite{R}), since $\varphi$ is expressed in terms of supremum of quantities like
$$- b_1(x_0,\aluno) \cdot ( D_\H\phi(x_0^\prime)+ \delta e_N) +\lambda v(x_0)-l_1(x_0,\alpha_1)\; ,$$
$$ - b_2(x_0,\aldue) \cdot ( D_\H\phi(x_0^\prime)+ \delta e_N)  +\lambda v(x_0) -l_2(x_0,\alpha_2)\; ,$$
then any element of the subdifferential of $\varphi$, is a convex combination of the gradients of such functions (namely $- b_1(x_0,\aluno) \cdot e_N$ and $- b_2(x_0,\aldue) \cdot e_N$) for the $\aluno, \aldue$ such that
$$ \varphi(\delta) = - b_1(x_0,\aluno) \cdot ( D_\H\phi(x_0^\prime)+ \delta e_N) +\lambda v(x_0) -l_1(x_0,\alpha_1)\; ,$$
$$ \varphi(\delta)= - b_2(x_0,\aldue) \cdot ( D_\H\phi(x_0^\prime)+ \delta e_N) +\lambda v(x_0)-l_2(x_0,\alpha_2)\; ,$$
i.e. for the $\aluno, \aldue$ for which the maximum is achieved.

Taking [H2] into account and looking at the meaning of these properties at the point $\bar{\delta}$, we see that it can be reduced to:
there exists a $\mu \in [0,1]$ such that
\begin{equation*}
    \begin{aligned}
    0 & = \mu (b_1(x_0,\alpha_1) \cdot e_N)+ (1-\mu)( b_2(x_0,\alpha_2 ) \cdot e_N)\,,\quad\text{and}\\
    0 \leq  \varphi(\bar{\delta}) & = \mu \big\{ - b_1(x_0,\aluno) \cdot ( D_\H\phi(x_0^\prime)+ \bar{\delta}e_N) +\lambda v(x_0)-l_1(x_0,\alpha_1)   \big \}     \\
     & + (1-\mu)  \big\{ - b_2(x_0,\aldue) \cdot ( D_\H\phi(x_0^\prime)+ \bar{\delta}  e_N)+\lambda v(x_0) -l_2(x_0,\alpha_2)  \big\} .
    \end{aligned}
\end{equation*}
Inequality  \eqref{condhyperSUP} easily follows. \\
{\bf $\mathbf 2^{nd}$ case.} As a consequence of the arguments which are used in Case~1, we see that, if $\varphi(\bar{\delta})\geq 0$ where, as above, $\bar{\delta}$ is a global minimum point of $\varphi$, we are done. Hence we may assume $\varphi( \bar{\delta} ) <0$.

We consider the function
$$\psi_{\eps}(x):= \tilde{\phi}(x)+\bar{\delta}x_N-\frac{x_N^2}{\eps ^ 2}$$
and denote by $x_{\eps}$ a minimum point of $v-\psi_{\eps}$.

Since $\varphi( \bar{\delta} ) <0$, $x_{\eps}$ cannot be on $\H$. Therefore we  can apply Lemma~\ref{reverse} which gives, for any $t>0$
 \be \label{disveps}
v(x_{\eps}) \geq
 \inf_{\alpha_i(\cdot),\theta_i}
\,\biggl[\int_0^{t \wedge\theta_i} l_i(\trajyxei(s),\alpha_i(s))
e^{-\lambda s}ds
+v(  \trajyxei(t \wedge\theta_i))e^{-\lambda( t \wedge\theta_i)}\biggr]\; ,
\ee
where we denote by  $\trajyxei(\cdot)$  the solution of the ode    \eqref{def:trajY} starting from  $x_{\eps}$ at time $0$.

Because of [H2], the infimum in \eqref{disveps}, say for $t=1$, is attained for some $\alpha^\eps_i(\cdot)$ and $\theta^\eps_i >0$, namely
\be
v(x_{\eps}) \geq
\,\biggl[\int_0^{1\wedge\theta^\eps_i} l_i( \trajyxei(s),\alpha^\eps_i(s))
e^{-\lambda s}ds
+v(\trajyxei(1\wedge\theta^\eps_i))e^{-\lambda {(1\wedge\theta^\eps_i)}}\biggr]\;.
\ee
Moreover, recalling that we are assuming that ${\bf A)}$ does not hold, we have that $\theta^\eps_i \to 0$ as $\eps \to 0$.

But using that $x_{\eps}$ is a local minimum point of $v(x)-\psi_{\eps}(x)$ we deduce that, for $\eps$ small enough
\be  \label{dispsi}
0 \geq \int_0^{\theta^\eps_i} l_i(\trajyxei(s),\alpha_i(s)) e^{-\lambda s}ds+ v(\trajyxei(\theta^\eps_i))(e^{-\lambda \theta^\eps_i} -1) +
\psi_{\eps}( \trajyxei(\theta^\eps_i))-\psi_{\eps}(x_{\eps}).
\ee
Next we remark that, since by definition $(\trajyxei(\theta^\eps_i))_N = 0$, we can drop the quadratic term in $\psi_{\eps}$.
Indeed
$$
\frac{(\trajyxei(\theta^\eps_i))_N^2}{\eps ^ 2}-
\frac{(x_{\eps})_N^2}{\eps^ 2} \leq 0\; .$$
 Therefore, inequality \eqref{dispsi} becomes
\begin{multline}
0 \geq \int_0^{\theta^\eps_i}  l_i(\trajyxei(s),\alpha^\eps_i(s)) e^{-\lambda s}ds+ v(\trajyxei(\theta^\eps_i))(e^{-\lambda \theta^\eps_i} -1) + \\
 \int_0^{\theta^\eps_i} \biggl[   D_\H\phi(\trajyxei(s)) + \bar{\delta}   \cdot e_N   \biggr]  \cdot b_i(\trajyxei(s),\alpha^\eps_i(s))ds,
\end{multline}
thus
\begin{multline}
0 \leq \int_0^{\theta^\eps_i}  \bigg\{ - l_i(\trajyxei(s),\alpha^\eps_i(s)) e^{-\lambda s} - \biggl[   D_\H\phi(\trajyxei(s)) + \bar{\delta}   \cdot e_N   \biggr]  \cdot b_i(\trajyxei(s),\alpha^\eps_i(s))  \bigg\}ds     \\ -  v(\trajyxei(\theta^\eps_i))(e^{-\lambda \theta^\eps_i} -1)   \leq \\
 \leq \int_0^{\theta^\eps_i}   \sup_{\alpha^\eps_i \in A} \bigg\{ - l_i(\trajyxei(s),\alpha^\eps_i) e^{-\lambda s} - \biggl[   D_\H\phi(\trajyxei(s)) + \bar{\delta}   \cdot e_N   \biggr]  \cdot b_i(\trajyxei(s),\alpha^\eps_i)  \bigg\}ds  \\ -  v(\trajyxei(\theta^\eps_i))(e^{-\lambda \theta^\eps_i} -1) .
\end{multline}
If we divide now by $\theta^\eps_i$ and let $\eps$ tend to $0$, we obtain by usual arguments
$$\begin{aligned}
0 &\leq  \sup_{\alpha_i \in A} \{  - l_i(x_0,\alpha_i) - ( D_\H\phi(x_0^\prime) + \bar{\delta}   \cdot e_N   )  \cdot
b_i(x_0,\alpha_i)  \} + \lambda v(x_0) \\
 &=H_i(x_0, v(x_0), D_\H\phi(x_0^\prime) + \bar{\delta}   \cdot e_N ),
\end{aligned}$$
which is a contradiction, so that the proof is complete.
\end{proof}


\section{A uniqueness result.}\label{sec:uni}

\begin{theo}  \label{Uni-dim-N}
Assume [H0], [H1] and [H2]. Let $u$ be a bounded, Lipschitz continuous subsolution of \eqref{Bellman-Om}-\eqref{Bellman-H-sub}-\eqref{Bellman-H-sup}-\eqref{Bellman-H} and $v$ be a bounded, lsc supersolution of \eqref{Bellman-Om}-\eqref{Bellman-H-sub}-\eqref{Bellman-H-sup}. Then $u \leq v $ in $\R^N$.
\end{theo}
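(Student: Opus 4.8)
The plan is to split the proof into a reduction step, which transfers the inequality from $\R^N$ onto the interface $\H$, and a genuinely interfacial step, where the extra subsolution property \eqref{Bellman-H} of $u$ is confronted with the supersolution alternative of Theorem~\ref{teo:condplus} for $v$ by a doubling of variables carried out only in the tangential variables. Throughout I set $M:=\sup_{\R^N}(u-v)$ and $m:=\sup_{\H}(u-v)$, and I argue by contradiction assuming $M>0$; the goal is to show that then necessarily $m=M$, and to rule this out.

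For the reduction, fix $x_0\in\Omega_i$. Applying Lemma~\ref{reverse} to $v$ through \eqref{reverse:super} and to $u$ through \eqref{reverse:sub}, I would use that, by the compactness and convexity in [H2], the infimum in \eqref{reverse:super} is attained for some $(\bar\alpha_i,\bar\theta_i)$; evaluating the (reverse) subsolution inequality for $u$ at this same pair (which is licit since \eqref{reverse:sub} holds for \emph{every} admissible choice) and subtracting, I obtain, after letting the horizon tend to $+\infty$ so that the discount kills the boundary term when $\bar\theta_i=+\infty$,
$$u(x_0)-v(x_0)\le\big[u(\trajyali(\bar\theta_i))-v(\trajyali(\bar\theta_i))\big]\,e^{-\lambda\bar\theta_i}\le\max\big(m,0\big),$$
because $\trajyali(\bar\theta_i)\in\H$ whenever $\bar\theta_i<+\infty$. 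Hence $\sup_{\Omega_i}(u-v)\le\max(m,0)$ for $i=1,2$, so that $M\le\max(m,0)$; since $m\le M$, the assumption $M>0$ forces $m=M>0$, and it remains to contradict this.

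Now I would work on $\H$. After a standard localization (subtracting a vanishing penalization $\gamma(|x'|^2+|y'|^2)$ to recover a maximum on the noncompact $\R^{N-1}$), double the tangential variables by maximizing $(x',y')\mapsto u(x',0)-v(y',0)-|x'-y'|^2/\eps^2$ at some $(\bar x',\bar y')$, with the usual estimates $u(\bar x',0)-v(\bar y',0)\to m$ and $|\bar x'-\bar y'|^2/\eps^2\to0$. Writing $p_\eps:=2(\bar x'-\bar y')/\eps^2$, $\bar x=(\bar x',0)$, $\bar y=(\bar y',0)$, the subsolution inequality \eqref{Bellman-H} gives $\HT(\bar x,u(\bar x),p_\eps)\le0$, while Theorem~\ref{teo:condplus} applied to $v$ at $\bar y'$ offers two possibilities. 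In case {\bf B)} one has $\HT(\bar y,v(\bar y),p_\eps)\ge0$; subtracting the two inequalities and exploiting the structure of $\HT$ yields $\lambda\,(u(\bar x)-v(\bar y))\le C|\bar x-\bar y|\,|p_\eps|+\omega(|\bar x-\bar y|)\to0$, hence $\lambda m\le0$, a contradiction. In case {\bf A)}, $v$ satisfies \eqref{condA} along a trajectory $\trajyxki$ remaining in $\overline\Omega_i$ over $[0,\eta]$; testing the continuous subsolution $u$ along the \emph{same} trajectory (the sub-optimality principle in $\overline\Omega_i$) gives $u(x_k)\le\int_0^{\eta}l_i\,e^{-\lambda t}\,dt+u(\trajyxki(\eta))e^{-\lambda\eta}$, and subtracting from \eqref{condA} leads to $u(x_k)-v(x_k)\le[u(\trajyxki(\eta))-v(\trajyxki(\eta))]e^{-\lambda\eta}\le m\,e^{-\lambda\eta}$. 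Letting $k\to\infty$ (so that $x_k\to\bar y$, $v(x_k)\to v(\bar y)$ and $u(x_k)\to u(\bar y)$) and then $\eps\to0$, the left-hand side tends to $m$, whence $m(1-e^{-\lambda\eta})\le0$ and again $m\le0$, a contradiction. Both cases being impossible, $M\le0$ and $u\le v$ on $\R^N$.

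The main obstacle is the confrontation of the two tangential Hamiltonians in case {\bf B)}: since the admissible set $A_0(x)=\{\a:\,b_\H(x,\a)\cdot e_N=0\}$ depends on the base point, a control $\a^*\in A_0(\bar y)$ realizing $\HT(\bar y,v(\bar y),p_\eps)\ge0$ need not belong to $A_0(\bar x)$. I would circumvent this by retaining $(\alpha_1^*,\alpha_2^*)$ and adjusting only the weight $\mu$ to a nearby $\tilde\mu\in[0,1]$ enforcing $b_\H(\bar x,\tilde\a)\cdot e_N=0$; the convexity and controllability in [H2] make this possible with $|\tilde\mu-\mu^*|=O(|\bar x-\bar y|)$ (the degenerate endpoints $\mu^*\in\{0,1\}$ being handled as in \textsc{Case 5} of the proof of Theorem~\ref{teo:HJ}), so that $b_\H$ and $l_\H$ vary only by $O(|\bar x-\bar y|)$, exactly what the estimate above requires. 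A second delicate point, closely tied to the dichotomy of Theorem~\ref{teo:condplus}, is that the excursion time $\eta$ in case {\bf A)} must be controlled as $\eps\to0$: should it degenerate, the passage to the limit in \eqref{condA} regenerates the tangential supersolution inequality \eqref{condhyperSUP}, bringing us back to case {\bf B)}. It is precisely the availability of the \emph{full} inequality \eqref{Bellman-H} for $u$—and not merely $\HTreg\le0$—that makes case {\bf B)} close, which is why this property is indispensable for uniqueness.
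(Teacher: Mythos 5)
Your overall architecture is close in spirit to the paper's: reduce to the interface, then confront \eqref{Bellman-H} for $u$ with the alternative of Theorem~\ref{teo:condplus} for $v$, closing case {\bf A)} with Lemma~\ref{reverse} and case {\bf B)} with the tangential Hamiltonian. The reduction step (via Lemma~\ref{reverse}, giving $M\le\max(m,0)$) is a correct, and even slightly cleaner, substitute for the paper's localization. But the paper deliberately does \emph{not} double the tangential variables: it mollifies $u$ in $x'$ (Lemma~\ref{convol}) and adds a small multiple of the coercive subsolution $-|x|^2-M$ (Lemma~\ref{soussol-inf}) so that $\max_{\R^N}(u_{\eps,\mu}-v)$ is \emph{attained} at some $\bar{x}\in\H$, and both inequalities are then evaluated at the single point $\bar{x}$ with the single gradient $D_\H u_{\eps,\mu}(\bar{x})$. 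Your tangential doubling creates two gaps that this device is precisely designed to avoid.

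First, in case {\bf B)} you must compare $\HT(\bar{x},\cdot,p_\eps)$ and $\HT(\bar{y},\cdot,p_\eps)$ at two different base points, and the admissible set $A_0(x)$ is $x$-dependent. Your fix --- keep $(\alpha_1^*,\alpha_2^*)$ and move only $\mu$ --- does not give $|\tilde\mu-\mu^*|=O(|\bar{x}-\bar{y}|)$ in general: if $b_1(\bar{y},\alpha_1^*)\cdot e_N=\delta$ and $b_2(\bar{y},\alpha_2^*)\cdot e_N=-\delta$ with $0<\delta\ll|\bar{x}-\bar{y}|$, the formula for $\tilde\mu$ has a denominator of order $\delta$, and at $\bar{x}$ both normal components may even have the same sign, so that \emph{no} $\mu\in[0,1]$ is admissible for that pair of controls. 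A correct argument must perturb the controls themselves using the full strength of [H2] (a segment argument through points of $B_i(\bar{x})$ with normal component $\pm\delta$), and one must then check that the resulting perturbation of $(b_\H,l_\H)$ is still $O(|\bar{x}-\bar{y}|)$, compatible with $|p_\eps|\,|\bar{x}-\bar{y}|\to0$; this is the real content of the step and it is missing. Second, and more seriously, case {\bf A)} does not close as written: since the supremum $m$ over the noncompact set $\H$ is not attained, your inequality reads $u(\bar{y}_\eps)-v(\bar{y}_\eps)\le M e^{-\lambda\eta_\eps}$ with $\eta_\eps$ depending on $\eps$, and if $\eta_\eps\to0$ this passes to the limit as $m\le m$ --- no contradiction. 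Your proposed remedy, that a degenerating $\eta_\eps$ ``regenerates'' case {\bf B)}, is not a consequence of Theorem~\ref{teo:condplus} as stated and would require reproving its dichotomy in a limiting form. In the paper's proof this problem simply does not arise: $M_{\eps,\mu}$ is the attained global maximum, so $M_{\eps,\mu}\le M_{\eps,\mu}e^{-\lambda\eta}$ with a single fixed $\eta>0$ already forces $M_{\eps,\mu}\le0$. Unless you restore attainment of the maximum on $\H$ (which is exactly what the mollification-plus-coercive-subsolution device achieves), case {\bf A)} remains open.
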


\begin{proof} In order to justify our strategy of proof, we point out that the usual ``doubling of variables'' method, which is very classical in viscosity solutions' theory, cannot work here since if we look at a maximum of $u(x)-v(y)-\cdots$, then $x$ and $y$ can be in two different part of the domain (either $\Omega_1$ or $\Omega_2$) and we would face two completely different and therefore useless inequalities. Therefore we have to look at a maximum of $u(x)-v(x)$ and to do so, we have to (i) regularize $u$ to make it $C^1$ at least in the $x_1, \cdots, x_{N-1}$ variables and (ii) manage to turn around the difficulty of the non-compact domain.

The regularization of $u$ relies on (almost) standard arguments. We use a sequence of mollifiers
$(\rho_{\eps})_{\eps}$ defined on $\R^{N-1}$ as follows
$$ \rho_{\eps}(x)=\frac{1}{\eps^{N-1}}\rho(\frac{x}{\eps}) \; ,$$
where
$$
\rho \in C^{\infty}(\R^{N-1}), \int_{\R^{N-1}}\rho(y)dy = 1, \text{ and } {\rm supp}\{\rho\}
=B_{\R^{N-1}}(0,1).
$$
Next we consider the function $u_{\eps}$ defined in $\R^N$ by
$$
u_{\eps} (x):= \int_{\R^{N-1}}\,
u(x'-e,x_N) \rho_{\eps}(e) de\; .
$$

A key result is the
\begin{lem}
\label{convol}
There exists a function $m:(0,+\infty) \to (0,+\infty)$ with $m(0+)=0$ such that the function $u_{\eps}-m(\eps)$ is a viscosity subsolution of
\eqref{Bellman-Om}-\eqref{Bellman-H-sub}-\eqref{Bellman-H-sup}-\eqref{Bellman-H}.
\end{lem}

We skip the proof of this result which relies on standard arguments: see the book of P.L Lions \cite{L} or Barles \& Jakobsen \cite{BJ:Rate}. It is worth pointing out that it is completely standard for (\ref{Bellman-H}) which is an equation set in $\R^{N-1}$, a little bit less classical for (\ref{Bellman-Om}). We use in a crucial way the fact that $u$ is Lipschitz continuous, as a consequence of the controlability assumption [H2] (which implies that $H_1, H_2, \HT$ are coercive Hamiltonians). Of course, $m(\eps)$ is a quantity which controls the error terms through the $\lambda u$-term.

Next we have the

\begin{lem}
\label{soussol-inf}
For $M$ large enough, the function $\psi(x):=-|x|^2 - M$ is a viscosity subsolution of
\eqref{Bellman-Om}-\eqref{Bellman-H-sub}-\eqref{Bellman-H-sup}-\eqref{Bellman-H}.
\end{lem}

Again the proof is easy since the assumptions [H0]--[H2] implies that for $H=H_1, H_2, \HT$ we have
$$ H(x,t,p) \leq C_1|p| + \lambda t + C_2\; .$$
Therefore we just have to estimate $ C_1|2x| + \lambda(-|x|^2 - M) + C_2$ and the conclusion follows easily by using the
Cauchy-Schwarz inequality for the first term.

Using these lemmas, the proof of the result is easy: for $0<\mu<1$, close to $1$, we set $u_{\eps,\mu}:=\mu(u_{\eps}-m(\eps))+(1-\mu)\psi$. Notice that, by the convexity properties of $H_1, H_2, \HT$, $u_{\eps,\mu}$ is still a subsolution of (\ref{Bellman-Om})-(\ref{Bellman-H}). Moreover $u_{\eps,\mu}(x) \to -\infty$ as $|x|\to +\infty$.

Therefore we may consider $M_{\eps,\mu}:=\max_{\R^N} (u_{\eps,\mu}(x) -v(x))$ which is achieved at some point $\bar{x}\in \R^N$ and we argue by contradiction assuming that $M_{\eps,\mu}>0$.

We first remark that, necessarily, $\bar{x} \in \H$. Otherwise, we introduce the function $u_{\eps,\mu}(x) -v(x)-|x-\bar{x}|^2$ which has a strict maximum  at $\bar{x}$ and we double the variables, i.e. we consider, for $0<\beta\ll1$,
$$ (x,y)\mapsto u_{\eps,\mu}(x) -v(y)-\frac{|x-y|^2}{\beta^2}-|x-\bar{x}|^2 \; .$$
Applying readily the classical arguments and remarking that the maximum points of this function converge to $(\bar{x},\bar{x})$, we would be led to the conclusion that $u_{\eps,\mu}(\bar{x}) \leq v(\bar{x})$ and therefore $M_{\eps,\mu}\leq0$. A contradiction.

Since $\bar{x} \in \H$, we can turn to Theorem~\ref{teo:condplus}. We point out that $u_{\eps,\mu}$ is $C^1$ with respect to the $x_1, \cdots,x_{N-1}$ variables and therefore $u_{\eps,\mu}$ is both a test-function for the $v$-inequality and it satisfies the subsolution inequality in the classical sense. Either we are in the {\bf B)} case, $ H_T(\bar{x}, v(\bar{x}), D_\H u_{\eps,\mu}(\bar{x}))\geq 0$, and we conclude with a classical comparison result
that $u_{\eps,\mu}(\bar{x}) \leq v(\bar{x})$ since $$\HT(\bar{x}, u_{\eps,\mu} (\bar{x}), D_\H u_{\eps,\mu}(\bar{x}))\leq 0\; .$$

Or we are in the case {\bf A)}.  Therefore, for any $k$, we have at
$$v(x_k) \geq
   \int_0^{\eta} l_i(\trajyxki(t), \alpha^k_i(t)) e^{-\lambda t} dt + v(\trajyxki(\eta))  e^{-\lambda \eta}
$$
 and
 $$
u_{\eps,\mu}(x_k) \leq
   \int_0^{\eta} l_i(\trajyxki(t), \alpha^k_i(t)) e^{-\lambda t} dt + u_{\eps,\mu}(\trajyxki(\eta))  e^{-\lambda \eta}\; ,$$
   where the last inequality is a consequence of Lemma~\ref{reverse}.
 Subtracting these inequalities gives
$$ u_{\eps,\mu}(x_k)-v(x_k) \leq (u_{\eps,\mu}(\trajyxki(\eta))  - v(\trajyxki(\eta)) )e^{-\lambda \eta}\leq M_{\eps,\mu}e^{-\lambda \eta}\; ,$$
and letting $k$ tends to $+\infty$ yields
$$
M_{\eps,\mu} \leq M_{\eps,\mu}e^{-\lambda \eta}\; ,$$
a contradiction which proves that $M_{\eps,\mu} \leq 0$.

This means that $u_{\eps,\mu} \leq v$ in $\R^N$ and we conclude by letting first $\mu$ tend to $1$ and then $\eps$ tend to $0$.
\end{proof}

\begin{cor}  \label{sous-max-sur-min}
Assume [H0], [H1] and [H2]. \\
(i) $\VFm$ is the minimal supersolution and solution of (\ref{Bellman-Om})-(\ref{Bellman-H-sub})-(\ref{Bellman-H-sup}).\\
(ii) $\VFp$ is the maximal subsolution and solution of (\ref{Bellman-Om})-(\ref{Bellman-H-sub})-(\ref{Bellman-H-sup}).\\
(iii) There exists a unique solution of (\ref{Bellman-Om})-(\ref{Bellman-H-sub})-(\ref{Bellman-H-sup})-(\ref{Bellman-H}) and for this problem, any subsolution is below any supersolution (Strong Comparison Result).
\end{cor}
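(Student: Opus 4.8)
The plan is to derive all three statements from the comparison principle of Theorem~\ref{Uni-dim-N} together with the solution properties of $\VFm$ and $\VFp$ established in Theorem~\ref{teo:HJ}: both value functions are bounded, Lipschitz and solve \eqref{Bellman-Om}-\eqref{Bellman-H-sub}-\eqref{Bellman-H-sup}, $\VFm$ moreover satisfies \eqref{Bellman-H} (i.e. $\HT\le0$ on $\H$), and $\VFp$ satisfies only the weaker tangential inequality $\HTreg\le0$ on $\H$. I would dispatch (iii) and (i) first, since both follow at once from Theorem~\ref{Uni-dim-N}. For (iii), existence is free because $\VFm$ solves the full problem \eqref{Bellman-Om}-\eqref{Bellman-H-sub}-\eqref{Bellman-H-sup}-\eqref{Bellman-H}; for the strong comparison, if $u$ is a bounded subsolution and $v$ a bounded lsc supersolution of the full problem, then $u$ is automatically Lipschitz (the Hamiltonians being coercive under [H2]) and $v$ is in particular a supersolution of the reduced problem, so Theorem~\ref{Uni-dim-N} gives $u\le v$; uniqueness follows by exchanging the roles of two solutions. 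For (i), applying Theorem~\ref{Uni-dim-N} with $u=\VFm$ against an arbitrary bounded lsc supersolution $v$ of the reduced problem yields $\VFm\le v$; as $\VFm$ is itself a supersolution, it is the minimal supersolution, hence the minimal solution.

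The delicate statement is (ii), which does \emph{not} follow directly from Theorem~\ref{Uni-dim-N}: a generic subsolution $u$ of the reduced problem satisfies only $\HTreg\le0$ on $\H$ (Theorem~\ref{teo:sotto}), not the stronger condition $\HT\le0$ demanded there. My plan is to rerun the comparison scheme of Theorem~\ref{Uni-dim-N}, but pairing a generic subsolution $u$ with the specific supersolution $\VFp$ and replacing $\HT$ by $\HTreg$ throughout. Concretely, I would mollify $u$ in the tangential variables to get $u_\eps$ and set $u_{\eps,\mu}:=\mu(u_\eps-m(\eps))+(1-\mu)\psi$ with $\psi$ as in Lemma~\ref{soussol-inf}; by the $\HTreg$-analogues of Lemmas~\ref{convol} and \ref{soussol-inf}, which hold by the same convexity arguments, $u_{\eps,\mu}$ is a subsolution of \eqref{Bellman-Om}-\eqref{Bellman-H-sub}-\eqref{Bellman-H-sup} satisfying $\HTreg\le0$ on $\H$, with $u_{\eps,\mu}(x)\to-\infty$ as $|x|\to+\infty$. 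Assuming $M_{\eps,\mu}:=\max_{\R^N}(u_{\eps,\mu}-\VFp)>0$, attained at $\bar x$, the doubling-of-variables step (using that $\VFp$ is a supersolution in each $\Omega_i$) forces $\bar x\in\H$; since $u_{\eps,\mu}$ is $C^1$ in $x_1,\dots,x_{N-1}$, its tangential restriction is an admissible test function for $\VFp$, so I may invoke Theorem~\ref{teo:condplus.VFp} at $\bar x$.

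The two branches of that alternative then close the argument. In case \textbf{B)} one gets $\HTreg(\bar x,\VFp(\bar x),D_\H u_{\eps,\mu}(\bar x))\ge0$, while $u_{\eps,\mu}$ being a subsolution of $\HTreg\le0$ gives $\HTreg(\bar x,u_{\eps,\mu}(\bar x),D_\H u_{\eps,\mu}(\bar x))\le0$; subtracting and using that $\HTreg$ depends on its middle argument only through the term $\lambda u$ with $\lambda>0$ yields $\VFp(\bar x)\ge u_{\eps,\mu}(\bar x)$, i.e. $M_{\eps,\mu}\le0$, a contradiction. In case \textbf{A)} there is a control sending $\trajyali$ into $\Omega_i$ on $]0,\eta]$ with $\VFp(\bar x)\ge\int_0^{\eta} l_i(\trajyali(t),\alpha_i(t))e^{-\lambda t}\,dt+\VFp(\trajyali(\eta))e^{-\lambda\eta}$; applying the reverse inequality \eqref{reverse:sub} of Lemma~\ref{reverse} to $u_{\eps,\mu}$ along the same trajectory and subtracting gives $M_{\eps,\mu}\le M_{\eps,\mu}e^{-\lambda\eta}$, again a contradiction. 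Hence $u_{\eps,\mu}\le\VFp$, and letting $\mu\to1$ then $\eps\to0$ gives $u\le\VFp$; since $\VFp$ is itself a subsolution, it is the maximal subsolution and the maximal solution. The only real obstacle I anticipate is precisely this transcription around $\HTreg$ and $\VFp$ — in particular verifying that mollification and the penalization $\psi$ preserve the inequality $\HTreg\le0$ — since, once Theorems~\ref{teo:sotto} and \ref{teo:condplus.VFp} are available, it is a faithful copy of the proof of Theorem~\ref{Uni-dim-N}.
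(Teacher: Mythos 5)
Your proposal is correct and follows essentially the same route as the paper: (i) and (iii) are immediate from Theorem~\ref{Uni-dim-N}, and (ii) is proved by rerunning that comparison argument against $\VFp$ with $\HT$ replaced by $\HTreg$, using Theorem~\ref{teo:sotto} for the subsolution inequality and Theorem~\ref{teo:condplus.VFp} for the alternative at the maximum point on $\H$. The only point you leave slightly understated (as does the paper) is that in case \textbf{A)} the trajectory starts at $\bar x\in\H$, so \eqref{reverse:sub} must be extended to points of $\overline\Omega_i$, which the Lipschitz continuity of $u$ permits.
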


This corollary can be read in the following way: unfortunately, we do not have a Strong Comparison Result for (\ref{Bellman-Om})-(\ref{Bellman-H-sub})-(\ref{Bellman-H-sup}) but we can identify the minimal supersolution (and solution) and  the maximal subsolution (and solution). But if we add the subsolution condition (\ref{Bellman-H}), then we recover the full Strong Comparison Result. In Section~\ref{sect:oneD}, explicit 1-D examples
show that this result is optimal and we give also sufficient conditions in 1-D which ensure that that (\ref{Bellman-Om})-(\ref{Bellman-H-sub})-(\ref{Bellman-H-sup}) and (\ref{Bellman-Om})-(\ref{Bellman-H-sub})-(\ref{Bellman-H-sup})-(\ref{Bellman-H}) are equivalent.

\begin{proof}
The proof is very short since we have just to apply Theorem~\ref{Uni-dim-N} for most of the cases.

The result (i) is an immediate consequence of the fact that $\VFm$, is a subsolution of (\ref{Bellman-Om})-(\ref{Bellman-H-sub})-(\ref{Bellman-H-sup})-(\ref{Bellman-H}) , while, for (iii), the result is exactly Theorem~\ref{Uni-dim-N}.

To prove (ii) we aim to use the same arguments as in the proof of Theorem~\ref{Uni-dim-N} with Hamiltonian $\HT$ being replaced by $\HTreg$.  \\
Let $u$ be any bounded Lipschitz continuous  subsolution of \eqref{Bellman-Om}-\eqref{Bellman-H-sub}-\eqref{Bellman-H-sup}. By the above regularization, we can suppose
that it is  a  $C^1$ function  on $R^{N-1}$.  Let $\bar{x}$ be a maximum point of $u-\VFp$ on $\R^N$, and observe that,
as above, we can assume that $\bar{x} \in \H$.
We observe now that  the function  $\VFp$
fulfills  Theorem \ref{teo:condplus.VFp} at point $\bar{x}$, mimimum of  the function $x' \mapsto \VFp(x',0)-u(x',0)$.
(Note that this is the analogue of Theorem~\ref{teo:condplus}). Therefore,  we have case ${\bf A)}$
or case  {\bf B)}.  \\
If we are in case {\bf B)}, it holds
  $$
  \HTreg \big(\bar{x},\VFp(\bar{x}), D_\H u(\bar{x})\big) \geq 0,
$$
therefore, the conclusion easily follows observing that  by Theorem \ref{teo:sotto}, $u$ fulfills in the classical sense
$$
 \HTreg \big(\bar{x},u(\bar{x}), D_\H u(\bar{x})\big) \leq 0.
$$
If we are in case ${\bf A)}$, there exist $\eta >0$, $i=1$ or $2$ and a control $\alpha_i (\cdot)$ such that, $\trajyxbari(s) \in \Omega_i $ for
all $s \in ]0,\eta]$
and
$$
\VFp(\bar{x}) \geq
   \int_0^{\eta} l_i(\trajyxbari(t), \alpha_i(t)) e^{-\lambda t} dt + \VFp (\trajyxbari(\eta))  e^{-\lambda \eta}.
$$
Therefore the conclusion follows by applying \eqref{reverse:sub} in Lemma~\ref{reverse}, noticing that, since $u$ is Lipschitz continuous, this property can be extended to points of $\overline\Omega_i$ (and not only to points of $\Omega_i$).  \\
\end{proof}

\begin{rem}
The last part of the proof of Corollary~\ref{sous-max-sur-min} clearly emphasizes the uniqueness problem  with (\ref{Bellman-Om})-(\ref{Bellman-H-sub})-(\ref{Bellman-H-sup}): either, on $\H$, we allow the singular strategies
$$ (\mu b_1(x,\aluno)+(1-\mu)b_2(x,\aldue))\cdot e_N=0,\;  b_1(x,\aluno) \cdot e_N > 0,\; b_2(x,\aldue)\cdot e_N< 0 \; ,$$
and, since they are not encoded in (\ref{Bellman-Om})-(\ref{Bellman-H-sub})-(\ref{Bellman-H-sup}), we have to add (\ref{Bellman-H}) to get the uniqueness. Or we do not allow them (or they are not optimal) and we obtain the uniqueness. In any case, the choice of  the inequality $\HT$ (to be imposed) or $\HTreg$ (consequence of (\ref{Bellman-Om})-(\ref{Bellman-H-sub})-(\ref{Bellman-H-sup})) makes the difference.
\end{rem}


\section{The 1-D case}\label{sect:oneD}

In this section we go a little bit further by providing a complete classification of the value functions $\VFm$ and $\VFp$.
Moreover, we derive explicit examples highlighting the different strategie that we call ``state constraints'', ``push-push'' and ``pull-pull'' strategies, and the non-uniqueness phenomenon.

\subsection{Structure of solutions}

In order to describe the structure of solutions, we introduce the state constraint solutions $\VF_\mathrm{SC1}$ in $\Omega_1$ and $\VF_\mathrm{SC2}$ in $\Omega_2$ which are defined, for $i=1,2$ in the following way
$$
\VF_\mathrm{SCi}(x_0) :=
 \inf_{\A_\mathrm{SCi}}
\,\biggl[\int_0^{+\infty} l_i(\trajyali(s),\alpha_i(s))
e^{-\lambda s}ds\biggr]\; ,
$$
where $\A_\mathrm{SCi}$ is the set of controls $\alpha_i(\cdot)$ for which $\trajyali(s)\in \overline \Omega_i$ for any $s\geq 0$.
Note that by  the classical results  in \cite{Son}, $\VF_\mathrm{SCi}$ are solutions of $H_i=0$ on $\Omega_i$ and $H_i \geq 0$ on
$\overline \Omega_i$  (see also \cite{BCD}).

We also denote by
$$u_\H(0)=\frac{1}{\lambda}\min_{(\mu,\alpha_1,\alpha_2)\in A_0(0)}\Big\{\mu l_1(0,\alpha_1)+(1-\mu) l_2(0,\alpha_2)\Big\}\,,$$
which is the solution of $H_T(x,u,D_\H u)=0$ on $\H=\{0\}$ in this particular case. Similarly, $u_\H^\mathrm{reg}(0)$ is the same quantity
when the min is taken over the regular controls, $A_0^\mathrm{reg}(0)$.

\begin{theo}\label{thm:structure}
Assume [H0], [H1] and [H2].  The following holds $\VFm(0)=\min\big\{u_\H(0),\VF_\mathrm{SC1}(0),\VF_\mathrm{SC2}(0)\big\}$. Therefore we have
    \begin{itemize}
      \item[(i)] if the min is given by $u_\H(0)$, then $\VFm$ is the unique solution of the Dirichlet problems
      $$\begin{cases} H_1(x,w,Dw)=0 \text{ in } \Omega_1 \\
        w(0)=u_\H(0) \end{cases}\quad\text{and}\quad
        \begin{cases} H_2(x,w,Dw)=0 \text{ in } \Omega_2 \\
        w(0)=u_\H(0) \end{cases}
      $$
      \item[(ii)] if the min is given by $\VF_\mathrm{SC1}(0)$, then $\VFm\equiv\VF_\mathrm{SC1}$ in $\Omega_1$, and in $\Omega_2$
      it is the unique solution of the Dirichlet problem $H_2=0$ with boundary value $\VFm(0)=\VF_\mathrm{SC1}(0)$;

     \item[(ii)] if the min is given by $\VF_\mathrm{SC2}(0)$, then $\VFm\equiv\VF_\mathrm{SC2}$ in $\Omega_2$, and in $\Omega_1$
      it is the unique solution of the Dirichlet problem $H_1=0$, with boundary value $\VFm(0)=\VF_\mathrm{SC2}(0)$.
    \end{itemize}
    Similarly, $\VFp(0)=\min\big\{u_\H^\mathrm{reg}(0),\VF_\mathrm{SC1}(0),\VF_\mathrm{SC2}(0)\big\}$
    and the value of the min identifies $\VFp$ as above, among the three possibilities.
\end{theo}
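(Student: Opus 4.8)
The plan is to pin down the number $\VFm(0)$ and then read off the global structure from one–dimensional Dirichlet theory. Set $c:=\min\{u_\H(0),\VF_\mathrm{SC1}(0),\VF_\mathrm{SC2}(0)\}$. Two observations make the $1$–D picture transparent: since $\H=\{0\}$ carries no tangential gradient, condition \eqref{Bellman-H} at $0$ reduces to $\VFm(0)\le u_\H(0)$; and whenever a trajectory sits at $0$ on a set of positive measure it must use controls in $A_0(0)$ there (because $\dot X=b_\H(0,\a)\cdot e_N=0$), hence pays an instantaneous cost $l_\H(0,\a)\ge\lambda u_\H(0)$. I would establish $\VFm(0)=c$ by two inequalities and then identify $\VFm$ on each side.

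For the upper bound $\VFm(0)\le c$, the constant trajectory $X\equiv0$ driven by a minimizing $\a^\ast\in A_0(0)$ (the minimum is attained, $A_0(0)$ being compact) is admissible and costs exactly $u_\H(0)$; moreover every state–constrained trajectory in $\overline\Omega_i$ is an element of $\Ta$ with the same integrand, so $\VFm(0)\le\VF_\mathrm{SCi}(0)$. Taking the minimum gives $\VFm(0)\le c$.

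The heart of the proof is the lower bound $\VFm(0)\ge c$, which I would obtain by a trajectory decomposition rather than by comparison, thereby avoiding any delicate junction analysis at $0$. Fix $(X,\a)\in\Ta$ issued from $0$. As $X$ is continuous and scalar, the open set $\{t:X(t)\ne0\}$ is a countable union of intervals $I_k$, on each of which $X$ keeps a constant sign, so each excursion lives in a single $\overline\Omega_{i_k}$; the complement is $T_\H=\{X=0\}$, where the cost rate is $\ge\lambda u_\H(0)\ge\lambda c$. For a finite excursion $I_k=(s,s')\subset\overline\Omega_1$ (so $X(s)=X(s')=0$) I would use the value function $V_1$ of the exit–time problem in $\overline\Omega_1$ with exit cost $c$: since $c\le\VF_\mathrm{SC1}(0)$ one has $V_1(0)=c$, and the dynamic programming inequality along $X$ up to the boundary time $s'$ gives
\[
c=V_1(0)\le\int_s^{s'}l_1(X,\alpha_1)e^{-\lambda(t-s)}\,dt+c\,e^{-\lambda(s'-s)},
\]
that is $\int_{I_k}l_1e^{-\lambda t}dt\ge\lambda c\int_{I_k}e^{-\lambda t}dt$; an infinite terminal excursion is handled directly by $\VF_\mathrm{SC1}(0)\ge c$, and $\Omega_2$–excursions symmetrically. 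Summing over $T_\H$ and all $I_k$ yields $J(0;(X,\a))\ge\lambda c\int_0^{+\infty}e^{-\lambda t}dt=c$, whence $\VFm(0)\ge c$.

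Finally, knowing $\VFm(0)=c\le\VF_\mathrm{SC1}(0),\VF_\mathrm{SC2}(0)$, the restriction of $\VFm$ to each $\overline\Omega_i$ solves $H_i=0$ (Theorem~\ref{teo:HJ}) with the attained boundary value $\VFm(0)$, so by the comparison for exit–time Dirichlet problems quoted in Lemma~\ref{reverse} it coincides with the corresponding Dirichlet solution; when the minimum is realized by $\VF_\mathrm{SCi}(0)$ this solution is exactly $\VF_\mathrm{SCi}$, giving cases (i)–(iii). The statement for $\VFp$ follows \emph{verbatim} with $A_0(0)$ replaced by $A_0^\mathrm{reg}(0)$ and $u_\H(0)$ by $u_\H^\mathrm{reg}(0)$, once one checks that the staying and state–constrained trajectories used above are regular. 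I expect the lower bound to be the genuine obstacle: one must decompose an arbitrary, possibly infinitely oscillating, trajectory into single–domain excursions and bound each loop's cost from below by $c$. The decisive point is to compare each excursion not with the unknown $\VFm$ — which would be circular through the dynamic programming principle — but with the externally defined exit value $V_1$, whose boundary value is forced to equal $c$ \emph{precisely because} $c\le\VF_\mathrm{SC1}(0)$.
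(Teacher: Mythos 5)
Your argument is correct in outline but takes a genuinely different route from the paper's for the core identity $\VFm(0)=\min\big\{u_\H(0),\VF_\mathrm{SC1}(0),\VF_\mathrm{SC2}(0)\big\}$. The paper obtains the lower bound as a special case of Proposition~\ref{prop:dim1.sursol}, which shows that \emph{every} bounded lsc supersolution $v$ of \eqref{Bellman-Om}-\eqref{Bellman-H-sub}-\eqref{Bellman-H-sup} satisfies $v(0)\geq\min\{\cdots\}$; that proof runs through the viscosity alternative of Theorem~\ref{teo:condplus} and the monotonicity/loop analysis of Lemma~\ref{monotone-y}. You instead work directly on the trajectories defining $\VFm$: you decompose $\{X\neq0\}$ into single-domain excursions, bound the running cost on $\{X=0\}$ from below by $\lambda u_\H(0)$ (correct, since a.e.\ on that set the control lies in $A_0(0)$ by Theorem~\ref{def:dyn}\,$(iii)$), and bound each excursion from below via the exit-time value $V_1$ with exit cost $c$. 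This is more elementary and self-contained, and it suffices for Theorem~\ref{thm:structure} as stated; what it does not deliver is the paper's stronger supersolution estimate, which is reused elsewhere (Corollary~\ref{cor:uniq.dim1} and the non-uniqueness discussion). The one step you leave implicit is the assertion $V_1(0)=c$: the inequality $V_1(0)\leq c$ is immediate (exit at $\theta=0$), but $V_1(0)\geq c$ requires ruling out an excursion with $\int_0^\theta l_1e^{-\lambda t}\,dt<c\,(1-e^{-\lambda\theta})$, and this is done by concatenating such a loop infinitely many times to produce a state-constrained trajectory in $\overline\Omega_1$ of cost strictly below $c\leq\VF_\mathrm{SC1}(0)$ --- a contradiction; this is essentially the same iteration device as in the paper's Lemma~\ref{monotone-y}, so you should state it as a sub-lemma. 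With that supplied, the rest goes through: the excursion inequalities sum correctly (the series converges absolutely since the $l_i$ are bounded), an infinite terminal excursion is handled by $\VF_\mathrm{SC1}(0)\geq c$ directly, the identification of $\VFm$ on each side by comparison for the exit-time Dirichlet problem matches the paper's final step, and the verbatim adaptation to $\VFp$ over $A_0^{\rm reg}(0)$ is legitimate once one checks, as you indicate, that the staying and state-constrained trajectories used in the upper bound are regular.
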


Concerning uniqueness, a direct consequence is the following:
\begin{cor}\label{cor:uniq.dim1}
Assume [H0], [H1] and [H2] and one of the following condition holds: $(i)$ $u_\H(0)=u_\H^\mathrm{reg}(0)$;
    $(ii)$ $u_\H(0)\geq\min\big\{\VF_\mathrm{SC1}(0),\VF_\mathrm{SC2}(0)\big\}$.
    Then $\VFm\equiv\VFp$: there is a unique solution of the problem.
\end{cor}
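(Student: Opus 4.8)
The plan is to read the result directly off the structure Theorem~\ref{thm:structure}, which reduces the whole comparison to the two scalar quantities $\VFm(0)$ and $\VFp(0)$. Since $\Treg\subset\Ta$, one has $\VFp\geq\VFm$ on all of $\R^N$, so it would suffice to prove the reverse inequality; I will in fact show the stronger statement that under either hypothesis $(i)$ or $(ii)$ the two values at the origin coincide, $\VFm(0)=\VFp(0)$, and that this single scalar equality already forces $\VFm\equiv\VFp$.

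The structural fact I would isolate is that, on each half-line $\Omega_i$ ($i=1,2$), both $\VFm$ and $\VFp$ coincide with the unique viscosity solution of the Dirichlet problem $H_i=0$ in $\Omega_i$ carrying the boundary datum given by their own value at the origin. In the cases of Theorem~\ref{thm:structure} where the minimum is attained at $u_\H(0)$ (resp.\ $u_\H^\mathrm{reg}(0)$) or at the \emph{opposite} state-constraint value, this is exactly the content of the theorem. The only point needing a short justification is the case where, say, $\VFm\equiv\VF_\mathrm{SC1}$ on $\Omega_1$: here I would check that the state-constraint solution $\VF_\mathrm{SC1}$ agrees with the Dirichlet solution of $H_1=0$ on $\Omega_1$ with boundary value $\VF_\mathrm{SC1}(0)$. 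Indeed, the Dirichlet value is clearly below $\VF_\mathrm{SC1}$ (more strategies are allowed), while for the reverse inequality any admissible exit strategy reaching $0$ at some finite time $\theta$ and then paying $\VF_\mathrm{SC1}(0)$ can be concatenated with a near-optimal state-constrained trajectory issued from $0$ into a single trajectory remaining in $\overline\Omega_1$, whose cost is arbitrarily close; this uses precisely that the boundary datum equals the critical value $\VF_\mathrm{SC1}(0)$. The same remark applies to $\Omega_2$ and to $\VFp$. Consequently, once $\VFm(0)=\VFp(0)=:c$, the restrictions of $\VFm$ and $\VFp$ to $\Omega_1$ (resp.\ $\Omega_2$) solve one and the same Dirichlet problem with boundary value $c$, hence coincide by the uniqueness for that problem already invoked in Theorem~\ref{thm:structure}, and the two functions also agree at the origin; thus $\VFm\equiv\VFp$.

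It then remains to verify $\VFm(0)=\VFp(0)$ under each hypothesis. From Theorem~\ref{thm:structure}, $\VFm(0)=\min\{u_\H(0),\VF_\mathrm{SC1}(0),\VF_\mathrm{SC2}(0)\}$ and $\VFp(0)=\min\{u_\H^\mathrm{reg}(0),\VF_\mathrm{SC1}(0),\VF_\mathrm{SC2}(0)\}$, and since $A_0^\mathrm{reg}(0)\subset A_0(0)$ the minimisation defining $u_\H^\mathrm{reg}(0)$ runs over a smaller set, so $u_\H^\mathrm{reg}(0)\geq u_\H(0)$. Under $(i)$ we have $u_\H(0)=u_\H^\mathrm{reg}(0)$, so the two minima are taken over identical triples of numbers and are equal. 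Under $(ii)$ we have $u_\H(0)\geq\min\{\VF_\mathrm{SC1}(0),\VF_\mathrm{SC2}(0)\}$, whence a fortiori $u_\H^\mathrm{reg}(0)\geq u_\H(0)\geq\min\{\VF_\mathrm{SC1}(0),\VF_\mathrm{SC2}(0)\}$; thus in both minima the tangential term is inactive and $\VFm(0)=\VFp(0)=\min\{\VF_\mathrm{SC1}(0),\VF_\mathrm{SC2}(0)\}$. In either case $\VFm(0)=\VFp(0)$, which by the previous paragraph completes the argument.

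I expect the only genuinely delicate step to be the reconciliation in the second paragraph, namely the uniform ``Dirichlet problem with boundary value equal to the value at $0$'' characterisation of each restriction $\VFm|_{\Omega_i}$, $\VFp|_{\Omega_i}$, and in particular the identification of the state-constraint solution with such a Dirichlet solution when the boundary datum is the critical value $\VF_\mathrm{SCi}(0)$. Everything else is bookkeeping on three real numbers together with the uniqueness for the one-dimensional Dirichlet problems.
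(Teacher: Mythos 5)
Your proposal is correct and follows essentially the same route as the paper, which presents this corollary as a direct consequence of Theorem~\ref{thm:structure}: under either hypothesis the two minima defining $\VFm(0)$ and $\VFp(0)$ coincide, and the structure theorem then forces the two functions to solve the same Dirichlet problems in $\Omega_1$ and $\Omega_2$. Your extra care in reconciling the state-constraint solution with the Dirichlet solution carrying the boundary datum $\VF_\mathrm{SCi}(0)$ is a legitimate point left implicit in the paper, and your concatenation argument for it is sound.
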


Theorem \ref{thm:structure} follows from the conjunction of several results. We begin with the following proposition concerning  supersolutions.
\begin{pro}\label{prop:dim1.sursol}
Assume [H0], [H1] and [H2]. Let $v$ be a bounded, lsc supersolution of (\ref{Bellman-Om})-(\ref{Bellman-H-sub})-(\ref{Bellman-H-sup}).
Then $$v(0)\geq \min\big\{u_\H(0), \VF_{\rm SC1}(0), \VF_{\rm SC2}(0)\big\}=\VFm(0)\,.$$
\end{pro}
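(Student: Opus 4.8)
The plan is to use Theorem~\ref{teo:condplus} (the supersolution alternative) at the single point $x_0=0$, which is now the entire interface $\H$ in dimension one. Since $v$ is a lsc supersolution of \eqref{Bellman-Om}-\eqref{Bellman-H-sub}-\eqref{Bellman-H-sup}, I would take $\phi$ to be any constant function (so that $D_\H\phi$ is void in the $1$-D setting, $\H$ being the single point $\{0\}$), making $0$ trivially a minimum point of $x'\mapsto v(x',0)-\phi(x')$. Theorem~\ref{teo:condplus} then splits the argument into two cases, which I would treat separately, in each case producing one of the three quantities in the minimum.

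First I would handle case \textbf{B)}. Here $\HT(0,v(0),D_\H\phi(0))\geq0$; but in dimension one the tangential gradient is void, so this inequality reads
$$
\sup_{\a\in A_0(0)}\big\{\lambda v(0)-l_\H(0,\a)\big\}\geq0\,,
$$
which, after dividing by $\lambda>0$ and taking the supremum over $A_0(0)$ on the running-cost side, gives exactly $v(0)\geq u_\H(0)$, by the very definition of $u_\H(0)$ as $\tfrac1\lambda\min_{A_0(0)}l_\H(0,\cdot)$. This settles case \textbf{B)}.

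Next I would treat case \textbf{A)}. Here there exist $\eta>0$, an index $i\in\{1,2\}$, and a sequence $x_k\in\overline\Omega_i\to0$ with $v(x_k)\to v(0)$, together with controls $\alpha_i^k(\cdot)$ whose trajectories $\trajyxki(s)$ remain in $\overline\Omega_i$ for $s\in[0,\eta]$ and satisfy \eqref{condA}. The idea is to iterate this suboptimality inequality to run the trajectory in $\overline\Omega_i$ for all time, comparing $v(x_k)$ to the state-constraint value $\VF_{\rm SCi}$. More precisely, since $v$ is lsc and bounded and the $b_i(\cdot,\alpha_i)$ are equi-Lipschitz, one can extend \eqref{condA} (using Lemma~\ref{reverse}, property \eqref{reverse:super}, applied repeatedly on $\overline\Omega_i$ with the reasoning extended to $\overline\Omega_i$ as noted in the proof of Corollary~\ref{sous-max-sur-min}) to obtain, for every $T>0$,
$$
v(x_k)\geq\inf_{\alpha_i(\cdot)}\Big[\int_0^{T}l_i(\trajyxki(s),\alpha_i(s))e^{-\lambda s}\,ds+v(\trajyxki(T))e^{-\lambda T}\Big]\,,
$$
where the infimum is over controls keeping the trajectory in $\overline\Omega_i$ on $[0,T]$. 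Letting $T\to+\infty$, the boundedness of $v$ kills the terminal term and the infimum converges (by definition) to $\VF_{\rm SCi}(x_k)$; passing to the limit $k\to\infty$ using $v(x_k)\to v(0)$ and the continuity of $\VF_{\rm SCi}$ yields $v(0)\geq\VF_{\rm SCi}(0)\geq\min\{\VF_{\rm SC1}(0),\VF_{\rm SC2}(0)\}$.

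Combining the two cases gives $v(0)\geq\min\{u_\H(0),\VF_{\rm SC1}(0),\VF_{\rm SC2}(0)\}$, and the final equality with $\VFm(0)$ follows because $\VFm$ is itself a supersolution (so the inequality applies to it) while the reverse inequality $\VFm(0)\leq\min\{\cdots\}$ is immediate by exhibiting the three competing strategies (staying on $\H$, or state-constrained in $\Omega_1$, or in $\Omega_2$) as admissible in the definition \eqref{eq:valorem} of $\VFm$. The main obstacle I anticipate is the iteration/extension step in case \textbf{A)}: turning the single finite-horizon inequality \eqref{condA} into the full infinite-horizon state-constraint value, which requires care to keep the trajectories inside $\overline\Omega_i$ for all time and to justify passing from the sequence $x_k$ to the limit point $0$ using only lower semicontinuity of $v$; the boundedness of $v$ and the discount factor $e^{-\lambda T}$ are what make the infinite-horizon limit harmless.
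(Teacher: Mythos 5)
Your overall architecture matches the paper's: invoke the alternative of Theorem~\ref{teo:condplus} at $x_0=0$ (with a constant test function, which is legitimate since $\H=\{0\}$ in dimension one), read case \textbf{B)} as $v(0)\geq u_\H(0)$, and try to convert case \textbf{A)} into $v(0)\geq \VF_{\rm SC\it i}(0)$. Case \textbf{B)} and the final identification of the minimum with $\VFm(0)$ are fine. The gap is in case \textbf{A)}: your displayed inequality
$$v(x_k)\geq\inf_{\alpha_i(\cdot)}\Big[\int_0^{T}l_i\big(Y^i_{x_k}(s),\alpha_i(s)\big)e^{-\lambda s}\,ds+v\big(Y^i_{x_k}(T)\big)e^{-\lambda T}\Big]\,,$$
with the infimum restricted to controls keeping the trajectory in $\overline\Omega_i$ on $[0,T]$, does not follow from \eqref{condA} together with Lemma~\ref{reverse}. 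Property \eqref{reverse:super} is the \emph{exit-time} representation: its infimum is taken over all controls and admissible stopping times, including trajectories that leave $\Omega_i$ through $\H$ and are stopped there; restricting to state-constrained controls makes the infimum \emph{larger}, so the state-constraint inequality is strictly stronger and does not follow from the exit-time one. Condition \textbf{A)} only hands you one control on one fixed horizon $[0,\eta]$; to continue past $\eta$ via Lemma~\ref{reverse} you again get only the exit-time infimum, and if the optimal trajectory reaches $\H$ you end up with $v$ evaluated back near $0$ --- which is circular.

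This is precisely the difficulty the paper resolves with Lemma~\ref{monotone-y}, a genuinely one-dimensional structural result about the exit-time value $w_-$: either $v(x_k)\geq\VF_{\rm SC1}(x_k)$ already holds (and one concludes by continuity of $\VF_{\rm SC1}$), or the optimal control and stopping time can be chosen $t$-independent with a finite exit time and a \emph{monotone decreasing} trajectory. In the second case the exit point is squeezed between $0$ and $x_k$, so passing to the limit $k\to\infty$ gives $v(0)\geq\int_0^{\bar\theta_1}l_1(0,\bar\alpha_1(s))e^{-\lambda s}\,ds+v(0)e^{-\lambda\bar\theta_1}$, and iterating this loop at $x=0$ --- where staying put \emph{is} a state-constrained trajectory --- yields $v(0)\geq\VF_{\rm SC1}(0)$. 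Without a substitute for this monotonicity/dichotomy argument, the passage from the finite-horizon inequality \eqref{condA} to the infinite-horizon state-constraint value, which you yourself flag as the main obstacle, remains unproved. (The paper also treats separately the degenerate case $v(0)<\liminf_{x\to0}v(x)$, where the sequence $x_k$ must be identically $0$ and the trajectory must sit on $\H$; this is a minor point compared with the above.)
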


Before proving this proposition, we go back to the solution introduced in Lemma~\ref{reverse} which is considered here for $x_0\in\Omega_1$
\begin{equation}\label{ineqvw}
v(x_0) \geq w_-(x_0,t):= \inf_{\alpha_1(\cdot),\theta_1}
\,\biggl[\int_0^{t\wedge\theta_1} l_1(\trajyaluno(s),\alpha_1(s))
e^{-\lambda s}\,ds +v(\trajyaluno(t\wedge\theta_1))e^{-\lambda( t\wedge\theta_1)}\biggr]\;,
\end{equation}
where we recall that $\trajyaluno$ is the solution of the ode (\ref{def:trajY}) with $i=1$
and the infimum is taken on all stopping times $\theta_1$ such that $\trajyaluno(\theta_1) = 0$
and $\tau_1 \leq \theta_1 \leq \bar \tau_1$ where $\tau_1$ is the exit time of the trajectory
$\trajyaluno$ from $\Omega_1$ and $ \bar\tau_1$ is the one from $\overline{\Omega}_1$.

We again point out that we do not need relaxed controls in the expression of $w_-$
because of Assumption [H2] since in our case, relaxed controls coincide with usual $L^\infty$ controls.
Using this remark, we notice that, for any $x_0\neq 0$ and $t>0$, the above infimum
for $w_-$ is achieved for some control $\bar\alpha_1(\cdot)$ and some stopping time $0<{\bar \theta}_1 \leq +\infty$, namely
\begin{equation}\label{w-moins}
w_-(x_0,t) = \int_0^{t\wedge {\bar \theta}_1} l_1(\trajyaluno(s),\bar\alpha_1(s))
e^{-\lambda s}ds+v(\trajyaluno(t\wedge {\bar \theta}_1 ))e^{-\lambda (t\wedge{\bar \theta}_1)}\; .
\end{equation}

A priori, $\bar\alpha_1(\cdot)$ and ${\bar \theta}_1$ depend on $x_0$ and $t$ but we drop most of the time
this dependence for the sake of simplicity of notations.

The following lemma holds
\begin{lem}\label{monotone-y} Either $v(x_0) \geq  \VF_{\rm SC1}(x_0)$ or, for $t$ large enough, the above defined control $\bar\alpha_1(\cdot)$ and stopping time ${\bar \theta}_1$ can be chosen as  being $t$-independent, ${\bar \theta}_1$ being finite. Moreover the associated trajectory $\trajyaluno(\cdot)$ is decreasing.
\end{lem}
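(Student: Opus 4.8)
The plan is to exploit the reverse inequality $v(x_0)\ge w_-(x_0,t)$ of Lemma~\ref{reverse} (here $x_0\in\Omega_1$ and $\H=\{0\}$) together with the fact, recalled in \eqref{w-moins}, that for each fixed $t$ the infimum defining $w_-(x_0,t)$ is attained by a control $\bar\alpha_1(\cdot)$ and a stopping time $0<\bar\theta_1\le+\infty$ (a priori both depending on $t$). The dichotomy is then obtained by letting $t\to+\infty$ and distinguishing according to whether the optimal trajectory spends an \emph{unbounded} amount of time in $\overline\Omega_1$ before reaching $\H$ (which will force the first alternative $v(x_0)\ge\VF_{\rm SC1}(x_0)$) or not (which gives a finite, $t$-independent $\bar\theta_1$, and then monotonicity).

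First I would argue by contradiction: assume $v(x_0)<\VF_{\rm SC1}(x_0)$ and prove the second alternative. Set $\sigma(t):=t\wedge\bar\theta_1$, the time at which the optimal trajectory is frozen in $w_-(x_0,t)$; on $[0,\sigma(t)]$ the trajectory $\trajyaluno$ stays in $\overline\Omega_1$. Suppose, along some $t_n\to+\infty$, that $\sigma(t_n)\to+\infty$. Extending the control after time $\sigma(t_n)$ by any control keeping the trajectory in $\overline\Omega_1$ (possible since $0\in\overline{B(0,\delta)}\subset B_1(x)$ by [H2]) produces an admissible state-constrained trajectory, whence
$$\VF_{\rm SC1}(x_0)\le \int_0^{\sigma(t_n)} l_1(\trajyaluno(s),\bar\alpha_1(s))\,e^{-\lambda s}\,ds+\frac{||l_1||_\infty}{\lambda}\,e^{-\lambda\sigma(t_n)}.$$
Comparing with $w_-(x_0,t_n)=\int_0^{\sigma(t_n)} l_1\,e^{-\lambda s}\,ds+v(\trajyaluno(\sigma(t_n)))\,e^{-\lambda\sigma(t_n)}$ and using $|v|\le||v||_\infty$, one gets $w_-(x_0,t_n)\ge \VF_{\rm SC1}(x_0)-Ce^{-\lambda\sigma(t_n)}$ with $C=||l_1||_\infty/\lambda+||v||_\infty$. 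Since $v(x_0)\ge w_-(x_0,t_n)$, letting $n\to+\infty$ yields $v(x_0)\ge\VF_{\rm SC1}(x_0)$, a contradiction. Hence $\sigma(t)=t\wedge\bar\theta_1$ stays bounded, say by $T$, as $t\to+\infty$; for $t>T$ this forces $\bar\theta_1\le T<t$, so the optimal stopping time is finite and the trajectory actually reaches $\H$.

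Once $\bar\theta_1<t$ is finite we have $\trajyaluno(\bar\theta_1)=0$, so the attained value reads $\int_0^{\bar\theta_1} l_1\,e^{-\lambda s}\,ds+v(0)e^{-\lambda\bar\theta_1}$ and no longer involves $t$. Moreover the bound just proved shows that, for every $t\ge T$, \emph{all} optimizers satisfy $\bar\theta_1\le T$; therefore the infimum over reaching trajectories with stopping time $\le T$ — the same set for every $t\ge T$ — already realizes $w_-(x_0,t)$, so $w_-(x_0,\cdot)$ is constant on $[T,+\infty)$. Fixing one optimal pair $(\bar\alpha_1,\bar\theta_1)$ at some $t_0>T$ (with $\bar\theta_1\le T$), the same pair is admissible and optimal for every $t\ge t_0$. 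Thus $\bar\alpha_1$ and $\bar\theta_1$ may be chosen $t$-independent and finite, which is exactly the second alternative.

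It remains to show the optimal trajectory may be taken decreasing, and this is the delicate point. I would replace $\trajyaluno$ by its running minimum $\underline{Y}(s):=\min_{0\le r\le s}\trajyaluno(r)$, which is non-increasing, starts at $x_0$, reaches $0$ at the first hitting time $\tau_1\le\bar\theta_1$, and is again an admissible trajectory: where $\underline{Y}=\trajyaluno$ its velocity is that of $\trajyaluno$, and where $\underline{Y}$ is locally constant its velocity $0$ belongs to $B_1$ by [H2]. The main obstacle is the cost bookkeeping on the flat parts of $\underline{Y}$, where a zero-velocity control must be selected: one has to check, using the convexity of the images in [H2] and the discount factor $e^{-\lambda s}$ (which makes the postponed part of the motion cheaper and rewards reaching $\H$ no later), that excising the upward excursions of $\trajyaluno$ does not increase the cost, so that $\underline{Y}$ — suitably reparametrized to suppress the flat parts — is still optimal and strictly decreasing. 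This cut-and-paste, which relies essentially on the one-dimensional geometry, is where the argument requires the most care; everything preceding it is a direct passage to the limit.
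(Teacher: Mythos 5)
The first two parts of your argument are sound and close in spirit to the paper's. Your proof that $\sigma(t)=t\wedge\bar\theta_1$ stays bounded (extend the optimal arc after time $\sigma(t_n)$ to a state-constrained competitor, compare with $w_-$, and use $|v|\le\|v\|_\infty$) is a legitimate and slightly more quantitative variant of the paper's compactness argument, and your deduction that $\bar\theta_1$ is then finite and that one optimal pair $(\bar\alpha_1,\bar\theta_1)$ serves for all large $t$ is correct.

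The genuine gap is the monotonicity claim, which you explicitly leave unproved, and the surgery you propose does not close it. Replacing $\trajyaluno$ by its running minimum is not a cost-neutral operation: on the flattened intervals you must substitute a zero-velocity control $\alpha^0$ with $b_1(x,\alpha^0)=0$, whose running cost $l_1(x,\alpha^0)$ bears no relation to the cost of the suppressed excursion; if instead you excise the excursions and reparametrize, the tail of the trajectory is advanced in time and its contribution is multiplied by $e^{\lambda(s_2-s_1)}$, and whether that helps depends on the sign of the remaining value --- your heuristic that the discount ``rewards reaching $\H$ no later'' fails, for instance, when $v(0)>0$, since arriving earlier then \emph{increases} the term $v(0)e^{-\lambda\bar\theta_1}$. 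The paper runs the surgery in the opposite direction, which is what makes it work: if $\trajyaluno$ is not monotone on $[0,\bar\theta_1[$ then (here the one-dimensional geometry enters) there exist $s_1<s_2$ with $\trajyaluno(s_1)=\trajyaluno(s_2)$; the Dynamic Programming Principle for $w_-$ shows this loop is \emph{exactly} cost-neutral, so \emph{iterating} it forever yields an admissible trajectory staying in $\Omega_1$ whose infinite-horizon cost equals $w_-(x_0,t)$, whence $v(x_0)\ge w_-(x_0,t)=\VF_{\rm SC1}(x_0)$, contradicting the standing assumption. A monotone increasing trajectory never reaches $\H$, contradicting $\bar\theta_1<+\infty$, so the optimal trajectory must be decreasing. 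You should replace your cut-and-paste step by this loop-iteration argument (or, equivalently, justify your excision via the same DPP identity, which is the only way to make the cost comparison come out right).
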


\begin{proof}
We assume that $v(x_0)  <  \VF_{\rm SC1}(x_0)$; the aim of the proof is to show that the second case is true.

We first remark that, necessarily, this implies that the stopping time ${\bar \theta}_1$ (which a priori depends on $t$ at this stage of the proof) remains uniformly bounded as $t\to +\infty$. Otherwise, at least up to some subsequence, we can pass to the limit in \eqref{w-moins}, using the compactness of controlled trajectories\footnote{We recall again that we do not need relaxed controls because of Assumption [H2].} and we get, by standard arguments, that 
$$ \limsup_{t\to +\infty}\,w_-(x_0,t)\geq  \inf_{\A_\mathrm{SC1}}
\,\biggl[\int_0^{+\infty} l_1(\trajyaluno(s),\alpha_1(s))
e^{-\lambda s}\,ds\biggr]=\VF_{\rm SC1}(x_0)>v(x_0)\; ,$$ which contradicts \eqref{ineqvw}.

Therefore we may assume that ${\bar \theta}_1$ remains bounded and take $t$ large enough so that
$t\wedge {\bar \theta}_1 = {\bar \theta}_1$. Then
\begin{align*}
w_-(x_0,t) &= \int_0^{\bar\theta_1} l_1(\trajyaluno(s),\bar\alpha_1(s))e^{-\lambda s}ds
+v(\trajyaluno({\bar \theta}_1))e^{-\lambda {\bar \theta}_1}\; ,\\
& = \inf_{\alpha_1(\cdot),\theta_1}
\,\biggl[\int_0^{\theta_1} l_1(\trajyaluno(s),\alpha_1(s))
e^{-\lambda s}\,ds +v(\trajyaluno(\theta_1))e^{-\lambda\theta_1}\biggr]\;,
\end{align*}
the last equality being a consequence of the optimality of the control $\bar\alpha_1(\cdot)$ and the stopping time $\bar\theta_1$. For this infinite horizon, exit time control problem, there exists an optimal control and a stopping time that we still denote by $\bar\alpha_1(\cdot)$ and $\bar\theta_1$ (which are obviously independent of $t$).

Moreover, pick any $0<\sb<\bar\theta_1$,  by the Dynamic Programming Principle for $w_-$
\begin{eqnarray*}
w_-(x_0,t) &=& \inf_{\alpha_1(\cdot),{\theta_1}}
\,\biggl[\int_0^{( t-\sb) \wedge  \theta_1 } l_1(\trajyaluno (s),\alpha_1(s))
e^{-\lambda s}ds
+w_-(\trajyaluno(( t-\sb)\wedge  \theta_1), t-\sb)e^{-\lambda (( t-\sb)\wedge  \theta_1)}\biggr]\; \\
& \leq &  \int_0^{\sb} l_1(\trajyaluno (s),\bar\alpha_1(s))
e^{-\lambda s}ds + w_-(\trajyaluno(\sb), t-\sb)e^{-\lambda \sb}\; .
\end{eqnarray*}
We deduce from this property that
$$
w_-(\trajyaluno(\sb), t-\sb)e^{-\lambda \sb}= \int_\sb^{{\bar \theta}_1} l_1(\trajyaluno (s),\bar\alpha_1(s))
e^{-\lambda s}ds
+v(\trajyaluno({\bar \theta}_1))e^{-\lambda {\bar \theta}_1}\; .$$
Therefore $w_-(\trajyaluno(\sb), t-\sb)$ is independent of $t$ as well and we drop this dependence
by just writing $w_-(\trajyaluno(\sb))$ for $0\leq \sb <{\bar \theta}_1$.

If $s \mapsto \trajyaluno(s)$ is not monotone on $[0,\bar\theta_1[$, there exists $0\leq s_1 <s_2<{\bar \theta}_1$
such that $\trajyaluno (s_1)=\trajyaluno (s_2)$. By the above property, we have
$$
w_-(\trajyaluno(s_1))e^{-\lambda s_1}= \int_{s_1}^{s_2} l_1(\trajyaluno (s),\bar\alpha_1(s))
e^{-\lambda s}ds+w_-(\trajyaluno(s_2))e^{-\lambda s_2}\; .$$
Using the fact that $\trajyaluno(s_1)=\trajyaluno(s_2)$ and the Dynamic Programming Principle which can be written as
$$
w_-(x_0,t) = \int_0^{s_1} l_1(\trajyaluno(s),\bar\alpha_1(s))
e^{-\lambda s}ds + w_-(\trajyaluno(s_1))e^{-\lambda s_1}\; ,$$
this means that, iterating the loop, from $s_1$ to $s_2$,
we have an optimal control defined for all $s>0$. More precisely we introduce the control
$$ \tilde \alpha_1 (s) = \left\{
\begin{array}{rl}
\bar\alpha_1(s) & \hbox{if  }0\leq s \leq s_2\; ,\\
\bar\alpha_1(s-(s_2-s_1)) & \hbox{if  }s_2 \leq s \leq s_2+(s_2-s_1)\; ,\\
&   \vdots \\
\bar\alpha_1(s-k(s_2-s_1)) & \hbox{if  }s_2 + k(s_2-s_1)\leq s \leq s_2+(k+1)(s_2-s_1)\;. \\
\end{array}
\right.
$$
Arguing by induction, it is easy to show that the associated trajectory $\tiltrajyaluno$
remains in $\Omega_1$ for all $s>0$, that $\tiltrajyaluno(s_2+(k+1)(s_2-s_1))= \tiltrajyaluno (s_1)=\tiltrajyaluno(s_2)$ and
$$
w_-(x_0,t) = \int_0^{s_2+(k+1)(s_2-s_1)} l_1(\tiltrajyaluno(s),\tilde \alpha_1(s))
e^{-\lambda s}ds + w_-(\tiltrajyaluno(s_2))e^{-\lambda (s_2+(k+1)(s_2-s_1))}\; .$$
Letting $k \to +\infty$ gives $w_-(x_0,t) \geq \VF_{\rm SC1}(x_0)$ but the definition of
$w_-$ implies that it is in fact an equality.

Now if the trajectory is monotone, there are two possibilities: increasing or decreasing.
We remark that if $s\mapsto \trajyaluno(s)$ is increasing in $\Omega_1$,
then ${\bar \theta}_1$ is necessarily $+\infty$ and $\lim_{t \ds \infty} w_-(x_0,t) = \VF_{\rm SC1}(x_0)$.
The only remaining possibility is that the trajectory is decreasing, which ends the proof.
\end{proof}

Now we can proceed with the proof of Proposition \ref{prop:dim1.sursol}.

{\it Proof of Proposition \ref{prop:dim1.sursol}.}
We first examine the case when $v(0)<\liminf_{x\to0}v(x)$ and apply Theorem~\ref{teo:condplus}. We first remark that, if \textbf{A)} holds for some $\eta >0$ then it is also true for any $\bar\eta <\eta$, by the Dynamic Programming Principle. Therefore we can assume that $\eta$ is as small as we want.

Then the property $v(0)<\liminf_{x\to0}v(x)$ implies that, in \eqref{condA}, necessarily $x_k=0$ and $\trajyxki(\eta)=0$ if $\eta$ is small enough. A simple computation then yields $\lambda v(0) \geq \inf_{\alpha_i}\,l_i(0,\alpha_i) \geq \lambda u_\H(0)$.

On the contrary, if \textbf{B)} holds,
we know that, for any test function, the minimum is attained at $x=0$, which implies directly
$$H_T(0,v(0),D_\H v(0))=\lambda(v(0)-u_\H(0))\geq 0\,.$$
Hence, in both cases, $v(0)\geq u_\H(0)$ which implies the result.

Since $v$ is lsc, we may now assume in the rest of the proof that $v(0)=\liminf_{x\to0}v(x)=\lim_{x_k \ds 0} v(x_k)$.
The alternative of Theorem~\ref{teo:condplus} can also be applied. In case $\mathbf{B)}$, we have exactly as above:
$v(0)\geq u_\H(0)\geq \min\big\{u_\H(0), \VF_{\rm SC1}(0), \VF_{\rm SC2}(0)\big\}$.

It remains to treat case $\mathbf{A)}$. For simplicity we assume that, for any $k$, $x_k>0$, that is we always have $i=1$ and we use Lemma~\ref{monotone-y}: up to the extraction of subsequences, the following holds

\noindent $(i)$ either, for any $x_k$, we have $v(x_k)\geq \VF_{\rm SC1}(x_k)$ and
we conclude by letting $x_k\to 0$, which gives directly $v(0)\geq\VF_{\rm SC1}(0)$, since  $\VF_{\rm SC1}$ is a  continuous function thanks to assumptions [H1] and [H2];\\
$(ii)$ or for any $x_k$, the associated trajectory $Y^1_{x_k}$ is decreasing.
In this case
we notice that $0\leq Y^1_{x_k}({\bar \theta}_1)\leq x_k$ and pass to the limit in the expression
$$v(x_k)\geq w_-(x_k,t) = \int_0^{{\bar \theta}_1} l_1(Y^1_{x_k}(s),\bar\alpha_1(s))e^{-\lambda s}ds
+v(Y^1_{x_k}({\bar \theta}_1))e^{-\lambda {\bar \theta}_1}\;,
$$
which yields
$$v(0)\geq \int_0^{{\bar \theta}_1} l_1(0,\bar\alpha_1(s))e^{-\lambda s}ds
+v(0)e^{-\lambda {\bar \theta}_1}\; .
$$
Iterating the estimate as above we obtain that $v(0)\geq\VF_{\rm SC1}(0)$.

Of course, if the sequence $x_k$ lies in $\Omega_2$, we obtain in each case that $v(0)\geq\VF_{\rm SC2}(0)$.
Combining all the cases above, we get indeed
$$v(0)\geq\min\big\{u_\H(0), \VF_{\rm SC1}(0), \VF_{\rm SC2}(0)\big\}   \,.$$

It remains to proove that this min is actually $\VFm(0)$. First,
by comparison in $\Omega_i$ we get $\VFm(x)\leq \VF_{\mathrm{SC}i}(x)$, $i=1,2$.
Moreover, \eqref{eq:HJUm} gives $\VFm(0)\leq u_\H(0)$ so that we obtain
$\VFm(0)\leq\min\big\{u_\H(0), \VF_{\rm SC1}(0), \VF_{\rm SC2}(0)\big\}$. Then, since $\VFm$
is a supersolution, we can use the reverse inequality for $v=\VFm$ and conclude that equality holds.
\hfill $\Box$

Concerning subsolutions, we have
\begin{pro}\label{pro:dim1.subsol}
Assume [H0], [H1] and [H2]. Let  $u$ be a bounded, usc subsolution of (\ref{Bellman-Om})-(\ref{Bellman-H-sub})-(\ref{Bellman-H-sup}).
Then $$u(0)\leq \min\big\{u_\H^\mathrm{reg}(0), \VF_{\rm SC1}(0), \VF_{\rm SC2}(0)\big\}=\VFp(0)\,.$$
\end{pro}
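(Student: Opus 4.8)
The plan is to prove the three upper bounds $u(0)\le u_\H^{\rm reg}(0)$, $u(0)\le\VF_{\rm SC1}(0)$ and $u(0)\le\VF_{\rm SC2}(0)$ separately, and then to identify their minimum with $\VFp(0)$. Throughout I would use that, because $H_1,H_2$ are coercive under [H2], the subsolution $u$ is automatically Lipschitz continuous, hence continuous up to $\H$; this continuity is precisely what lets me compare $u$ with control-theoretic quantities at the boundary point $0$.

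First I would establish the tangential bound through Theorem~\ref{teo:sotto}, which asserts that $x'\mapsto u(x',0)$ is a viscosity subsolution of $\HTreg=0$ on $\H$. In dimension one $\H=\{0\}$ is a single point, so the tangential variable $D_\H u$ is empty and the subsolution test reduces to a pointwise inequality at $0$; moreover every $\a\in A_0^{\rm reg}(0)$ satisfies $b_\H(0,\a)\cdot e_N=0$, which in one dimension forces $b_\H(0,\a)=0$ and annihilates the transport term $-b_\H(0,\a)\cdot(p_\H,0)$. Thus $\HTreg(0,u(0),\cdot)=\lambda u(0)-\min_{A_0^{\rm reg}(0)}l_\H(0,\a)$, and the subsolution inequality $\HTreg\le0$ reads exactly $u(0)\le u_\H^{\rm reg}(0)$.

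For the two state-constraint bounds I would view $u|_{\Omega_i}$ as a bounded, Lipschitz subsolution of $H_i=0$ in $\Omega_i$, and invoke that $\VF_{\rm SC1}$, $\VF_{\rm SC2}$ are, by Soner's theory \cite{Son}, the constrained solutions of $H_i=0$: each solves the equation inside $\Omega_i$ and satisfies the state-constraint inequality $H_i\ge0$ on the closed set $\overline\Omega_i$. Since $\VF_{\rm SC1}$, $\VF_{\rm SC2}$ are continuous up to the boundary (controllability [H2]) and, as the state-constraint solutions, are the maximal subsolutions of $H_i=0$ in $\Omega_i$, the classical state-constraint comparison gives $u\le\VF_{\rm SC1}$ on $\overline\Omega_1$ and $u\le\VF_{\rm SC2}$ on $\overline\Omega_2$, in particular $u(0)\le\VF_{\rm SC1}(0)$ and $u(0)\le\VF_{\rm SC2}(0)$. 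Combining the three bounds yields $u(0)\le\min\{u_\H^{\rm reg}(0),\VF_{\rm SC1}(0),\VF_{\rm SC2}(0)\}$.

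It remains to identify this minimum with $\VFp(0)$. One inequality is immediate: since $\VFp$ is a subsolution of \eqref{Bellman-Om}-\eqref{Bellman-H-sub}-\eqref{Bellman-H-sup} by Theorem~\ref{teo:HJ}, the bound just proved, applied with $u=\VFp$, gives $\VFp(0)\le\min\{u_\H^{\rm reg}(0),\VF_{\rm SC1}(0),\VF_{\rm SC2}(0)\}$. For the reverse inequality I would adapt the proof of Proposition~\ref{prop:dim1.sursol} to the supersolution $v=\VFp$, replacing the use of Theorem~\ref{teo:condplus} by the sharper regular property of Theorem~\ref{teo:condplus.VFp}: in the alternative "case B)" this yields $\HTreg(x_0,\VFp(x_0),D_\H\phi(x_0'))\ge0$, i.e. $\VFp(0)\ge u_\H^{\rm reg}(0)$ in one dimension, while the "case A)" alternative, handled through Lemmas~\ref{reverse} and~\ref{monotone-y}, is untouched because it only concerns trajectories remaining inside $\Omega_i$. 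This gives $\VFp(0)\ge\min\{u_\H^{\rm reg}(0),\VF_{\rm SC1}(0),\VF_{\rm SC2}(0)\}$ and hence the equality. The step I expect to be the most delicate is precisely this transfer: one must check that restricting to $A_0^{\rm reg}$ in Theorem~\ref{teo:condplus.VFp} is exactly what upgrades $u_\H$ into $u_\H^{\rm reg}$, while confirming that none of the interior ($\Omega_i$) estimates of Proposition~\ref{prop:dim1.sursol} are affected by this change.
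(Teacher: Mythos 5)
Your proposal is correct and follows essentially the same route as the paper: the tangential bound $u(0)\le u_\H^{\rm reg}(0)$ via Theorem~\ref{teo:sotto} (the paper quotes \eqref{eq:HJUp}), the state-constraint comparison in each $\Omega_i$ for the other two bounds, the inequality $\VFp(0)\le\min$ by applying the first part to $u=\VFp$, and the reverse inequality via the alternative of Theorem~\ref{teo:condplus.VFp} combined with Lemma~\ref{monotone-y}. The extra detail you supply (why the $1$-D tangential inequality collapses to $\lambda u(0)\le\min_{A_0^{\rm reg}(0)}l_\H(0,\a)$, and why restricting to regular controls upgrades $u_\H$ to $u_\H^{\rm reg}$) is consistent with, and slightly more explicit than, the paper's argument.
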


\begin{proof}
    We first show the inequality on the left.
    Notice first that by \eqref{eq:HJUp}, $u(0)\leq \VFp(0)\leq u_\H^\mathrm{reg}(0)$. Also, by classical comparison arguments,
    it is clear that in $\Omega_1$, $u(x)\leq\VF_\mathrm{SC1}(x)$, and the same in $\Omega_2$ with $\VF_\mathrm{SC2}$.
    Hence the inequality indeed holds. Notice that $\VFp(0)$ itself also satisfies the inequality on the left.

    Now, for the equality on the right the argument is similar to the one we used for the supersolutions.
    Alternative $\mathbf{B)}$ of Theorem \ref{teo:condplus.VFp} translates directly into $\VFp(0)\geq u_\H^ \mathrm{reg}(0)$.
    In the other case, we use Lemma \ref{monotone-y} and conclude as above that $\VFp(0)\geq \VF_\mathrm{SC1}(0)$.
    Hence $\VFp(0)\geq\min\big\{u_\H^\mathrm{reg}(0), \VF_{\rm SC1}(0), \VF_{\rm SC2}(0)\big\}  $, and combining with the first part of this proof we get equality.
\end{proof}

We then turn to the {\bf  proof of Theorem \ref{thm:structure}}.
\begin{proof}
    We consider the case of $\VFm$, the argument being the same for $\VFp$. The first part of the Theorem has been proved in
    Proposition~\ref{prop:dim1.sursol}. Then, we solve separately the Dirichlet problems in $\Omega_1$ and $\Omega_2$, putting the
    value of the min as boundary condition at $x=0$.
    We get a solution $u_\sharp$ in $\Omega_1\cup\Omega_2$ which satisfies $u_\sharp(0)=\VFm(0)$. Hence, by uniqueness for the
    Dirichlet problem in each $\Omega_i$, we end up with $u_\sharp\equiv\VFm$.
\end{proof}

\subsection{State constraint strategies}\label{subsec:sc}

We consider dynamics which are given by
$$\dottrajxo(t)=\alpha_1(t)\text{ in }\Omega_1\,,\quad\dottrajxo(t)=\alpha_2(t)\text{ in }\Omega_2\,,$$
where $\alpha_1(\cdot),\alpha_2(\cdot)\in L^\infty\big(0,+\infty;[-1,1]\big)$ are the controls.
We are thus in the case where the dynamic reduces to $b_i(x,\alpha_i)=\alpha_i$: the control is actually
the velocity of the trajectory. Then we consider the following costs
$$l_1(x,\alpha_1)=1-\alpha_1+e^{-|x|}\text{ in }\Omega_1\,,\quad l_2(x,\alpha_2)=1+\alpha_2+e^{-|x|}\text{ in }\Omega_2\,.$$
In this case, it is rather obvious that the best strategy for $x_0>0$ consists in choosing $\alpha_1\equiv1$, which yields a
state constraint solution. To be more precise, let us first consider
$$\lambda u_\H(0)=\min_{\mu,\alpha_1,\alpha_2}\Big\{\mu(2-\alpha_1)+(1-\mu)(2+\alpha_2) : \mu \alpha_1+(1-\mu)\alpha_2=0\Big\}\,.$$
Using the compatibility condition, we can compute the minimum which is atteind for
$(\mu,\alpha_1,\alpha_2)=(1/2,1,-1)$ and gives $u_\H(0)=1/\lambda$.

Now, we can compute the state constraint solution $\VF_\mathrm{USC1}$: for $x_0>0$ we have
$$\begin{aligned}
    \VF_\mathrm{USC1}(x_0) &= \inf_{\alpha_1(\cdot)}\int_0^{+\infty}\Big(1-\alpha_1+e^{-\trajxo(t)}\Big)\,e^{-\lambda t}\,dt\\
    &= \int_0^{+\infty} e^{-\trajxo(t)}\,e^{-\lambda t}\,dt\\
    &= \int_0^{+\infty} e^{-x_0-t}\,e^{-\lambda t}\,dt\\
    &= \frac{e^{-x_0}}{1+\lambda}\,.
\end{aligned}$$
Indeed, the inf is clearly obtained for the choice $\alpha_1\equiv 1$ which implies that $\trajxo(t)=x_0+t$.
Hence $\VF_\mathrm{USC1}(0)=1/(1+\lambda)<1/\lambda=u_\H(0)$ for any $\lambda>0$. 
For symmetry reasons, we have also $\VF_\mathrm{SC2}(x)=\VF_\mathrm{SC1}(-x)$ for $x\leq0$. Then Corollary \ref{cor:uniq.dim1}
implies uniqueness, so that we conclude that for any $x\in\R$,
$$\VFm(x)=\VFp(x)=\frac{e^{-|x|}}{1+\lambda}\,.$$
For $x>0$ we can also compute the Hamiltonian associated:
$$\begin{aligned}
    H_1(x,u,p) &= \sup_{\alpha_1 \in [-1;1]}\big\{ -\alpha_1 p + \lambda u -(1-\alpha_1+e^{-|x|}) \big\}\\
    &= \sup_{\alpha_1 \in [-1;1]}\big\{ -\alpha_1(p-1)\big\} + \lambda u - 1- e^{-|x|} \\
    &= |p-1| +\lambda u - e^{-|x|} -1\,,
\end{aligned}$$
and a direct computation shows that $\VFm$ is indeed a solution of the HJB equation $H_1=0$ in $\Omega_1$.
Of course a similar calculus can be done in $\Omega_2$ which gives $H_2(x,u,p)=|p+1| +\lambda u - e^{-|x|} -1$, for 
$x<0$.

\subsection{``Push-push'' strategies}\label{subsec:push}

We now provide an example where the state constraint solutions are not necessarily the best ones.
Consider
$$\dottrajxo(t)=\alpha_1(t)\text{ in }\Omega_1\,,\quad\dottrajxo(t)=\alpha_2(t)\text{ in }\Omega_2\,,$$
where $\alpha_1(\cdot),\alpha_2(\cdot)\in L^\infty\big(0,+\infty;[-1,1]\big)$ are the controls
and the following costs
$$l_1(x,\alpha_1)=1+\alpha_1\text{ in }\Omega_1\,,\quad l_2(x,\alpha_2)=1-\alpha_2\text{ in }\Omega_2\,,$$
Considering the quantity $\min\big\{u_\H(0),\VF_\mathrm{SC1}(0),\VF_\mathrm{SC2}(0)\big\}$, it is clear that
$$\lambda u_\H(0)=\min_{\mu,\alpha_1,\alpha_2}\Big\{\mu(1+\alpha_1)+(1-\mu)(1-\alpha_2) : \mu \alpha_1+(1-\mu)\alpha_2=0\Big\}=0\,,$$
which is attained for the choice $(\mu,\alpha_1,\alpha_2)=(1/2,-1,+1)$. This corresponds to a ``push-push'' strategy, while
the state constraint solutions cannot reach the min. Indeed,
$$\begin{aligned}
    \VF_\mathrm{SC1}(x_0) & = \inf_{\alpha_1(\cdot)}\int_0^{+\infty}\big( 1+\alpha_1(t)\big)\,e^ {-\lambda t}\,dt\\
    & = \inf_{\alpha_1(\cdot)}\int_0^{+\infty}\big( 1+\dottrajxo(t)\big)\,e^ {-\lambda t}\,dt\\
    & = \inf_{\alpha_1(\cdot)}\Big(\frac{1}{\lambda} + \Big[\trajxo(t)\,e^{-\lambda t}\Big]_0^{+\infty}+
        \lambda\int_0^{+\infty} \trajxo(t)\,e^{-\lambda t}\,dt\Big)\\
    & \geq \frac{1}{\lambda} - x_0
\end{aligned}$$
which implies $\VF_\mathrm{SC1}(0)\geq 1/\lambda>0$, which is attained for $\alpha_1\equiv0$ for $x_0=0$ (the computation is similar for $\VF_\mathrm{SC2}$).

Actually, we can compute explicitly $\VFm$. We first remark that since $l_i \geq 0$, we have $\VFp \geq 0$ and $\VFm \geq 0$.
 For $x\in\Omega_1$, we choose the control
$(\mu,\alpha_1,\alpha_2)=(1/2,-1,+1)$ for $t\geq0$. Of course, $\mu$ and $\alpha_2$ are not relevant before the first hitting time $\tau(x)=|x|$.
This strategy consists in reaching $\H=\{x=0\}$ as fast as possible, then to stay in $\H$ using a ``push-push'' strategy.
With this particular choice,
$$\VFm(x)\leq\int_0^{x}(1+\alpha_1(t))\,e^{-\lambda t}\,dt + \int_{x}^{+\infty} l_\H\big(x(t), a(t)\big)\,e^{-\lambda t}\,dt=0\,,$$
so that $\VFm(x)\leq0$. Hence we deduce that
$\VFm\equiv0$, since the computation for $x<0$ is similar.
Of course this is an obvious solution of the associated HJB equation which reads in this case (for $x>0$)
$$|u_x+1|+\lambda u =1\,.$$
Since the regular strategies give us $\VFm(x)=0$ and $\VFp(x)\geq 0$  the non singular strategies cannot  be better here,
therefore $u_\H(0)=u_\H^\mathrm{reg}(0)$, which implies that uniqueness holds -- see Corollary \ref{cor:uniq.dim1}: $\VFp\equiv\VFm\equiv0$ is
actually the unique solution of \eqref{Bellman-Om}.

\subsection{Non-uniqueness and ``pull-pull strategies''}\label{subsec:pull}

Next we consider the ``converse situation'', with the same dynamics but now with the costs
$$l_1(x,\alpha_1)=1-\alpha_1+|x|\text{ in }\Omega_1\,,\quad l_2(x,\alpha_2)=1+\alpha_2+|x|\text{ in }\Omega_2\,,$$
where $\alpha_1(\cdot),\alpha_2(\cdot)\in L^\infty\big(0,+\infty;[-1,1]\big)$.
Of course, in this example the running cost is not bounded because of the $|x|$-term but a slight modification
would give a similar result. We keep it as it is to make simple computations.
We have first
$$\lambda u_\H(0)=\min_{\mu,\alpha_1,\alpha_2}\Big\{\mu(1-\alpha_1+|0|)+(1-\mu)(1+\alpha_2+|0|) : \mu \alpha_1+(1-\mu)\alpha_2=0\Big\}=0\,,$$
which is attained this time for the ``pull-pull'' strategy $(\mu,\alpha_1,\alpha_2)=(1/2,1,-1)$.

Notice that if now we consider only the  regular trajectories, the "pull-pull" strategies are forbidden. In this case, the restrictions $\alpha_1 \leq 0,\ \alpha_2 \geq 0$ imply
$$\mu(1-\alpha_1)+(1-\mu)(1+\alpha_2)=1-\mu\alpha_1+(1-\mu)\alpha_2  \geq 1 \;. $$
Hence we obtain $u_\H^\mathrm{reg}(0)=1/\lambda$, which is attained for $\alpha_1=\alpha_2=0$.

We  compute now  the state constraint solutions
$$\begin{aligned}
    \VF_\mathrm{SC1}(x_0)&=\inf_{\alpha_1(\cdot)} \int_0^{+\infty} \bigg(1-\alpha_1(t)+\trajxo(t)\bigg)e^{-\lambda t}\,dt\\
    &=\frac{1}{\lambda}+\inf_{\alpha_1(\cdot)} \left\{
    \int_0^{+\infty} -\alpha_1(t)\,e^{-\lambda t}\,dt + \Big[\trajxo(t)\frac{e^{-\lambda t}}{-\lambda}\Big]_0^{+\infty}
        +\frac{1}{\lambda} \int_0^{+\infty}\dottrajxo(t)\,e^{-\lambda t}\,dt\right\}\\
    &=\frac{1+x_0}{\lambda}+\inf_{\alpha_1(\cdot)}\Bigg\{\Big(\frac{1}{\lambda}-1\Big)\int_0^{+\infty}\alpha_1(t)e^{-\lambda t}\,dt\Bigg\}\,.
\end{aligned}$$
For instance, if $\lambda=1$ we get the simple solution $\VF_\mathrm{SC1}(x)=x+1.$ More generally,
if $\lambda>1$, the best strategy consists in choosing $\alpha_1\equiv 1$
and the computation gives
$$\VF_\mathrm{SC1}(x)=\frac{x}{\lambda}+\frac{1}{\lambda^2}\,.$$
For symmetry reasons (or a direct calculation) we have $\VF_\mathrm{SC2}(x)=\VF_\mathrm{SC1}(-x)$ for $x\leq0$.

If $0<\lambda<1$ then we compute as follows, using the hitting time $\tau(x_0)$ and the ``pull-pull'' strategy for $t>\tau(x_0)$
$$\begin{aligned}
\VF_\mathrm{SC1}(x_0)&=\inf_{\alpha_1(\cdot)} \int_0^{+\infty} \bigg(1-\dottrajxo(t)+\trajxo(t)\bigg)e^{-\lambda t}\,dt\\
&=\frac{1}{\lambda}+\inf_{\alpha_1(\cdot)}\Bigg\{ -\big[ \trajxo(t)e^{-\lambda t}\big]_0^{\tau(x_0)}
+(1-\lambda)\int_0^{\tau(x_0)} \trajxo(t)\,e^{-\lambda t}\,dt \Bigg\}\\
&=\frac{1}{\lambda}+ x_0 + \inf_{\alpha_1(\cdot)}\Big\{(1-\lambda)\int_0^{\tau(x_0)} \trajxo(t)\,e^{-\lambda t}\,dt \Big\}\,.
\end{aligned}$$
So, the best strategy here consists in choosing $\alpha_1\equiv-1$ to reach $\{x=0\}$ as fast as possible, then to stay there for later times.
This gives $\tau(x_0)=x_0$ and
$$\begin{aligned}
\VF_\mathrm{SC1}(x_0)&=\frac{1}{\lambda}+x_0+(1-\lambda)\int_0^{x_0} (x_0-t)\,e^{-\lambda t}\,dt\\
 &= \frac{x_0+1}{\lambda}-\frac{1-\lambda}{\lambda^2}\big(1-e^{-\lambda x_0}\big)\,.
\end{aligned}$$
Hence $\VF_\mathrm{SC1}(0)=1/\lambda^2>0$ for $\lambda\geq1$, and $\VF_\mathrm{SC1}(0)=1/\lambda>0$ for $\lambda<1$.
Here also, by symmetry we have $\VF_\mathrm{SC2}(x)=\VF_\mathrm{SC1}(-x)$ for $x\leq0$, so in any case, the ``pull-pull'' strategy is the best
$$\min\big\{u_\H(0),\VF_\mathrm{SC1}(0),\VF_\mathrm{SC2}(0)\big\}=u_\H(0)=0\,.$$
Then if we set
$$u_\sharp(x):=\begin{cases} \VF_\mathrm{SC1}(x) & \text{if }x>0\,,\\
\VF_\mathrm{SC2}(x) & \text{if }x\leq0\,,\end{cases}$$
by Theorem \ref{thm:structure} we have a subsolution (and actually a solution) in $\R$. But of course this solution does not satisfy the condition
$$u_\sharp(0)\leq\min\big\{u_\H(0),\VF_\mathrm{SC1}(0),\VF_\mathrm{SC2}(0)\big\}\,.$$
Notice however that for any $\lambda>0$ we always have $u_\sharp(0)\leq u_\H^\mathrm{reg}(0)$, so that
$$u_\sharp(0)=\min \big\{u_\H^\mathrm{reg}(0),\VF_\mathrm{SC1}(0),\VF_\mathrm{SC2}(0)\big\}=\VFp(0)\,.$$
Hence by uniqueness
in $\Omega_1$ and $\Omega_2$, we conclude that $u_\sharp\equiv\VFp$.

In order to prove non-uniqueness let us compute explicitly the minimal solution $\VFm$.
For $x>0$ we have, denoting by $\tau(x_0)$ the exit time for the trajectory starting from $x_0$
$$\begin{aligned}
    \VFm(x_0)&=\inf_{\alpha_1(\cdot)}\int_0^{\tau(x_0)}\bigg(1-\alpha_1(t)+\trajxo(t)\bigg)e^{-\lambda t}\,dt\\
     &=\frac{x_0}{\lambda}+\inf_{\alpha_1(\cdot)}\Bigg\{\frac{1-e^{-\lambda\tau(x_0)}}{\lambda}+
	\big(\frac{1}{\lambda}-1\big)\int_0^{\tau(x_0)}\alpha_1(t)\,e^{-\lambda t}\,dt\Bigg\}\,,\\
\end{aligned}$$
where we are using here the ``pull-pull'' strategy on $\H=\{x=0\}$ which keeps the trajectory at $x=0$ for a null cost after $\tau(x_0)$.

If $\lambda\leq 1$, the optimal control is obtained for $\alpha=-1$ which minimizes at the same time
$\tau(x_0)$ and the integral multiplied by $(1/\lambda-1)$. For instance, for $\lambda=1$ we get
$$\VFm(x_0)=x_0+\int_0^{x_0} e^{-t}\,dt=1+x_0-e^{-x_0}<u_\sharp(x_0)=x_0+1\,.$$
Of course the solution is symmetric for $x_0<0$.
In the case $\lambda<1$, the explicit solution can be computed the same way with $\alpha\equiv-1$
which gives
$$\VFm(x)=\frac{|x|}{\lambda}+\frac{2\lambda-1}{\lambda^2}\Big(1-e^{-\lambda |x|}\Big)\,.$$
The related Hamiltonian is computed as follows: for $x>0$ we have
$$\begin{aligned}
H_1(x,u,u_x)&=\sup_{\alpha_1}\bigg\{-\alpha_1 u_x + \lambda u -\big(1-\alpha_1+x\big)\bigg\}\\
&=\sup_{\alpha_1}\bigg\{-\alpha_1 (u_x-1)\bigg\} + \lambda u -x-1\\
&= |u_x-1|+\lambda u -x-1\,.
\end{aligned}$$
It can be checked that for all $\lambda\leq1$, $\VFm$ is indeed
the solution of $H_1=0$ with $\VFm(0)=0$. Finally, since $\VFm<\VFp$, uniqueness fails in this situation.

\section{Approximations, convergence} \label{sec:approx}

Since Problem~(\ref{Bellman-Om})-(\ref{Bellman-H-sub})-(\ref{Bellman-H-sup}) does not have a unique solution, the convergence of approximation schemes is not clear in general : indeed some approximations may convergence to $\VFm$, some others to $\VFp$ and we may even have convergence to other solutions. We will consider below two different cases. The first one is the case of Filippov's approximations; since it intuitively corresponds to a relaxation (where, roughly speaking, a larger set of controls is used), the answer is rather clear: we have convergence to the minimal solution $\VFm$. On the contrary, we have no general answer for the vanishing viscosity method since we have no simple interpretation which may indicate an approximation from above or below and therefore a convergence to either $\VFm$ or $\VFp$ (see however our conjecture below).

\subsection{Filippov's approximation}

A natural approximation of the above problem consists in introducing a continuous, increasing function $\varphi : \R \to \R$ such that
$$ \lim_{s \to -\infty}\varphi(s) = 0\quad \hbox{and}\quad\lim_{s \to +\infty}\varphi(s) = 1\; ,$$
and to study the behavior of the solution $\ue :\R^N \to \R$ of
\begin{equation}\label{eqn:vpe}
 \vpe (x_N) H_1(x,\ue, D\ue)+ (1-\vpe (x_N)) H_2(x,\ue, D\ue)=0\quad \hbox{in  }\R^N\; ,
\end{equation}
where $\vpe (x_N):=\varphi(x_N/\eps)$.

Contrary to the vanishing viscosity approach (see below), this method keeps record on what is happening on the hyperplane by "spreading" it
and tracks the controls that fulfill the compatibility condition between the two vector fields $b_1(x,\alpha_1)$ and $b_2(x,\alpha_2)$. Hence,
even the singular strategies are taken into account in the limit so that we obtain $\VFm$.

\begin{theo}  \label{cv-vpe}
Assume [H0], [H1] and [H2]. There exists a unique Lipschitz continuous solution $\ue$ of (\ref{eqn:vpe}). Moreover, as $\e \to 0$, $\ue \to \VFm$ locally uniformly in $\R^N$.
\end{theo}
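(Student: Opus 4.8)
The plan is to combine a stability (relaxed-limit) argument with the uniqueness machinery of Section~\ref{sec:uni}. First I would settle existence, uniqueness and uniform estimates for $\ue$. The key observation is that \eqref{eqn:vpe} is itself a standard Bellman equation: since the two suprema defining $H_1,H_2$ are independent,
\begin{equation*}
\vpe(x_N)H_1(x,u,p)+(1-\vpe(x_N))H_2(x,u,p)
=\sup_{(\alpha_1,\alpha_2)\in A_1\times A_2}\big\{-\tilde b_\eps(x,\alpha_1,\alpha_2)\cdot p+\lambda u-\tilde l_\eps(x,\alpha_1,\alpha_2)\big\}\,,
\end{equation*}
with averaged dynamic $\tilde b_\eps:=\vpe(x_N)b_1+(1-\vpe(x_N))b_2$ and cost $\tilde l_\eps:=\vpe(x_N)l_1+(1-\vpe(x_N))l_2$. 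Because $\overline{B(0,\delta)}\subset\vpe(x_N)B_1(x)+(1-\vpe(x_N))B_2(x)$ for every $\eps$ and $x$, the controllability in [H2] is inherited by this averaged system \emph{uniformly in} $\eps$; hence $\ue$ is the value function of that control problem and enjoys the same uniform bound and Lipschitz estimate as the value functions (with the $\eps$-independent constant $K=2M/\delta$). Uniqueness follows from the classical comparison result for coercive convex Hamiltonians.

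By Ascoli's theorem, $\{\ue\}$ converges, along a subsequence, locally uniformly to a bounded Lipschitz function $u$, and I would next identify the equations it solves. Away from $\H$, say for $x_N>0$ fixed, $\vpe(x_N)=\varphi(x_N/\eps)\to1$, so the Hamiltonian in \eqref{eqn:vpe} converges locally uniformly to $H_1$ and, by stability of viscosity solutions, $u$ solves $H_1=0$ in $\Omega_1$, and similarly $H_2=0$ in $\Omega_2$. On $\H$ one exploits that the weight $\theta_\eps:=\vpe((x_\eps)_N)$ lies in $[0,1]$: testing $u$ from above by $\phi$ at $x_0\in\H$ produces $x_\eps\to x_0$ with $\theta_\eps H_1+(1-\theta_\eps)H_2\leq0$, and a convex combination of $H_1,H_2$ being $\leq0$ forces $\min(H_1,H_2)\leq0$; passing to the limit gives \eqref{Bellman-H-sub}, and symmetrically \eqref{Bellman-H-sup}. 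Thus $u$ is a continuous solution of \eqref{Bellman-Om}-\eqref{Bellman-H-sub}-\eqref{Bellman-H-sup}, so by Theorem~\ref{teo:sotto} it already satisfies $\HTreg\leq0$ on $\H$.

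The heart of the argument, and the main obstacle, is to upgrade this to \eqref{Bellman-H}, i.e. $\HT(x,u,D_\H u)\leq0$ on $\H$ — the property that selects $\VFm$ rather than $\VFp$. Since $\HTreg\leq0$ is known, only the \emph{singular} controls $\a=(\alpha_1,\alpha_2,\mu)\in A_0(x_0)$, with $\beta_1:=b_1(x_0,\alpha_1)\cdot e_N>0$, $\beta_2:=b_2(x_0,\alpha_2)\cdot e_N<0$ and $\mu=-\beta_2/(\beta_1-\beta_2)\in(0,1)$, remain to be treated. For these one \emph{cannot} work with the limit problem, since $\mu$ has to be realized as a value of $\vpe$ inside the layer $\{|x_N|\lesssim\eps\}$, where $\varphi(\cdot/\eps)$ sweeps all of $(0,1)$. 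Concretely, I would pick $\bar s$ with $\varphi(\bar s)=\mu$ (intermediate value theorem) and test the subsolution $\ue$ against
\begin{equation*}
\Psi_\eps(x):=\phi(x')+\frac{1}{\eps}\Big(\frac{x_N}{\eps}-\bar s\Big)^2+|x'-x_0'|^2\,,
\end{equation*}
where $x_0'$ is a strict local maximum point of $x'\mapsto u(x',0)-\phi(x')$. The quadratic layer term forces any maximum point $x_\eps$ of $\ue-\Psi_\eps$ to satisfy $(x_\eps)_N=\eps(\bar s+o(1))$, hence $\vpe((x_\eps)_N)\to\mu$, while $x_\eps\to x_0$ and $\ue(x_\eps)\to u(x_0)$. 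The normal component $p_N^\eps:=\partial_{x_N}\Psi_\eps(x_\eps)$ is only bounded, by the Lipschitz constant of $\ue$ (not small), so along a further subsequence $p_N^\eps\to p_N^*$; but this is harmless, because the subsolution inequality $\vpe H_1+(1-\vpe)H_2\leq0$ at $x_\eps$, after bounding each $H_i$ below by its $(\alpha_1,\alpha_2)$-term and letting $\eps\to0$, yields
\begin{equation*}
-b_\H(x_0,\a)\cdot(D_\H\phi(x_0'),p_N^*)+\lambda u(x_0)-l_\H(x_0,\a)\leq0\,,
\end{equation*}
and the contribution $p_N^*\,b_\H(x_0,\a)\cdot e_N$ vanishes \emph{precisely} because $\a\in A_0(x_0)$. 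This gives the inequality for every singular $\a$, and the remaining non-strict configurations follow from the continuity/convexity closure argument already used in Theorem~\ref{teo:HJ} (Case~5). Hence $u$ satisfies \eqref{Bellman-H}.

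It then remains to conclude by uniqueness. The limit $u$ is a subsolution of \eqref{Bellman-Om}-\eqref{Bellman-H-sub}-\eqref{Bellman-H-sup}-\eqref{Bellman-H} and a supersolution of \eqref{Bellman-Om}-\eqref{Bellman-H-sub}-\eqref{Bellman-H-sup}. Applying Theorem~\ref{Uni-dim-N} with $u$ as subsolution and $\VFm$ as supersolution gives $u\leq\VFm$; applying it with $\VFm$ (which satisfies \eqref{Bellman-H} by Theorem~\ref{teo:HJ}) as subsolution and $u$ as supersolution gives $\VFm\leq u$. Therefore $u=\VFm$, and since the limit is independent of the extracted subsequence, the whole family $\ue$ converges locally uniformly to $\VFm$. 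I expect the third paragraph — the boundary-layer test function capturing the singular strategies, where one must verify localization at height $\eps\bar s$ and correctly handle the non-vanishing normal slope $p_N^*$ — to be the delicate step; everything else is standard stability and an appeal to the comparison theorem.
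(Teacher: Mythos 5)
Your proposal is correct and follows essentially the same route as the paper: uniform (in $\eps$) Lipschitz bounds and Ascoli, stability off $\H$, and then the crucial boundary-layer test function $\phi(x')+\frac{1}{\eps}\big(\frac{x_N}{\eps}-\bar s\big)^2$ with $\varphi(\bar s)=\mu$, whose bounded normal slope is killed in the limit precisely because $b_\H(x_0,\a)\cdot e_N=0$, yielding the extra subsolution inequality \eqref{Bellman-H} and hence convergence to $\VFm$ by the comparison result. The only cosmetic differences are that you invoke Theorem~\ref{Uni-dim-N} twice instead of Corollary~\ref{sous-max-sur-min}(iii), and that you treat the endpoint cases $\mu\in\{0,1\}$ by the Case~5 closure argument, a point the paper passes over silently.
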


\begin{proof} The first part of the theorem is clear since the Hamiltonian of Equation (\ref{eqn:vpe}) is coercive: by standard arguments, it is straightforward to obtain the existence and uniqueness of the $\ue$'s and to prove that they are equibounded and equi-Lipschitz continuous.

Applying Ascoli's Theorem, we may assume that the sequence $(\ue)_\e$ converges locally uniformly to a bounded, Lipschitz continuous function $u$ and it is also easy to show that $u$ satisfies (\ref{Bellman-Om})-(\ref{Bellman-H-sub})-(\ref{Bellman-H-sup}).

In order to conclude, we just have to show that $u$ is also a subsolution of (\ref{Bellman-H}). Indeed, if this is true, the result follows from Corollary~\ref{sous-max-sur-min} (iii).

Let $\phi=\phi(y')$ be a smooth function and let $x'$ be a strict local maximum point of $u(y',0)-\phi(y')$. We have to prove
$$\HT(x,u(x),D_\H\phi(x'))\leq 0\; ,$$
where $x=(x',0)$, i.e.
$$ - b_\H(x,\a)\cdot D_\H\phi(x')  + \lambda u (x) -  l_\H(x,\a)\leq 0\; ,
$$
for any $\a=(\aluno,\aldue,\mu) \in A_0(x)$ for which $(\mu b_1(x,\aluno)+(1-\mu)b_2(x,\aldue)) \cdot e_N=0$.

Since $\varphi$ is a continuous, increasing function, there exists $s\in \R$ such that $\varphi(s) = \mu$. We introduce the function
$$ \ue(y)-\phi(y') - \frac{1}{\e}|\frac{y_N}{\e}-s|^2\; .$$
By standard arguments, this function achieves a local maximum at a point $\xe$ close to $x$ and when $\e \to 0$, we have
$$\frac{1}{\e}|\frac{(\xe)_N}{\e}-s|^2 \to 0\; .$$
Moreover, using that $\ue$ is Lipschitz continuous, the derivative of this term $\displaystyle d_N := \frac{2}{\e^2}(\frac{(\xe)_N}{\e}-s)$ is bounded.

Now we write the viscosity subsolution inequality
$$
 \vpe ((\xe)_N) H_1(\xe,\ue(\xe), D_\H\phi(\xe')+d_Ne_N)+ (1-\vpe ((\xe)_N)) H_2(\xe,\ue (\xe), D_\H\phi(\xe')+d_Ne_N)\leq 0\; ,
$$
and we notice that, on one hand,
$$ \vpe ((\xe)_N) = \varphi(s) + o(1)\; ,$$
and, on the other hand, the $H_i$ terms are bounded since the gradients are bounded.

This yields
$$
 \varphi (s) H_1(\xe,\ue(\xe), D_\H\phi(\xe')+d_Ne_N)+ (1- \varphi (s) ) H_2(\xe,\ue (\xe), D_\H\phi(\xe')+d_Ne_N)\leq o(1)\; ,
$$
and, using the form of the $H_i$'s
$$ \varphi (s)\left(-b_1(\xe,\aluno)\cdot (D_\H\phi(\xe')+d_Ne_N)  + \lambda \ue (\xe) -  l_1(\xe,\aluno)\right)
+ $$
$$(1- \varphi (s) ) \left(-b_2(\xe,\aldue)\cdot (D_\H\phi(\xe')+d_Ne_N)  + \lambda \ue (\xe) -  l_2(\xe,\aldue)\right)\leq o(1)\; .
$$
Because of the choice of $s$ and since $\a=(\aluno,\aldue,\mu) \in A_0(x)$, this inequality is nothing but
$$ -(\mu b_1(\xe,\aluno)+(1-\mu)b_2(\xe,\aldue))\cdot D_\H\phi(\xe') + \lambda \ue (\xe) -  (\mu l_1(\xe,\aluno)+(1-\mu)l_2(\xe,\aldue))
\leq o(1)\; .
$$
And the conclusion follows by letting $\e$ tends to $0$.
\end{proof}

\subsection{Vanishing viscosity approximation}

In this section, we show that the vanishing viscosity approximation may converge to $\VFp$ by coming back to the example of Subsection \ref{subsec:pull} where non-uniqueness happens because of some singular (``pull-pull'') strategies which give a lower cost. Such strategies are rather instable and it is natural to think that, if we add a brownian perturbation, the trajectories will naturally tend to go away from $x={0}$. From the pde viewpoint, this instability is reflected in the fact that the vanishing viscosity method does not give $\VFm$ in the limit, but $\VFp$. More precisely we have

\begin{pro}\label{pro:viscous}
    Let us assume that we are in the framework of Subsection \ref{subsec:pull} and, for any $\eps>0$, consider
    the solution $u_\eps$ of the following problem
    \begin{equation}\label{pb:viscous}
        -\eps u_\eps'' + H(x,u_\eps,u_\eps') = 0\quad\text{in}\quad\R\,,
    \end{equation}
    where $H=H_1$ in $\Omega_1$ and $H_2$ in $\Omega_2$.
    Then, as $\eps\to0$, the sequence $(u_\eps)_\eps$ converges locally uniformly to $\VFp$ in $\R$.
\end{pro}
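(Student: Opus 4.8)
The plan is to trap the subsequential limits of $(u_\eps)_\eps$ between $\VFp$ from above and $\VFp$ from below, the lower bound being the only non-routine point. First I would record the standard facts: for each $\eps>0$, \eqref{pb:viscous} has a unique smooth solution $u_\eps$, and since $H_1,H_2$ are coercive in the gradient variable and strictly increasing in $u$ (because $\lambda>0$), the family $(u_\eps)_\eps$ is locally equi-bounded and locally equi-Lipschitz (the linear growth of the running cost only forces a linear growth of the $u_\eps$, without spoiling the local estimates). By Ascoli's theorem every subsequence has a locally uniform limit $u$, and by the standard stability of viscosity solutions under vanishing viscosity, $u$ is a subsolution of \eqref{Bellman-Om}-\eqref{Bellman-H-sub}-\eqref{Bellman-H-sup} (it solves $H_i=0$ in $\Omega_i$ and $\min(H_1,H_2)\leq0$ holds on $\H$ in the relaxed sense). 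Proposition~\ref{pro:dim1.subsol}, whose proof adapts verbatim to the present linear-growth setting, then yields $u(0)\leq\VFp(0)$.

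The core of the argument is to show $\VFp\leq u_\eps$ for every $\eps>0$, and here the ``pull-pull'' geometry is decisive. I claim that $\VFp$ is itself a viscosity subsolution of \eqref{pb:viscous}. Indeed, on each side $\{x>0\}$ and $\{x<0\}$ the function $\VFp$ coincides with the state constraint solution $\VF_\mathrm{SC1}$, resp. $\VF_\mathrm{SC2}$, and a direct computation with the explicit formulas of Subsection~\ref{subsec:pull} shows that these are convex (linear when $\lambda\geq1$, strictly convex when $\lambda<1$); hence $-\eps\VFp''\leq0$ there and, since $H_i(x,\VFp,\VFp')=0$ away from the interface, $\VFp$ is a classical subsolution of \eqref{pb:viscous} on $\Omega_1\cup\Omega_2$.

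At the interface $x=0$ the decisive feature is that $\VFp$ presents a \emph{downward convex kink}: by the same formulas $\VFp$ has a strict local minimum at $0$, with a negative left slope and a positive right slope. Consequently no $C^1$ (a fortiori no $C^2$) test function can touch $\VFp$ from above at $0$, so the subsolution inequality at $0$ is \emph{vacuously} satisfied, and $\VFp$ is a genuine viscosity subsolution of \eqref{pb:viscous} on all of $\R$. Comparing with the smooth solution $u_\eps$ then gives $\VFp\leq u_\eps$: in the doubling-of-variables argument the convex kink forces the contact point on the $\VFp$-side to stay away from $0$, so the comparison reduces to the two open half-lines, where $u_\eps$ is a classical solution and standard comparison for the (non-degenerate, proper) problem \eqref{pb:viscous} applies. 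Letting $\eps\to0$ gives $u(0)\geq\VFp(0)$.

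Together with the upper bound this forces $u(0)=\VFp(0)$, and since $u$ solves $H_i=0$ in each $\Omega_i$, Theorem~\ref{thm:structure} (uniqueness of the Dirichlet problem in each half-line with prescribed value at $0$) yields $u\equiv\VFp$; as the limit is independent of the extracted subsequence, the whole family converges locally uniformly to $\VFp$. The main obstacle, and the conceptual content of the statement, is exactly the verification that the convex kink of $\VFp$ is compatible with the second-order operator whereas $\VFm$ is not: a direct computation shows that $\VFm$ is concave on each side (for instance $\VFm''=-e^{-x}<0$ for $x>0$ when $\lambda=1$) and fails the supersolution test at $0$, which is the analytic signature of the instability of the singular ``pull-pull'' strategy under a Brownian perturbation. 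This dissymmetry is precisely why the vanishing viscosity method selects the maximal solution $\VFp$ rather than $\VFm$.
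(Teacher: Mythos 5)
Your overall architecture coincides with the paper's: in both proofs the decisive point is that $\VFp$ is convex, with a kink at $x=0$ of the favorable sign, so that the second-order term only helps and $\VFp$ is a subsolution of \eqref{pb:viscous}; this yields $\VFp\le u_\eps$, while the upper bound comes from the fact that any vanishing-viscosity limit is a subsolution of \eqref{Bellman-Om}-\eqref{Bellman-H-sub}-\eqref{Bellman-H-sup} and hence lies below $\VFp$ (the paper uses half-relaxed limits and Corollary~\ref{sous-max-sur-min} rather than Ascoli plus the structure theorem, but that is cosmetic). Where you genuinely diverge is in the implementation of the comparison $\VFp\le u_\eps$: you read the kink in the viscosity sense (no $C^1$ function touches $\VFp$ from above at $0$, so the subsolution test there is vacuous) and invoke a touching/doubling argument, whereas the paper reads it distributionally ($(\VFp)''\ge 0$ as a measure, the atom at $0$ having the good sign), subtracts the two equations to get $-\eps w''+\lambda w-C|w'|\le 0$ for $w=\VFp-u_\eps$ in the sense of distributions, and kills the truncation $w_\eta=(w-\eta(|x|^2+1))_+$ by testing against $w_\eta^{2n+1}$ with $n$ large.

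The step you should not wave through is precisely the one that forces the paper into this $L^p$ energy argument: the localization at infinity. Both $\VFp$ and $u_\eps$ have only linear growth, so $\VFp-u_\eps$ is not known to be bounded above. If you penalize with $\eta(|x|^2+1)$ the supremum is attained, but at the contact point $x_0$ the gradient correction is $2\eta x_0$, and the constraint $\eta|x_0|^2\lesssim 1+|x_0|$ only gives $\eta|x_0|=O(1)$, not $o(1)$, so the resulting bound $\lambda(\VFp-u_\eps)(x_0)\le 2\eta(\eps+C|x_0|)$ does not vanish as $\eta\to0$; if instead you penalize with $\eta(|x|^2+1)^{1/2}$ the correction is $O(\eta)$ but the supremum may no longer be attained. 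This is fixable (e.g.\ with an exponential barrier $\sigma e^{\gamma(|x|^2+1)^{1/2}}$ with $\gamma$ small depending on $\lambda$, $C$, $\eps$, or by the paper's compactly supported truncation), but as written ``standard comparison applies'' hides the only delicate point of the proof. Two smaller remarks: $u_\eps$ is not smooth --- it is only $W^{2,r}_{\mathrm{loc}}$ because $H$ jumps across $x=0$ --- though this is harmless in your scheme since the contact point avoids $0$ and $u_\eps\in C^2(\Omega_1\cup\Omega_2)$; and your closing claim that $\VFm$ fails the supersolution test at $0$ is correct, but the obstruction is the \emph{convex} kink of $\VFm$ at $0$ (slopes $-2$ and $+2$ when $\lambda=1$), which lets test functions from below have arbitrarily large second derivative, rather than its concavity on each half-line.
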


Before proceeding with the proof, let us precise that by a solution $u_\eps$, we mean a distributional solution
$u_\eps\in W^{1,\infty}_\mathrm{loc}(\R)$.
In particular, the possible discontinuity of $H$ when crossing $\H$ is not a problem in the integrated version of the equation:
for any $\varphi\in C^1_0(\R)$,
$$\eps\int_{\R}u_\eps'\varphi' + \int_{\R}H(x_\eps,u_\eps,u_\eps')\varphi=0\,.$$
This explains why we will recover in the limit only the strategies already encoded in the equations in $\Omega_1$ and $\Omega_2$,
and not the singular ones.

\begin{lem}
    For any $\eps>0$, there exist a unique solution of \eqref{pb:viscous} $u_\eps$ in $W^{2,r}_\mathrm{loc}(\R)$ for any $r>1$, which has a at most linear growth.
\end{lem}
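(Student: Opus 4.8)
The plan is to read \eqref{pb:viscous} as a one–dimensional, uniformly elliptic quasilinear ODE and to exploit the structure of the Hamiltonian coming from Subsection~\ref{subsec:pull}. Writing $s(x):=\mathrm{sign}(x)$, one has for $x\neq0$
\[
H(x,u,p)=|p-s(x)|+\lambda u-|x|-1 ,
\]
so $H$ is convex and coercive in $p$ (Lipschitz in $p$ with constant $1$), strictly increasing in $u$ at rate $\lambda>0$, and Lipschitz in $x$ on each side of $\H=\{0\}$, the only discontinuity being the jump of $s(x)$ at $x=0$. Since a single point is negligible, this jump is harmless in the $W^{1,\infty}_\mathrm{loc}$ (a.e.) formulation. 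The first remark is a bootstrap: any $w\in W^{1,\infty}_\mathrm{loc}(\R)$ solving \eqref{pb:viscous} satisfies $\eps w''=H(x,w,w')\in L^\infty_\mathrm{loc}(\R)$, hence $w\in W^{2,\infty}_\mathrm{loc}(\R)\subset W^{2,r}_\mathrm{loc}(\R)$ for every $r$. It therefore suffices to produce a locally Lipschitz solution with linear growth and to prove uniqueness in that class.

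For existence I would first exhibit global barriers with linear growth. The smooth convex function $\bar u(x):=\lambda^{-1}\sqrt{x^2+1}+C^+$ has $0\le\bar u''\le\lambda^{-1}$ and $\lambda\bar u-|x|\ge\lambda C^+$, so for $C^+$ large $-\eps\bar u''+H(x,\bar u,\bar u')\ge0$ on each side of $\H$, making it a classical supersolution. The function $\underline u(x):=\lambda^{-1}|x|-C^-$ satisfies $\underline u''=0$ away from $0$ and, on each side, $H(x,\underline u,\underline u')=|\lambda^{-1}-1|-\lambda C^--1\le0$ for $C^-$ large; the convex kink at $x=0$ cannot be touched from above by a smooth test function, so the subsolution condition there is vacuous and $\underline u$ is a subsolution. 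For $C^\pm$ large one has $\underline u\le\bar u$. I would then solve the Dirichlet problem for \eqref{pb:viscous} on each bounded interval $(-R,R)$, with boundary data between the barriers, by the classical sub/supersolution method for quasilinear elliptic equations (monotone iteration, or Perron together with the comparison principle discussed below), obtaining $u_R$ with $\underline u\le u_R\le\bar u$ on $(-R,R)$.

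The key quantitative ingredient is a Lipschitz bound on $u_R$, uniform in $R$ on compacts. I would get it by a Bernstein-type argument: after a standard regularization of $H$ in $(x,p)$, at an interior maximum $x_0$ of $u_R'$ one has $u_R''(x_0)=0$ and $u_R'''(x_0)\le0$; differentiating the equation yields $\eps u_R'''=H_x+\lambda u_R'$ at $x_0$, whence $\lambda u_R'(x_0)\le -H_x\le1$, i.e. $u_R'(x_0)\le\lambda^{-1}$, and symmetrically a lower bound, so $|u_R'|\le\lambda^{-1}$ up to the boundary contribution, uniformly in $\eps$ and $R$ (alternatively this follows from coercivity of $H$ by comparison with affine barriers). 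With the $L^\infty$ bound from the barriers and this equi-Lipschitz bound, Ascoli's theorem and the bootstrap give a subsequence $u_R\to u_\eps$ in $C^1_\mathrm{loc}(\R)$, with $u_\eps\in W^{2,\infty}_\mathrm{loc}(\R)$ solving \eqref{pb:viscous} on all of $\R$ and $\underline u\le u_\eps\le\bar u$, i.e. having at most linear growth.

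Uniqueness is the step I expect to be the main obstacle, because of the unbounded domain. If $u,v$ are two solutions of linear growth, then $w:=u-v\in W^{2,\infty}_\mathrm{loc}$ and, subtracting the equations and writing the difference of the $p$-terms as $c(x)w'$ with $|c(x)|\le1$ measurable, $w$ solves a.e. the linear homogeneous equation
\[
-\eps w''+c(x)\,w'+\lambda w=0 .
\]
To rule out a positive supremum I localize with $\Phi(x):=\cosh(\beta x)$: for $\beta$ small one checks $-\eps\Phi''+c\Phi'+\lambda\Phi\ge(\lambda-\eps\beta^2-\beta)\Phi>0$, so for each $\delta>0$ the function $w-\delta\Phi$ tends to $-\infty$ at $\pm\infty$ and attains a global maximum at some finite $x_\delta$; there the maximum-principle inequality for the linear operator forces $\lambda(w-\delta\Phi)(x_\delta)\le0$, hence $w\le\delta\Phi$ everywhere, and letting $\delta\to0$ gives $w\le0$, so by symmetry $w\equiv0$. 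This same $\cosh$-barrier furnishes the comparison principle invoked in the existence step. The only delicate point is that $c(x)$ is merely measurable and $w$ merely $C^{1,1}_\mathrm{loc}$, so the maximum-principle step must be read in the a.e. sense (equivalently, in the viscosity sense for the original $H$), which is where I would concentrate the rigour.
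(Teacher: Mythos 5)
Your argument is correct in substance but takes a genuinely different route from the paper's. The paper disposes of this lemma in a few lines: it first smooths the discontinuity at $x=0$ by a Filippov-type approximation (replacing $H$ by $\varphi_\eta(x_N)H_1+(1-\varphi_\eta(x_N))H_2$ with $\varphi_\eta$ continuous), so that the nonlinearity becomes continuous in all variables and at most linear in $p$, then invokes the classical existence, uniqueness and $W^{2,r}_\mathrm{loc}$ theory from Lions' book \cite{L}, and obtains the (uniform in $\eps$) linear growth from the barriers $\pm K(|x|^2+1)^{1/2}$ --- essentially your $\bar u$ together with a smooth, concave version of your $\underline u$. You instead work directly with the discontinuous Hamiltonian in the a.e.\ strong-solution setting: explicit barriers, exhaustion by intervals $(-R,R)$, a gradient bound, compactness, and a $\cosh$-barrier uniqueness argument. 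What your route buys is a self-contained proof that makes transparent why the jump at $x=0$ is harmless (it lives on a null set and, in the uniqueness step, is absorbed into a measurable drift $c(x)$ with $|c|\leq 1$); what the paper's route buys is that all the delicate regularity and maximum-principle issues are delegated to the classical continuous theory, at the price of an extra passage to the limit $\eta\to 0$. The ``delicate point'' you flag at the end is exactly Bony's maximum principle and, in one dimension, is elementary: at a maximum point $x_\delta$ of $v:=w-\delta\Phi$ one cannot have $v''\geq\sigma>0$ a.e.\ near $x_\delta$ (else $v'$, being absolutely continuous, would be increasing and $v$ strictly convex there), so there exist $x_k\to x_\delta$ at which the equation holds, $v''(x_k)\leq 1/k$ and $v'(x_k)\to 0$, and this closes your inequality $\lambda v(x_\delta)\leq 0$ even though $c$ is discontinuous at $0$.

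One caveat: your claim that the Bernstein bound $|u_R'|\leq\lambda^{-1}$ is uniform in $\eps$ is overstated. To differentiate the equation you must also regularize the jump of $s(x)$ at the interface, and then $H_x$ is of order $\eta^{-1}$ on the smoothing layer, so the estimate $\lambda u_R'(x_0)\leq -H_x$ is only clean on each side of $\H$ and does not control the derivative across $x=0$ uniformly. This does not affect the lemma, which is a fixed-$\eps$ statement: for fixed $\eps$ a gradient bound follows from the $L^\infty$ bound alone, e.g.\ by combining $\eps|u''|\leq |u'|+\lambda\|u\|_\infty+|x|+2$ with the interpolation inequality $\|u'\|_{L^\infty(I)}\leq C\big(h^{-1}\|u\|_{L^\infty(I)}+h\|u''\|_{L^\infty(I)}\big)$ on intervals $I$ of length $h\sim\eps$. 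With that adjustment your proof is complete and consistent with the statement.
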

\begin{proof} The proof follows classical methods and we are just going to sketch it. For more details, we refer the reader to the book of P.L Lions \cite{L} where similar results are obtained. The easiest way to prove the existence of  $u_\eps$ is by using first a  Filippov-type approximation: in this way, the nonlinearity becomes continuous w.r.t. all variables and, since $H_1, H_2$ are Lipschitz continuous (therefore at most linear in $p$), one easily builds a solution which is in $W^{2,r}_\mathrm{loc}(\R)$ for all $r>1$ and which grows at most linearly. We point out that a (uniform in $\e$) linear growth can be obtained by remarking that, for $K>0$ large enough, $\pm K(|x|^2+1)^{1/2}$ are respectively sub and supersolutions of \eqref{pb:viscous}.\end{proof}

\begin{proof}[Proof of Proposition \ref{pro:viscous}]
    We notice first that the solution $\VFp$ which is computed in Subsection \ref{subsec:pull} is always convex. Indeed, this is clear if $\lambda>1$
    since in this case $\VFp(x)=|x|/\lambda+1/\lambda^2$ and a straightfoward calculus shows that for $\lambda<1$,
    $(\VFp)''(x)= \frac{2\delta_0}{\lambda}+(1-\lambda)e^{-\lambda|x|}\geq0$ in the sense of distributions so that
    $$-\eps(\VFp)''(x)+H\big(x,\VFp(x),(\VFp)'(x)\big)=-\eps(\VFp)''(x)\leq0\,.$$
    Now we consider
    $w:=\VFp-u_\eps$. Substracting the inequations, and using the Lipschitz continuity of $H(x,u,p)$ in $p$ (for all $u$ and a.e. in $x$), there exists a constant $C>0$ such that
    $$-\eps w'' + \lambda w - C|w'|\leq0\quad\text{in}\quad\R\,.$$
    We first notice that both $\VFp$ and $u_\eps$ grow at most linearly, so does $w$. If we set
    $w_\eta:=(w-\eta(|x|^2+1))_+$ for some small $\eta>0$, then $w_\eta$ is compactly supported and therefore in $W^{1,\infty}(\R)$. Moreover, by similar arguments as in the proof of Lemma~\ref{soussol-inf}, $w_\eta$ still satisfies
    $$-\eps w_\eta'' + \lambda w_\eta - C|w_\eta'|\leq0\quad\text{in }\;\R\,,$$ at least if $\eps$ is small enough.

    We consider now a large integer $n$ to be chosen later. By using an approximation of $w_\eta^{2n+1}$ by test functions $\varphi_k\in C^1_0(\R)$ ($k\in \N$), we can pass to the limit as $k \to +\infty$
    in the weak formulation and get
    $$\eps(2n+1)\int_\R |w_\eta'|^2w_\eta^{2n}+\lambda\int_\R w_\eta^{2n+1}w_\eta-C\int_\R |w_\eta'|w_\eta^{2n+1}\leq0 \,.$$
    Using that
    $$|w_\eta'|w_\eta^{2n+1}=\Big(|w_\eta'|w_\eta^{n}\Big)\cdot\Big(w_\eta^{n+1}\Big)\leq
    \kappa \Big(|w_\eta'|^2w_\eta^{2n}\Big)+\frac{1}{\kappa}\Big(w_\eta^{2n+2}\Big)\,,$$
    we choose $\kappa=2C/\lambda$ and obtain
    $$\Big(\eps(2n+1)-\frac{2C}{\lambda}\Big)\int_\R |w_\eta'|^2w_\eta^{2n}
    +\frac{\lambda}{2}\int_\R w_\eta^{2n+1}w_\eta\leq0\,.
    $$
    Hence if choose $n$ large enough, we get $w_\eta\equiv 0$. Passing to the limit as $\eta\to0$ we find that $w\leq0$,
    which means $\VFp\leq u_\eps$ in $\R$.

    Finally we pass to the limit as $\eps\to0$ by using the half-relaxed limit method: if
$$
\overline u(x):={\limsup}^* u_\eps (x)=\limsup_{
\scriptsize
\begin{array}{l}
y  \to x\\
\eps \to 0
\end{array}}
u_\eps(y) \quad \hbox{and}\quad \underline u(x):={\liminf}^* u_\eps (x)=\liminf_{
\scriptsize
\begin{array}{l}
y  \to x\\
\eps \to 0
\end{array}}
u_\eps(y)\,,
$$
then $\overline u$ is a subsolution of (\ref{Bellman-Om})-(\ref{Bellman-H-sub})-(\ref{Bellman-H-sup}) and therefore, by Corollary~\ref{sous-max-sur-min}, $\overline u \leq \VFp$ in $\R$. But, on the other hand, $\VFp\leq u_\eps$ in $\R$ and this gives $\VFp\leq  \underline u \leq \overline u $ in $\R$. Therefore  $\VFp =  \underline u = \overline u $ in $\R$ which implies the uniform convergence of $u_\eps$ to $\VFp $.
\end{proof}

We notice that the same result holds under the assumptions of Subsection \ref{subsec:push}, but of course in this
case, $\VFm\equiv\VFp$.

\subsection{A conjecture }

Another approximation can be used through a combination between the ``vanishing viscosity method'' and the  Filippov's method
to obtain the approximate problem
\begin{equation}\label{eqn:vpeandVS}
-\delta_\e \Delta \ue + \vpe (x_N) H_1(x,\ue, D\ue)+ (1-\vpe (x_N)) H_2(x,\ue, D\ue)=0\quad \hbox{in  }\R^N\; ,
\end{equation}
where $\delta_\e$ is a parameter devoted to tend to zero. Of course, if $\delta_\e \ll \e$,
the expected behavior of $\ue$ is as in the  Filippov case, i.e. a convergence to $\VFm$ and we think
that if $\delta_\e \gg \e$, then $\ue$ converges to $\VFp$ as in the viscous approximation.

\

\

{\bf Acknowledgments.} We are grateful to P. Cardaliaguet for very fruitful  discussions on the definition of trajectories for the optimal control problem,  and to N. Tchou for interesting remarks on the first version of the manuscript, which led to valuable improvements of the paper.


\thebibliography{}

\bibitem{AF} J-P. Aubin and H. Frankowska,
Set-valued analysis. Systems \& Control: Foundations \& Applications, 2. Birkhuser Boston, Inc., Boston, MA, 1990.

\bibitem{ACCT} Y. Achdou, F. Camilli, A. Cutri, and N. Tchou, Hamilton-Jacobi  equations constrained   on networks, NDEA Nonlinear Differential Equation and Application,  to apper.

\bibitem{AMV}
Adimurthi, S. Mishra and G. D. Veerappa Gowda,
Explicit Hopf-Lax type formulas for Hamilton-Jacobi equations and conservation laws with discontinuous coefficients. 
J. Differential Equations 241 (2007), no. 1, 1-31.

\bibitem{BCD} M. Bardi and  I. Capuzzo Dolcetta, {\it Optimal control and viscosity solutions of Hamilton-Jacobi- Bellman equations}, Systems \& Control: Foundations \& Applications, Birkhauser Boston Inc., Boston, MA, 1997.

\bibitem{Ba} G. Barles,  {\it Solutions de viscosit\'e des  \'equations de Hamilton-Jacobi}, Springer-Verlag, Paris, 1994.

\bibitem{BJ:Rate}
G.~Barles and E.~R.~Jakobsen.
\newblock On the convergence rate of approximation schemes for
Hamilton-Jacobi-Bellman equations.
\newblock {\em M2AN Math. Model. Numer. Anal.} 36(1):33--54, 2002.

\bibitem{BP2}
G. Barles and B. Perthame: {\sl Exit time problems in optimal
control and vanishing viscosity method.} SIAM J. in Control and
Optimisation, 26, 1988, pp. 1133-1148.

\bibitem{BP3}
G. Barles and B. Perthame: Comparison principle for Dirichlet
type Hamilton-Jacobi Equations and singular perturbations of degenerated
elliptic equations. Appl. Math. and Opt., {\bf 21}, 1990, pp~21-44.

\bibitem{Bl1} A-P. Blanc,
Deterministic exit time control problems with discontinuous exit costs. SIAM J. Control Optim. 35 (1997), no. 2, 399--434.

\bibitem{Bl2} A-P. Blanc,
Comparison principle for the Cauchy problem for Hamilton-Jacobi equations with discontinuous data. Nonlinear Anal. 45 (2001), no. 8, Ser. A: Theory Methods, 1015--1037.

\bibitem{BrYu} A. Bressan and Y. Hong,  Optimal control problems on stratified domains, Netw. Heterog. Media 2 (2007), no. 2, 313-331 (electronic).

\bibitem{CaSo} F Camilli and A. Siconolfi, {Time-dependent measurable Hamilton-Jacobi equations}, Comm. in Par. Diff. Eq. 30 (2005), 813-847.

\bibitem{CR} G. Coclite and N. Risebro, Viscosity solutions of Hamilton-Jacobi equations with discontinuous coefficients.
J. Hyperbolic Differ. Equ. 4 (2007), no. 4, 771--795.

\bibitem{DeZS} C. De Zan and P. Soravia, Cauchy problems for noncoercive Hamilton-Jacobi-Isaacs equations with discontinuous coefficients.
Interfaces Free Bound. 12 (2010), no. 3, 347--368.

\bibitem{DE} K. Deckelnick and C. Elliott,
Uniqueness and error analysis for Hamilton-Jacobi equations with discontinuities.
Interfaces Free Bound. 6 (2004), no. 3, 329--349.

\bibitem{Du} P. Dupuis, A numerical method for a calculus of variations problem with discontinuous integrand. Applied stochastic analysis (New Brunswick, NJ, 1991), 90--107, Lecture Notes in Control and Inform. Sci., 177, Springer, Berlin, 1992.

\bibitem{Fi} A.F. Filippov,  Differential equations with discontinuous right-hand side. Matematicheskii Sbornik,  51  (1960), pp. 99--128.  American Mathematical Society Translations,  Vol. 42  (1964), pp. 199--231 English translation Series 2.

\bibitem{GS1}
M. Garavello and P. Soravia, Optimality principles and uniqueness for Bellman equations of unbounded control problems with discontinuous running cost. NoDEA Nonlinear Differential Equations Appl. 11 (2004), no. 3, 271-298.

\bibitem{GS2}
M. Garavello and P. Soravia,
Representation formulas for solutions of the HJI equations with discontinuous coefficients and existence of value in differential games.
J. Optim. Theory Appl. 130 (2006), no. 2, 209-229.

\bibitem{GGR}
Y. Giga, P. G\`orka and P. Rybka, A comparison principle for Hamilton-Jacobi equations with discontinuous Hamiltonians. Proc. Amer. Math. Soc. 139 (2011), no. 5, 1777-1785.

 \bibitem{gt}
 D. Gilbarg and N.S. Trudinger: {\sc Elliptic Partial Differential
 Equations of Second-Order.} Springer, New-York, (1983).

\bibitem{L}
Lions P.L.  {\it Generalized Solutions of Hamilton-Jacobi Equations},
Research Notes in Mathematics 69, Pitman, Boston, 1982.

\bibitem{R} R.T. Rockafellar,
{\it Convex analysis},
Princeton Mathematical Series, No. 28 Princeton University Press, Princeton, N.J. 1970.

\bibitem{IMZ} C. Imbert, R. Monneau and H. Zidani, A Hamilton-Jacobi approach to junction problems and applications to traffic flows, ESAIM COCV, to appear.

\bibitem{Son} H.M. Soner, Optimal control with state-space constraint,  I, SIAM J. Control Optim. 24 (1986), no. 3, 552-561.

\bibitem{ScCa} D. Schieborn and F.Camilli : Viscosity solutions of Eikonal equations on topological networks, to appear in Calc. Var. Partial Differential Equations.

\bibitem{So} P. Soravia,  Degenerate eikonal equations with discontinuous refraction index, ESAIM Control Optim. Calc. Var. 12 (2006).

\bibitem{Wa} T. Wasewski,  Syst\`emes de commande et \'equation au contingent, Bull. Acad. Pol. Sc., 9, 151-155, 1961.

\end{document}